\crefname{proposition}{\textup{Proposition}}{\textup{Assumptions}}
\crefname{assumption}{\textup{Assumption}}{\textup{Assumptions}}
\crefname{lemma}{\textup{Lemma}}{\textup{Lemmas}}
\crefname{algorithm}{\textup{Algorithm}}{\textup{Algorithms}}
\crefname{theorem}{\textup{Theorem}}{\textup{Theorems}}
\crefname{remark}{\textup{Remark}}{\textup{Remarks}}
\crefname{example}{\textup{Example}}{\textup{Examples}}
\crefname{corollary}{\textup{Corollary}}{\textup{Corollaries}}
\crefname{subsection}{\textup{Section}}{\textup{Subsections}}
\crefname{section}{\textup{Section}}{\textup{Sections}}
\let\@citexOld\@citex
\def\@citex[#1]#2{\textup{\@citexOld[#1]{#2}}}
\title{Adjoint-based calibration of nonlinear stochastic differential equations}
\author{
    Jan Bartsch\thanks{Department of Mathematics and Statistics, University of Konstanz, Germany (\email{\{jan.bartsch, robert.denk, stefan.volkwein\}@uni-konstanz.de}).}
    \and Robert Denk\footnotemark[1]
    \and Stefan Volkwein\footnotemark[1]
}
\newcommand{\ua}{{u_{\mathsf a}}}
\newcommand{\ub}{{u_{\mathsf b}}}
\newcommand{\X}{{\mathscr X}}
\newcommand{\U}{{\mathscr U}}
\newcommand{\Uad}{{\U_{\mathsf{ad}}}}
\newcommand{\Uadh}{{\U_{\mathsf{ad}}^h}}
\newcommand{\Uc}{{\U^{\mathsf{c}}}}
\newcommand{\UadCalib}{{\U^{\mathsf{c}}_{\mathsf{ad}}}}
\newcommand{\cd}{{c^{\mathsf d}}}
\newcommand{\cdIndex}[1]{{c_{#1}^{\mathsf d}}}
\newcommand{\cdT}{{c^{\mathsf d}_T}}
\newcommand{\sd}{{\sigma^{\mathsf d}}}
\newcommand{\sdT}{{\sigma^{\mathsf d}_T}}
\newcommand{\ed}{{\eta^{\mathsf d}}}
\newcommand{\edT}{{\eta^{\mathsf d}_T}}
\newcommand{\Npart}{{N_{\mathrm{p}}}}
\newcommand{\rmd}{{\, \mathrm{d}}}
\newtheorem{example}[theorem]{Example}
\newtheorem{remark}[theorem]{Remark}
\newtheorem{assumption}[theorem]{Assumption}
\newcommand{\bA}{{\bm A}}
\newcommand{\bB}{{\bm B}}
\newcommand{\bU}{{\bm U}}
\newcommand{\bUt}{{\bm{\tilde U}}}
\newcommand{\bX}{{\bm X}}
\newcommand{\bXt}{{\bm{\tilde X}}}
\newcommand{\bXb}{{\bm{\bar X}}}
\newcommand{\bY}{{\bm Y}}
\newcommand{\bZ}{{\bm Z}}
\newcommand{\bL}{{\bm\Lambda}}
\newcommand{\Cm}{{\mathcal C}}
\newcommand{\Om}{{\mathcal O}}
\newcommand{\Sm}{{\mathcal S}}
\newcommand{\Eb}{{\mathbb E}}
\newcommand{\Lb}{{\mathbb L}}
\newcommand{\Nb}{{\mathbb N}}
\newcommand{\Pb}{{\mathbb P}}
\newcommand{\Rb}{{\mathbb R}}
\newcommand{\Vb}{{\mathbb V}}
\newcommand{\Xb}{{\mathbb X}}
\newcommand{\Fs}{{\mathscr F}}
\newcommand{\Ms}{{\mathscr M}}
\newcommand{\Ns}{{\mathscr N}}
\newcommand{\Fr}{{\mathrm F}}
\newcommand{\standardNorm}[1]{\left\lVert #1 \right\rVert}
\def\LF{\mathcal{L}}
\begin{document}

\maketitle

\begin{abstract}
	To study the nonlinear properties of complex natural phenomena, the evolution of the quantity of interest can be often represented by systems of coupled nonlinear stochastic differential equations (SDEs). These SDEs typically contain several parameters which have to be chosen carefully to match the experimental data and to validate the effectiveness of the model. In the present paper the calibration of these parameters is described by nonlinear SDE-constrained optimization problems. In the optimize-before-discretize setting a rigorous analysis is carried out to ensure the existence of optimal solutions and to derive necessary first-order optimality conditions. For the numerical solution a Monte-Carlo method is applied using parallelization strategies to compensate for the high computational time. In the numerical examples an Ornstein-Uhlenbeck and a stochastic Prandtl-Tomlinson bath model are considered. 
\end{abstract}

\begin{keywords}
    Optimization of SDEs, first-order optimality conditions, Monte Carlo methods,
    stochastic gradient methods, Ornstein-Uhlenbeck model, stochastic Prandtl-Tomlinson equations.
\end{keywords}

\begin{AMS}
  49J55,   	
  49K45,   	
  65C05,   	
  90C52   	
  93E20,   	
\end{AMS}


\section{Introduction}

Natural processes inherit noise and uncertainties and therefore are often modeled using stochastic differential equations (SDEs). Such equations consist of a deterministic part and a stochastic part, where the latter one is usually modeled by Brownian motions; see, e.g., \cite{Kampen1976SDE,Kampen1981StochasticProcesses}. The application of SDEs ranges from physics and biology to finance; cf. \cite{Cresson2018DerivationSDEBiology, Jagla2018SPT, SachsSchu2013GradienComputationModelCalibration}, for instance. All these SDEs include parameters that need to be calibrated. For this calibration (also called parameter identification), usually real-world data is used and parameters are searched that lead to the best agreement of measured and predicted data. Therefore, parameter identification and optimal control of such processes are of interest to many communities and have been the subject of extensive research. 

The main motivation for our work is the calibration of models that investigate the rheological properties of fluids using the microrheology technique. Specifically, the study of the behavior of fluids using a so-called tracer particle suspended in a fluid.
Key tools to study properties of complex fluids are nonlinear bath models, for example, the Stochastic Prandtl-Tomlinson (SPT) model \cite{Jain2021microStochPrandtlTomlison,Jain2021twoStepRheolog,Muller2020NonLinBath}.
While linear models have been investigated in the Markovian framework \cite{Dhont1996introductionColloids, Sekimoto1998MarkovianLangevin,TanimuraYoshitakaWolynes1991QuantumClassicalFPMarkov}, less is known for the nonlinear case together with memory effects in which the Markovian framework is no longer applicable.
To study the properties of fluids, systems of coupled nonlinear SDEs representing the movement of particles are studied. 
The SDEs in this case typically possess the structure of (generalized) nonlinear Langevin equations \cite{Kawasaki1973DerivationGLE, Kubo1966FluctuationDissipation, Zwanzig1973NonlinearGeneralizedLangevin}.

In this paper, we express the calibration problems as SDE-constrained optimization problems of the form
\begin{align}
	\label{Intro:P}
	\left\{
	\begin{aligned}
		&\min J(\bX,u)=j(\bX)+j_T(\bX(T))+\frac{\kappa}{2}\,{\|u\|}_\U^2\\
		&\hspace{1mm}\text{subject to }\bX\text{ satisfies an SDE on }[0,T]\text{ for parameter or control }u\in\Uad\subset\U
	\end{aligned}
	\right.
\end{align}
with $\kappa\ge0$, a parameter or control space $\U$ and with properly chosen functionals $j$, $j_T$ which will be specified in \cref{sec:stateEquation,sec:Optimization_problem}, respectively. In particular, the tracking type functional
\begin{align}
	\label{Intro:CostCal} 
	\begin{aligned}
		J(\bX,u)&=\frac{1}{2}\bigg(\int_0^T{\|\Cm(\Eb[\bX(t)])-\cd(t)\|}_{\Rb^{\ell}}^2\,\mathrm dt+{\|\Cm(\Eb[\bX(T)])-\cdT\|}_{\Rb^{\ell}}^2+\kappa\,{\|u\|}_\U^2\bigg)
	\end{aligned}
\end{align}
is included. Notice that the objective defined in \eqref{Intro:CostCal} differs from the one typically used in the stochastic optimal control setting (see, e.g., {\em\cite{Fabbri2017StochasticOptimalControlInInfi,Pham2009StochasticModellingOptimization}}), but is present in literature for calibration problems; see, e.g., {\em\cite{FabrizioMonetti2015MethodologiesCalibration,Estevao2000FunctionalCalibration,Gilli2011calibratingPricingModels,Kaebe2009AdjointMonteCarloCalibra, LoHaslam2008algorithmParamEstSDE}}).

To solve \eqref{Intro:P} let us mention three different approaches:
\begin{itemize}
	\item One can lift the problem to the level of partial differential equations (PDEs). 
    This lift can be viewed as taking the limit in several senses; see, e.g., \cite{PaulTrelat2024microscopicMacroMeso} for a review in the deterministic setting.
    One way is to define a probability density function that contains information on the probability of the state of the model being in a certain configuration at a certain timestep. The PDE that governs the evolution of the probability density is given by the Fokker-Planck equation \cite{BreitenbachBorzi2020PMPFPOCP, AnnunziatoBorzi2018FPFrameworkSOC}. 
    In this setting, one can apply tools from PDE-constrained optimization; see, e.g., \cite{HPUU08,Treoltzsch2010OCPPDE}.
    If one wants to account for the structure of the space of probability measures,
		we refer to \cite{BensoussanFrehseYam2013MeanFieldGamesControl, BonnetFrankowska2021NecessaryOCPWasserstein, Daudin2023OCPStateConstraintsFP,Frankowska2019NecessaryConditionsSDEStateConstraint}. 
    However, one encounters certain challenges with this lift to the level of PDEs. 
    One of them is the exponential growth of numerical complexity with the dimension of the model since we encounter a high-dimensional PDE.
    This is also known as the \emph{curse of dimensionality}.
	\item In another approach one stays on the microscopic level and directly characterizes the optimal control or calibration using SDEs \cite{Bismut1973ConjugateConvexOSC,Bismut1978IntroOCPStoch, LuZhang2014GeneralPMPBSEE}. Here, no probability density functions come into play and hence no PDEs have to be solved. However, their huge weakness is their quite slow convergence \cite{GrahamTalay2013StochSimMC}.
	\item Further, hybrid methods are utilized, where the models are solved on the microscopic level, but -- to calculate the gradient -- the density functions are assembled. These methods suffer from the time-consuming bottleneck of assembling probability density functions in hybrid methods  \cite{Bartsch2021MOCOKI, Bartsch2020OCPKS}.
\end{itemize}

In the present paper we study \eqref{Intro:P} as an infinite dimensional optimization problem; see, e.g., \cite{HPUU08,Lue69,Treoltzsch2010OCPPDE}. 
The first objective is to derive sufficient conditions for the existence of optimal solutions and to carefully set up the correct framework in terms of solution spaces and control or parameter spaces. 
After we guarantee sufficient differentiability of the cost functional and the constraints, we derive first-order necessary optimality conditions by an adjoint calculus. 
This allows us to characterize an optimal control or optimal parameters; see, e.g., \cite{HPUU08}. 
Up to the authors' knowledge, the rigorous derivation of the reduced gradient for the calibration problem using adjoint calculus is not present in the literature. 
Nevertheless, there are some works heading in a similar direction, in particular, \cite{KosmolPavon1993LagrangeOCP} and very recently \cite{ClevenhausTotzeckEhrhardt2023GradientCalibrationHeston}.

To solve \eqref{Intro:P} numerically, one possibility is to start with deriving first-order necessary optimality conditions for the optimal control problem on the continuous level. After this, the system is discretized and optimization schemes are applied. This approach is also known as \emph{first-optimize-then-discretize}. 
Alternatively, in the \emph{first-discretize-then-optimize} approach \eqref{Intro:P} is discretized first and then the first-order optimality system is derived for the discretized optimization problem. 
Utilizing the Lagrangian framework, we derive an optimality system consisting of the state equation, the adjoint equation and the optimality condition. Both approaches are compared. Moreover, convergence of the value of the discretized cost functional to the value of the continuous one is proved. 
Notice that the exploiting of adjoint-based models and staying on the microscopic level by using Monte-Carlo computations have several advantages, in particular, they are easy to implement and inherit huge flexibility \cite{GrahamTalay2013StochSimMC}.

The paper is structured as follows: In \cref{Sec:Notations} we introduce the notation and recall some preliminaries when dealing with SDEs. In \cref{sec:stateEquation}, a general state equation is introduced that governs the optimization problem that is defined in \cref{sec:Optimization_problem}, where we prove the existence of solutions to the optimization problem. In \cref{sec:Discretization} we introduce a discretization for \eqref{Intro:P} and derive the associated first-order necessary optimality system which is the basis for a gradient-based method utilized for our numerical tests. The Fr\'echet differentiability of the constraints and the objective as well as the first-order optimality system for \eqref{Intro:P} are proved in \cref{sec:Gradient}. The work is completed in \cref{sec:Numerical_experiments}, where we formulate our optimization strategy and validate it using two different examples. At first, we apply our method to the mean-reverting Ornstein-Uhlenbeck process where we try to find a time-dependent control function that drives the evolution of the SDE to follow a desired trajectory.
Then, we solve the calibration model governed by the SPT model with one and two particles.


\section{Notation and preliminaries}
\label{Sec:Notations}

In this section, we introduce our setting and a general model for calibration and stochastic control problems. 
We start by recalling some preliminaries. 
For more details, we refer the reader to \cite{Mao2008SDE, YongZhou1999StochasticControls}.

Let $(\Omega,\Fs,\Pb)$ be a probability space with a non-empty event set $\Omega$, a $\sigma$-algebra $\Fs$ and a probability measure $\Pb:\Fs\to[0,1]$.
For given $d\in\Nb$ let $\bX=(X_1,\ldots,X_d):\Omega\to\Rb^d$ be $\Fs$-measurable. 
Then, $\bX$ is called a \emph{$d$-dimensional random variable} and its \emph{expected value} is defined as
\begin{align*}
	\Eb[\bX]\colonequals\int_\Omega\bX\,\mathrm d\Pb=\int_\Omega\bX(\omega)\,\mathrm d\Pb(\omega) \in \Rb^d.
\end{align*}
A $d$-dimensional \emph{stochastic process} is a family $\{\bX(t)\}_{t\ge 0}$ of $d$-dimensional random variables $\bX(t)\colon \Omega\to \Rb^d$.
A \emph{filtration} of the probability space $(\Omega,\Fs,\Pb)$ is a family $\{\Fs(t)\}_{t\ge 0}$ of $\sigma$-algebras $\Fs(t)\subset\Fs$ with $\Fs(s)\subset \Fs(t)$ for all $s\in[0,t]$.
A $d$-dimensional stochastic process  $\{\bX(t)\}_{t\ge0}$ is called
\emph{adapted} with respect to the filtration $\{\Fs(t)\}_{t\ge0}$ if each $\bX(t)$ is $\Fs(t)$-measurable.
Furthermore, it is called \emph{progressively measurable} if for every $t\ge0$ the function $\bX\vert_{[0,t]}:\Omega \times [0,t]\to\Rb^d,\, (\omega, t)\mapsto \bX(\omega, t)$, is $\Fs(t)\otimes{\mathscr B}([0,t])$-measurable, where $\mathscr B([0,t])$ is the $\sigma$-algebra of all Borel subsets of $[0,t]$ and $\otimes$ denotes the product-$\sigma$-algebra.
The stochastic It\^o-integral will be considered in the $L^2$-framework; cf., e.g., \cite[Section~4.2]{Evans2013SDE}. For $T>0$ we endow $L^2(0,T; \Rb^d)$ with the standard norm
\begin{align*}
	{\|\varphi\|}_{L^2(0,T;\Rb^d)} \colonequals  \bigg( \int_0^T{\|\varphi(t)\|}_{\Rb^d}^2 \,\mathrm dt\bigg)^{1/2},
\end{align*}
with the Euclidean norm $\|\cdot\|_{\Rb^d}$ in $\Rb^d$. In the same way, the space $L^2(\Omega;\Rb^d)$ of all equivalence classes of square-integrable random variables is endowed with the norm
\begin{align*}
	{\|\bX_\circ\|}_{L^2(\Omega;\Rb^d)}\colonequals\left(
	\Eb\left[{\|\bX_\circ\|}_{\Rb^d}^2\right]\right)^{1/2}=\left( \int_{\Omega}{\|\bX_\circ(\omega)\|}_{\Rb^d}^2 \,\mathrm d\Pb(\omega)\right)^{1/2}.
\end{align*}
Finally, $L^2(\Omega\times (0,T);\Rb^d)\cong L^2(\Omega; L^2(0,T;\Rb^d))$ denotes the space of  equivalence classes of $\Rb^d$-valued $\Fs\otimes{\mathscr B}([0,T])$-measurable and square-integrable processes $\bX$, with the norm 
\begin{align*}
	{\|\bX\|}_{L^2(\Omega\times (0,T);\Rb^d)}
	\colonequals\bigg(\Eb\bigg[\int_0^T{\|\bX(t)\|}_{\Rb^d}^2\,\mathrm dt\bigg]\bigg)^{1/2}.
\end{align*}
We define 
\begin{align*}
	\Lb^2_{\Fs}(\Rb^d)\colonequals\{ \bX\in L^2(\Omega\times (0,T);\Rb^d): \bX\text{ 
		progressively measurable}\}.
\end{align*}
More precisely, $\Lb^2_{\Fs}(\Rb^d)$ consists of all equivalence classes in 
$L^2(\Omega\times(0,T);\Rb^d)$ which contain at least one progressively measurable 
process. The space $\Lb^2_{\Fs}(\Rb^d)$ will be considered as a subspace of $L^2(\Omega\times(0,T);\Rb^d)$ supplied by the topology of $L^2(\Omega\times(0,T);\Rb^d)$, i.e., we set
\begin{align*}
	{\|\bX\|}_{\Lb^2_{\Fs}(\Rb^d)}\colonequals {\|\bX\|}_{L^2(\Omega\times (0,T);\Rb^d)}.
\end{align*}
In addition to the above Hilbert spaces, we will also consider processes with continuous paths, endowed with the $\sup$-norm. For this, we define the norm
\begin{align*}
	{\|\bX\|}_{C([0,T]; L^2(\Omega;\Rb^d))} := \sup_{t\in [0,T]} {\|\bX(t)\|}_{L^2(\Omega;\Rb^d)}= \sup_{t\in [0,T]}\left(\Eb\big[{\|\bX(t)\|}^2_{\Rb^d}\big]\right)^{1/2}
\end{align*}
on the Banach space of all processes $\bX$ which are continuous functions of $t$ with values in the space $L^2(\Omega;\Rb^d)$.

Let now $m\in\Nb$ be given.
By $\{\bB(t)=(B_i(t))_{1\le i\le m}\,\vert\,t\ge0\}$ we denote an $m$-dimensional Brownian motion (or Wiener process); see, e.g., \cite[Definition 4.1, Chapter 1]{Mao2008SDE}.
We introduce the family of $\sigma$-algebras $\{\Fs^\bB(t)\}_{t\ge0}$, where $\Fs^\bB(t)$ is generated by $\{\bm B(s)\,\vert\,0\le s\le t\}$ for any $t\ge 0$. Recall that $\{\Fs^\bB(t)\}_{t\ge0}$ is said to be the \emph{natural filtration} generated by the Brownian motion $\{\bB(t)\}_{t\ge0}$. By $\{\Fs(t)\}_{t\ge0}$ we introduce the filtration given by $\Fs(t)=\Fs^\bB(t)\cup\Ns$, where the set $\Ns$ is the set of all $\Pb$-null sets.

We fix a sub-$\sigma$-algebra $\Fs_\circ\subset\Fs$ which is independent of $\Fs(T)$, and define the extended filtration $\Fs_\circ(t)\colonequals\sigma(\Fs(t) \cup \Fs_\circ )$. Let $\X$ denote the vector space  of all $\{\Fs_\circ(t)\}_{t \in [0,T]}$-adapted  measurable processes of the form
\begin{align}
	\label{Yprocess}
	\bY(t)=\Phi(\bY_\circ,\mathtt a,\mathtt b)\colonequals\bY_\circ+\int_0^t \mathtt a(s)\,\mathrm ds+\int_0^t \mathtt b(s)\,\mathrm d\bB(s)\quad\text{for every }t\in[0,T]
\end{align}
satisfying 
\begin{align}
	{\|\bY\|}_\X\colonequals \left(
	\Eb\left[{\|\bY_\circ\|}_{\Rb^d}^2\right]
	+\Eb\left[\int_0^T{\|\mathtt a(s)\|}_{\Rb^d}^2+{\|\mathtt b(s)\|}_\Fr^2\,\mathrm ds\right]\right)^{1/2}<\infty
	\label{eq:definition_norm}
\end{align}
with $\bY_\circ\in L^2(\Omega;\Rb^d)$ being $\Fs_\circ$-measurable, $\mathtt a\in\Lb^2_{\Fs}(\Rb^d)$ and $\mathtt b=[\mathtt b_1\vert\ldots\vert\mathtt b_m]\in \Lb^2_{\Fs}(\Rb^{d\times m})$. In \eqref{eq:definition_norm}, we denote by $\|\cdot\|_\Fr$ the Frobenius (matrix) norm induced by the inner product
\begin{align}
	{\langle \mathfrak b,\tilde{\mathfrak b} \rangle}_\Fr
	= \sum_{j=1}^m \mathfrak b_j^\top \tilde{\mathfrak b}_j 
	= \sum_{i=1}^d\sum_{j = 1}^m\mathfrak b_{ij}\tilde{\mathfrak b}_{ij}\quad\text{for }\mathfrak b=[\mathfrak b_1\vert\ldots\vert\mathfrak b_m],\tilde{\mathfrak b}=[\tilde{\mathfrak b}_1\vert\ldots\vert\tilde{\mathfrak b}_m]\in\Rb^{d\times m}.
	\label{eq:definition_frobenius_scalarproduct}
\end{align}
Then the stochastic process $\bY$ in \eqref{Yprocess} is adapted  with respect to the filtration $\{\Fs_\circ(t)\}_{t \in [0,T]}$, and for almost all $\omega\in\Omega$ the associated path $[0,T]\ni t\mapsto \bY(\omega,t)\in\Rb^d$ is continuous. 
In particular, $\bY$ is progressively measurable; cf. \cite[Proposition~1.13]{Karatzas1988BMSDE}. 
We consider on the space $\X$ the inner product
\begin{align}
	{\langle\bY,\bZ\rangle}_\X = \Eb\left[\bY_\circ^\top\bZ_\circ\right]
	+\Eb\bigg[
	\int_0^T\mathtt a(s)^\top\tilde{\mathtt a}(s)\,\mathrm ds+ \int_0^T{\langle\mathtt b(s), \tilde{\mathtt b}(s)\rangle}_\Fr\, \mathrm d s\bigg],
	\label{eq:inner_product}
\end{align}
where $\bZ := \Phi(\bZ_\circ,  \tilde{\mathtt a}, \tilde{\mathtt b} )$ with $\bZ_\circ\in L^2(\Omega;\Rb^d)$ being $\Fs_\circ$-measurable and with processes $\tilde{\mathtt a}\in\Lb_\Fs^2(\mathbb R^d)$, $\tilde{\mathtt b}=[\tilde{\mathtt b}_1\vert\ldots\vert\tilde{\mathtt b}_m]\in\Lb_\Fs^2(\Rb^{d\times m})$. 
Throughout this work, the symbol `$\top$' stands for the transpose of vectors or matrices. 

To show that $\X$ is a Hilbert space, we consider the product space
\begin{equation}\label{eq-defXb}
	\Xb=\big\{ (\bY_\circ,\mathtt a,\mathtt b)\in L^2(\Omega;\Rb^d)\times\Lb^2_\mathscr F(\Rb^d)\times\Lb^2_\mathscr F(\Rb^{d\times m}): \bY_\circ 
	\text{ is $\Fs_0$-measurable}\big\} .
\end{equation}
In \cref{Lem:IsoX} we show that the space $\X$ is a Hilbert space with the inner product given by the right-hand side of \eqref{eq:inner_product}, and the induced norm is given by the right-hand side of \eqref{eq:definition_norm}. In the proof of the lemma we utilize the notion of a martingale which is defined, e.g., in \cite[Section~2.7]{Evans2013SDE}.

\begin{lemma}
	\label{Lem:IsoX}
	The map $\Phi\colon \Xb\to \X$ is injective, i.e., the representation of $\bY\in\X$ of the form \eqref{Yprocess} is unique, and therefore the norm \eqref{eq:definition_norm} is well defined in  $\X$. With this norm in $\X$, $\Phi$ is an isometric isomorphism between $\Xb$ and $\X$. In particular, $\X$ is a Hilbert space. 
\end{lemma}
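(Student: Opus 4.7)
The plan is to split the claim into three steps: (i) establish injectivity of $\Phi$, which is what guarantees that the triple $(\bY_\circ,\mathtt a,\mathtt b)$ representing $\bY\in\X$ is unique and hence that \eqref{eq:definition_norm} and the inner product \eqref{eq:inner_product} are well defined on $\X$; (ii) observe that linearity, surjectivity and the isometry property of $\Phi$ are essentially built into the definition of $\X$ and its norm; (iii) verify that $\Xb$ is a Hilbert space, so that the Hilbert space structure transfers to $\X$ via $\Phi$.

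For (i), by linearity it suffices to assume $\Phi(\bY_\circ,\mathtt a,\mathtt b) = 0$ as a process and deduce that each entry vanishes. Evaluating the a.s.\ continuous path at $t=0$ yields $\bY_\circ = 0$ $\Pb$-a.s., so
\[
M(t) := \int_0^t \mathtt b(s)\,\mathrm d\bB(s) = -\int_0^t \mathtt a(s)\,\mathrm ds\qquad \text{for all }t\in[0,T],\ \Pb\text{-a.s.}
\]
Because $\Fs_\circ$ is independent of $\Fs(T)$, the Brownian motion $\bB$ remains a Brownian motion with respect to the enlarged filtration $\{\Fs_\circ(t)\}$, so $M$ is a continuous $\{\Fs_\circ(t)\}$-martingale. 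The right-hand side is continuous, of bounded variation, and starts at $0$, so the classical result that a continuous martingale of bounded variation starting at zero must vanish identically forces $M\equiv 0$. Itô's isometry then yields $\Eb\int_0^T \|\mathtt b(s)\|_\Fr^2\,\mathrm ds = 0$, i.e.\ $\mathtt b = 0$ in $\Lb^2_\Fs(\Rb^{d\times m})$, and Lebesgue differentiation applied pathwise to $\int_0^t \mathtt a(s)\,\mathrm ds\equiv 0$ gives $\mathtt a = 0$ in $\Lb^2_\Fs(\Rb^d)$. I expect this step to be the main obstacle, with the chief subtlety lying in transferring the martingale property of $M$ to the enlarged filtration $\{\Fs_\circ(t)\}$.

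Granting injectivity, linearity and surjectivity of $\Phi$ are immediate from \eqref{Yprocess}, while comparison of \eqref{eq:definition_norm} and \eqref{eq:inner_product} with the natural product inner product on $\Xb$ makes $\Phi$ an isometric linear bijection by construction. It remains to show $\Xb$ is complete. The ambient product $L^2(\Omega;\Rb^d)\times L^2(\Omega\times(0,T);\Rb^d)\times L^2(\Omega\times(0,T);\Rb^{d\times m})$ is a Hilbert space, and $\Xb$ is the subset cut out by the constraints that $\bY_\circ$ be $\Fs_\circ$-measurable and that $\mathtt a,\mathtt b$ be progressively measurable. Both constraints are preserved under $L^2$-convergence: from any $L^2$-convergent sequence one extracts an a.s.\ convergent subsequence whose pointwise limit inherits the corresponding measurability (up to null sets) and agrees with the $L^2$-limit. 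Hence $\Xb$ is a closed subspace of a Hilbert space and therefore itself Hilbert, and $\Phi$ transfers this structure to $\X$.
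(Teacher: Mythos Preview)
Your proof is correct and follows essentially the same route as the paper: reduce injectivity to the fact that a continuous martingale of finite variation is constant, deduce $\mathtt b=0$ and then $\mathtt a=0$, and conclude that $\Phi$ is an isometric bijection onto $\X$. Your additions---the explicit check that $\Xb$ is closed in the ambient $L^2$ product and the remark on the enlarged filtration---are sound but not needed for the paper's version, which simply takes the completeness of $\Xb$ for granted and works with the natural filtration $\{\Fs(t)\}$ (since $\mathtt a,\mathtt b\in\Lb^2_\Fs$ are already $\{\Fs(t)\}$-progressive, the martingale property of $M$ holds there without any enlargement argument).
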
 

\begin{proof}
	Assume $\bY = \Phi(\bY_\circ,\mathrm a,\mathrm b)=0$. 
	Then $\bY_\circ = \bY(0) =0$ (note that $\bY$ has continuous paths), and  
	\begin{align*}
		M_t\colonequals\int_0^t \mathtt b(s)\,\mathrm d\bB(s) = - \int_0^t \mathtt a(s)\,\mathrm ds,\quad t\in[0,T],
	\end{align*}
	is a continuous martingale with finite variation and therefore constant, see \cite[Theorem~III.12]{Protter05}, and we obtain $M_t=M_0=0$ for all $t\in [0,T]$. As the integral representation of square integrable martingales with continuous paths is unique (see \cite[Theorem~4.15]{Karatzas1988BMSDE}), this yields $\mathtt b=0$ in $\Lb^2_{\Fs}(\Rb^{d\times m})$. Moreover, we have $-M_t = \int_0^t \mathtt a(s)\,\mathrm ds = 0$ for all $t\in[0,T]$ which implies $\mathtt a=0$ in $\Lb^2_{\Fs}(\Rb^{d})$. Consequently,  the map $\Phi$ is injective, which implies that the norm in \eqref{eq:definition_norm} is well defined. 
	By construction, $\Phi$ is an isometric isomorphism between $\Xb$ and $\X$, which also yields that $\X$ is a Hilbert space.
\end{proof}

As the following lemma shows, we obtain continuous embeddings of $\X$ into standard spaces
of processes. Part 2) of the  lemma will be useful later in \cref{Sec:RedGradCont}, when we derive  backward equations. 

\begin{lemma}
	\label{Le:ExtXb}
	\begin{enumerate}
		\item [\em 1)] For $C:= \sqrt{3\max\{T,1\}}$ we have 
		\begin{align*}
			{\|\bY\|}_{C([0,T]; L^2(\Omega;\Rb^d))}\le C {\|\bY\|}_{\X}\quad\text{for every }\bY \in \X.    
		\end{align*}
		In particular, 
		\begin{equation}
			\label{embedding-X}
			\X\subset C([0,T]; L^2(\Omega;\Rb^d))\subset L^2(\Omega\times (0,T); \Rb^d)
		\end{equation}
		holds with continuous embeddings, and we obtain $\X\subset \Lb^2_\Fs(\mathbb R^d)$.
		\item [\em 2)] Let $\bL\colonequals\Phi(\bL_\circ,  \tilde{\mathtt a}, \tilde{\mathtt b} )\in\X$. Then the map
		\begin{equation}\label{def-functional}
			\ell_{\bL}\colon \X\to \Rb,\quad  \bY \mapsto \Eb\bigg[ \bY(T)^\top \bL(T)-\int_0^T \bY(s)^\top\tilde{\mathtt a}(s)\,\mathrm ds\bigg]
		\end{equation}
		defines a continuous linear functional on $\X$. Moreover, for $\bY = \Phi(\bY_\circ,\mathtt a, \mathtt b)\in\X$, we have
		\begin{equation}\label{formula-functional}
			\ell_{\bL}(\bY) = \big\langle (\bY_\circ, \mathtt a,\mathtt b), (\bL_\circ,\bL,\tilde{\mathtt b})\big\rangle_{\Xb}\,, 
		\end{equation}
		where $\langle\cdot\,,\cdot\rangle_{\Xb}$ stands for the inner product in $\Xb$.
	\end{enumerate}
\end{lemma}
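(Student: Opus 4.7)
For part 1), the plan is to decompose $\bY(t) = \bY_\circ + \int_0^t \mathtt a(s)\,\mathrm ds + \int_0^t \mathtt b(s)\,\mathrm d\bB(s)$ and apply the elementary inequality $(x+y+z)^2 \le 3(x^2+y^2+z^2)$ pointwise in $\omega$ to $\|\bY(t)\|_{\Rb^d}^2$. Taking expectations, the initial piece contributes $3\,\Eb[\|\bY_\circ\|_{\Rb^d}^2]$, the Lebesgue integral is bounded by $3T\,\Eb[\int_0^T \|\mathtt a(s)\|_{\Rb^d}^2\,\mathrm ds]$ via Cauchy--Schwarz (using $t\le T$), and the It\^o integral by $3\,\Eb[\int_0^T \|\mathtt b(s)\|_\Fr^2\,\mathrm ds]$ via the It\^o isometry. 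Combining these, the common prefactor $3\max\{T,1\}$ emerges and the asserted bound holds uniformly in $t\in[0,T]$. Continuity of the map $t\mapsto \bY(t)$ into $L^2(\Omega;\Rb^d)$ follows because the Bochner integral is Lipschitz (Cauchy--Schwarz again) and the It\^o integral is $L^2$-continuous in the upper limit via the It\^o isometry. The first embedding in \eqref{embedding-X} is then immediate; the second is the trivial estimate ${\|\cdot\|}_{L^2(\Omega\times(0,T);\Rb^d)} \le \sqrt{T}\,{\|\cdot\|}_{C([0,T];L^2(\Omega;\Rb^d))}$. Finally, $\X\subset\Lb^2_\Fs(\Rb^d)$ holds because every $\bY\in\X$ admits a (continuous, hence) progressively measurable representative with the required $L^2$-integrability.

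For part 2), continuity of $\ell_\bL$ is an immediate consequence of Cauchy--Schwarz combined with part 1): one has $|\Eb[\bY(T)^\top\bL(T)]|\le \|\bY(T)\|_{L^2(\Omega;\Rb^d)}\|\bL(T)\|_{L^2(\Omega;\Rb^d)}$ and $|\Eb[\int_0^T \bY(s)^\top \tilde{\mathtt a}(s)\,\mathrm ds]|\le \|\bY\|_{L^2(\Omega\times(0,T);\Rb^d)}\|\tilde{\mathtt a}\|_{\Lb^2_\Fs(\Rb^d)}$, and each factor is bounded by a constant (depending on $T$ and $\|\bL\|_\X$) times $\|\bY\|_\X$. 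To establish formula \eqref{formula-functional}, the key step is to apply the It\^o product formula to $t\mapsto \bY(t)^\top \bL(t)$, which gives
\begin{align*}
	\bY(T)^\top \bL(T) - \bY_\circ^\top \bL_\circ
	&= \int_0^T \bY(s)^\top \tilde{\mathtt a}(s)\,\mathrm ds + \int_0^T \bL(s)^\top \mathtt a(s)\,\mathrm ds + \int_0^T {\langle \mathtt b(s), \tilde{\mathtt b}(s)\rangle}_\Fr \,\mathrm ds \\
	&\quad + \int_0^T \bY(s)^\top \tilde{\mathtt b}(s)\,\mathrm d\bB(s) + \int_0^T \bL(s)^\top \mathtt b(s)\,\mathrm d\bB(s),
\end{align*}
where the quadratic covariation term is identified via definition \eqref{eq:definition_frobenius_scalarproduct} of the Frobenius inner product. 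Taking expectations and rearranging (moving the $\int_0^T \bY^\top \tilde{\mathtt a}\,\mathrm ds$ term to the left) reproduces exactly the inner product structure of $\Xb$ on the right-hand side.

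The decisive obstacle is justifying that the two stochastic integrals above vanish in expectation, since a priori they are only local martingales: the integrands $\bY^\top\tilde{\mathtt b}$ and $\bL^\top\mathtt b$ need not belong to $\Lb^2_\Fs$ without additional integrability. My plan is a standard localization: introduce the stopping times $\tau_n := \inf\{t\in [0,T] : \|\bY(t)\|_{\Rb^d}\vee\|\bL(t)\|_{\Rb^d}\ge n\}\wedge T$, on which the integrands are uniformly bounded so that the truncated stochastic integrals are genuine $L^2$-martingales with zero mean. Continuity of the paths of $\bY$ and $\bL$ ensures $\tau_n \uparrow T$ almost surely. Doob's maximal inequality applied to the diffusion parts, combined with a Cauchy--Schwarz bound on the drift as in part 1), yields $\Eb[\sup_{t\in[0,T]}\|\bY(t)\|_{\Rb^d}^2] < \infty$ and the analogous bound for $\bL$. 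These sup-integrability estimates supply the $L^1$-dominants needed to invoke the dominated convergence theorem and pass to the limit $n\to\infty$ in the remaining finite-variation integrals, thereby establishing \eqref{formula-functional}.
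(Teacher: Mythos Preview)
Your proof is correct and follows the same strategy as the paper: for part~1) the elementary inequality $(x+y+z)^2\le 3(x^2+y^2+z^2)$ combined with Cauchy--Schwarz on the drift and the It\^o isometry on the diffusion, and for part~2) the It\^o product formula applied to $\bY^\top\bL$ followed by taking expectations. The only substantive difference is that the paper simply asserts that the expectation of the two stochastic integrals vanishes (implicitly deferring to the cited It\^o-lemma reference), whereas you supply a localization argument with stopping times $\tau_n$ and pass to the limit via dominated convergence, using Doob's maximal inequality to produce the $L^1$-dominants $\sup_t\|\bY(t)\|\cdot\sup_t\|\bL(t)\|$. Your extra care is justified: square-integrability of the integrands $\bY^\top\tilde{\mathtt b}$ and $\bL^\top\mathtt b$ is not immediate from the standing hypotheses, so the paper's one-line dismissal hides a genuine (if routine) step that your argument makes explicit.
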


\begin{proof}
	\begin{enumerate}
		\item [1)] Let $\bY\in\X$ be given.  Then \eqref{Yprocess} holds with a random vector $\bX_\circ\in L^2(\Omega;\Rb^d)$ and processes $\mathtt a\in\Lb_\Fs^2(\Rb^d)$ and $\mathtt b\in\Lb_\Fs^2(\Rb^{d\times m})$. From \eqref{Yprocess} and the inequality $(s_1+s_2+s_3)^2\leq 3(s_1^2+s_2^2+s_3^2)$ for positive $s_1,s_2,s_3$, we obtain for every $t\in [0,T]$
		\begin{align*}
			\Eb\left[{\|\bY(t)\|}_{\Rb^d}^2\right] \le 3\Big( \Eb \left[{\|\bY_\circ\|}_{\Rb^d}^2\right]+\Eb\Big[ \Big\|\int_0^t \texttt a(s)\,\mathrm ds\Big\|_{\Rb^d}^2\Big]+ \Eb\Big[ \Big\|\int_0^t \texttt b(s)\,\mathrm d\bB(s)\Big\|_{\Rb^d}^2\Big]\Big).    
		\end{align*}
		An application of the Cauchy-Schwarz inequality yields
		\begin{align*}
			\Big\|\int_0^t \texttt a(s)\,\mathrm ds\Big\|_{\Rb^d}^2 \le \Big( \int_0^T{\|\texttt a(s)\|}_{\Rb^d}\,\mathrm ds\Big)^2\le T\int_0^T{\|\texttt a(s)\|}_{\Rb^d}^2\,\mathrm ds.
		\end{align*}
		For the stochastic integral, we use the It\^o-isometry (cf., e.g., \cite[Chapter~1, Theorem~5.21]{Mao2008SDE})
		\begin{align*}
			\Eb \Big[ \Big\|\int_0^t \texttt b(s)\,\mathrm d\bB(s)\Big\|_{\Rb^d}^2\Big] = \Eb \Big[\int_0^t \|\texttt b(s)\|_\Fr^2\,\mathrm d s\Big]\le \Eb \Big[\int_0^T \|\texttt b(s)\|_\Fr^2\,\mathrm d s\Big].
		\end{align*}
		Therefore, recalling that $\bY$ has continuous paths, we obtain
		\begin{align*}
			{\|\bY\|}_{C([0,T]; L^2(\Omega;\Rb^d))}^2=\sup_{t\in[0,T]}\Eb\big[{\|\bY(t)\|}_{\Rb^d}^2\big] \le 3 \max\{T,1\} \,{\|\bY\|}_{\X}^2    
		\end{align*}
		which shows the first embedding in \eqref{embedding-X}. 
        The second embedding follows from the obvious estimate
		\begin{align*}
			{\|\bX\|}_{L^2(\Omega\times (0,T); \Rb^d)}\le \sqrt{T}\,{\|\bX\|}_{C([0,T]; L^2(\Omega;\Rb^d))}.
		\end{align*}
		As every element of $\X$ is progressively measurable, we obtain $\X\subset \Lb^2_\Fs(\mathbb R^d)$.
		\item [2)] By the Cauchy-Schwarz inequality and the embeddings from 1), we can estimate 
		\begin{align*}
			|\ell_{\bL}(\bY)|&\le {\|\bY\|}_{C([0,T];L^2(\Omega;\Rb^d))}{\|\bL\|}_{C([0,T]; L^2(\Omega;\Rb^d))}+{\|\bY\|}_{\Lb^2_\Fs(\mathbb R^d)}{\|\bL\|}_{\Lb^2_\Fs(\mathbb R^d)}\\
			&\le C'{\|\bY\|}_{\X}{\|\bL\|}_{\X} 
		\end{align*}
		for some constant $C'>0$. Therefore, $\ell_{\bL}$ is a continuous linear functional on $\X$. The proof of \eqref{formula-functional} is based on the It\^o lemma applied on $\bY(T)^\top \bL(T)$, see \cite[Lemma~2.2.16]{Gross2015applications}, which yields
		\begin{align*}
			\bY(T)^\top \bL(T)& = \bY(0)^\top \bL(0) + \int_0^T \Big( \bY(t)^\top \tilde{\mathtt a}(t)
			+ \mathtt a(t)^\top\bL(t)+{\langle\mathtt b(t), \tilde{\mathtt b}(t)\rangle}_\Fr
			\Big) \,\mathrm dt \\
			&\quad+\int_0^T\big(\bY(t)^\top  \tilde{\mathtt b}(t)+ \bL(t)^\top \mathtt b(t)\big)\, \mathrm d\bB(t).
		\end{align*}
		Taking the expectation and noting that the expectation of the stochastic integral equals zero, we obtain
		\begin{align*}
			\ell_{\bL}(\bY) = \Eb\Big[\bY_\circ^\top \bL_\circ+ \int_0^T \Big(\mathtt a(t)^\top \bL(t)+{\langle\mathtt b(t), \tilde{\mathtt b}(t)\rangle}_\Fr\Big)\,\mathrm dt\Big],
		\end{align*}
		which shows part 2).
	\end{enumerate}
\end{proof}

\begin{remark}\label{Re:EstExpVal}
	Let $\bY\in\X$ hold. Then,
	\begin{align*}
		\sup_{t\in [0,T]}{\|\Eb[\bY(t)]\|}_{\Rb^d}\le\sup_{t\in [0,T]} \Eb\left[{\|\bY(t)\|}_{\Rb^d}\right]\le\sup_{t\in [0,T]}\Big(\Eb\left[{\|\bY(t)\|}_{\Rb^d}^2\right]\Big)^{1/2}\le  C\,{\|\bY\|}_\X
	\end{align*}
	follows from H\"older's inequality and \cref{Le:ExtXb}.
\end{remark}

\section{The state equation}
\label{sec:stateEquation}

For $r \in \Nb$ we introduce the Hilbert space $\U\coloneqq L^2(0,T;\Rb^r)$ and the set of \emph{admissible (deterministic) controls} which is given as
\begin{align}
	\Uad\colonequals\big\{u\in\U\;\big| \; u(t)\in[\ua,\ub]\text{ for almost all (f.a.a.) }t\in[0,T]\big\}
	\label{eq:admissable_controls}
\end{align}
with $\ua,\ub\in\Rb^r$ satisfying $\ua \le\ub $ in $\Rb^r$ (i.e., component by component) and $[\ua,\ub]\colonequals\{u\in\Rb^r\,\big|\,\ua\le u\le\ub\text{ in }\Rb^r\}$.

\begin{remark}
	\label{rem:Uad_nonemptyConvexClosed}
	\begin{enumerate}[label=\text{\textup{\arabic*)}}]
		\item Notice that $\Uad$ defined in \eqref{eq:admissable_controls} is nonempty, convex and closed.
		\item \label{item:remark_calibration_example} 
		In one numerical example carried out in \cref{sec:SPT_example}, we consider calibration problems, where the unknowns are time-independent parameters. In that case we have $\U^\mathrm c=\Rb^r$ and $\UadCalib=[\ua,\ub]\subset\U^\mathrm c$, i.e., the set $\UadCalib$ is even compact.
	\end{enumerate}
\end{remark}

Next let us start with the introduction of the coefficient functions that constitute the equation, where the following standard hypothesis on the coefficient functions is taken from \cite[p.~44]{YongZhou1999StochasticControls}.
\begin{assumption}
	\label{Assumption:coefficients}
	We are given measurable \emph{drift} and \emph{diffusion coefficient functions}
	\begin{align*}
		a:\Rb^d\times\Rb^r\times[0,T]\to\Rb^d\quad\text{and}\quad b:\Rb^d\times \Rb^r\times[0,T]\to\Rb^{d\times m},
	\end{align*}
	respectively. 
	Further, there exists a (Lipschitz) constant $L_U>0$ such that for every $x,\tilde x\in\Rb^d$, $u,\tilde u\in[\ua,\ub]$ and f.a.a $t\in[0,T]$ there exists a constant $L_U>0$ such that the \emph{Lipschitz condition}
	\begin{align}
		\label{eq:LipCondition}
		{\|a(x,u,t)-a(\tilde x,\tilde u,t)\|}_{\Rb^d}+{\|b(x,u,t)-b(\tilde x, \tilde u,t)\|}_\Fr\le L_U\,\left({\|x-\tilde x\|}_{\Rb^d}+{\|u-\tilde u\|}_{\Rb^r}\right)
	\end{align}
	holds. 
	Moreover,
	\begin{align}
		\label{Assumption:coefficients_integral}
		\int_0^T{\|a(0,u(t),t)\|}^2_{\Rb^d}+{\|b(0,u(t),t)\|}_\Fr^2\,\mathrm dt\le L_U^2        
	\end{align}	
	is valid for all $u\in \Uad$.
\end{assumption}

Suppose that $\bX_\circ\in L^2(\Omega;\Rb^d)$ is a fixed given initial condition being $\Fs_\circ$-measurable. For a given (deterministic) input  function  $u\in\Uad$, the associated stochastic process $\bX$ is an $\Rb^d$-valued solution to the stochastic differential equation (SDE):
\begin{align}
	\left\{
	\begin{aligned}
		\mathrm d\bm X(t)&=a(\bm X(t),u(t),t)\,\mathrm dt+b(\bm X(t),u(t),t)\,\mathrm d\bm B(t)\quad\text{for all }t \in(0,T],\\
		\bm X(0)&=\bm X_\circ.
	\end{aligned}
	\right.
	\label{SDE}
\end{align}
The next definition specifies the concept of a solution and uniqueness in our context.

\begin{definition}
	\label{Definition:SDE}
	\begin{enumerate}[label={\em\arabic*)}]
		\item If $\bX$ belongs to $\X$ and satisfies
		\begin{align*}
			\bm X(t)=\bm X_\circ+\int_0^ta(\bm X(s),u(s),s)\,\mathrm ds+\int_0^tb(\bm X(s),u(s),s)\,\mathrm d\bm B(s)\quad\text{in }L^2(\Omega;\Rb^d)
		\end{align*}
		for all $t\in[0,T]$, we call $\bX$ a \emph{solution} to \eqref{SDE}.
		\item We say that a solution $\bX\in\X$ to \eqref{SDE} is \emph{unique} if for any other solution $\bm{\tilde X}\in\X$ to \eqref{SDE} we have
		\begin{align*}
			\mathbb P\left(\big\{\omega\in\Omega\,\big|\,\bm X(t)=\bm{\tilde X}(t)\text{ for all }t\in[0,T]\big\}\right)=1.
		\end{align*}
	\end{enumerate}
\end{definition}

\begin{remark}
	\label{rem:linear_example}
	For the example introduced in \cref{rem:Uad_nonemptyConvexClosed}-\ref{item:remark_calibration_example}, the SDE \eqref{SDE} is given as
	\begin{align}
		\left\{
		\begin{aligned}
			\mathrm d\bm X(t)&=a(\bm X(t),u,t)\,\mathrm dt+b(\bm X(t),u,t)\,\mathrm d\bm B(t)\quad\text{for all }t \in(0,T],\\
			\bm X(0)&=\bm X_\circ
		\end{aligned}
		\right.
		\label{SDE-Cal}
	\end{align}
	for a given (parameter) vector $u\in\UadCalib$. 
	In that case, its solution $\bX\in\X$ satisfies
	\begin{align*}
		\bm X(t)=\bm X_\circ+\int_0^ta(\bm X(s),u,s)\,\mathrm ds+\int_0^tb(\bm X(s),u,s)\,\mathrm d\bm B(s)\quad\text{in }L^2(\Omega;\Rb^d)\text{ for all }t\in[0,T].
	\end{align*}
\end{remark}

Now, we recall the existence result from \cite[Corollary~1.6.4]{ YongZhou1999StochasticControls}.

\begin{theorem}
	\label{Theorem:ExUni}
	Let \cref{Assumption:coefficients} hold and $\bX_\circ\in L^2(\Omega;\Rb^d)$ be an $\Fs_\circ$-measurable random variable. Then for every $u\in \Uad$ there exists a unique solution $\bX\in\X$ of \eqref{SDE}.
\end{theorem}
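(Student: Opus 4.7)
My plan is to apply Banach's fixed point theorem to a Picard iteration map on the Hilbert space $\X$ from \cref{Lem:IsoX}, exploiting the continuous embeddings in \cref{Le:ExtXb}. Since the initial value is prescribed, I work on the closed affine subspace
\[
\Xc\colonequals\{\bY\in\X:\bY(0)=\bX_\circ\text{ almost surely}\}\subset\X,
\]
which is complete with respect to $\|\cdot\|_\X$ because evaluation at $t=0$ is continuous from $\X$ into $L^2(\Omega;\Rb^d)$ by \cref{Le:ExtXb}. I then define the Picard map $\mathcal T\colon\Xc\to\Xc$ by
\[
(\mathcal T\bY)(t)\colonequals\bX_\circ+\int_0^t a(\bY(s),u(s),s)\,\mathrm ds+\int_0^t b(\bY(s),u(s),s)\,\mathrm d\bB(s),\quad t\in[0,T],
\]
so that solutions of \eqref{SDE} in the sense of \cref{Definition:SDE} 1) are precisely the fixed points of $\mathcal T$ in $\Xc$.

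Well-definedness of $\mathcal T$ reduces to verifying that $a(\bY(\cdot),u(\cdot),\cdot)\in\Lb^2_\Fs(\Rb^d)$ and $b(\bY(\cdot),u(\cdot),\cdot)\in\Lb^2_\Fs(\Rb^{d\times m})$, whereupon $\mathcal T\bY\in\X$ follows from the Hilbert-space construction of $\X$. Progressive measurability comes from the joint measurability of $a,b$, progressive measurability of $\bY$ (using $\X\subset\Lb^2_\Fs(\Rb^d)$), and the deterministic nature of $u$. Square-integrability follows by applying \eqref{eq:LipCondition} at the base point $(0,u(t),t)$ and combining with \eqref{Assumption:coefficients_integral}, $u\in\Uad$, and $\bY\in L^2(\Omega\times(0,T);\Rb^d)$. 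For the contraction estimate I exploit that $\mathcal T\bY-\mathcal T\bZ$ has vanishing initial datum, so the isometry of \cref{Lem:IsoX} together with \eqref{eq:LipCondition} yields
\[
\|\mathcal T\bY-\mathcal T\bZ\|_\X^2\le 2L_U^2\,\Eb\!\left[\int_0^T\|\bY(t)-\bZ(t)\|_{\Rb^d}^2\,\mathrm dt\right]\le 6L_U^2 T\max\{T,1\}\,\|\bY-\bZ\|_\X^2,
\]
the second inequality using the embedding $\X\subset C([0,T];L^2(\Omega;\Rb^d))$ from \cref{Le:ExtXb}.

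The main obstacle is that this last bound gives a strict contraction only for sufficiently small $T$. To handle arbitrary $T>0$, I would either iterate the Picard estimate to obtain the standard factorial bound
\[
\Eb\bigl[\|(\mathcal T^n\bY)(t)-(\mathcal T^n\bZ)(t)\|_{\Rb^d}^2\bigr]\le\frac{(Ct)^n}{n!}\,\|\bY-\bZ\|_{C([0,T];L^2(\Omega;\Rb^d))}^2,
\]
so that $\mathcal T^n$ is a contraction on $\Xc$ for $n$ large enough and therefore still admits a unique fixed point; alternatively, one splits $[0,T]$ into subintervals of length below the smallness threshold and concatenates the local solutions, using $\bX(jT_0)\in L^2(\Omega;\Rb^d)$ (which is $\Fs_\circ(jT_0)$-measurable) as the new initial datum on each piece. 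Either route produces a unique $\bX\in\Xc$ with $\mathcal T\bX=\bX$. Any other $\X$-solution $\bm{\tilde X}$ of \eqref{SDE} automatically satisfies $\bm{\tilde X}(0)=\bX_\circ$, hence lies in $\Xc$ and coincides with $\bX$ in $\X$; the $\Pb$-almost sure continuity of paths of elements of $\X$ then upgrades this equality to the pathwise statement in \cref{Definition:SDE} 2).
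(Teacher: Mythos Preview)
Your argument is correct. The paper itself does not give a proof of this theorem; it simply records the statement as a consequence of \cite[Corollary~1.6.4]{YongZhou1999StochasticControls}. Your proposal, by contrast, supplies a self-contained Picard iteration / Banach fixed-point argument carried out directly in the Hilbert space $\X$ of \cref{Lem:IsoX}, using the isometry $\Phi$ to compute $\|\mathcal T\bY-\mathcal T\bZ\|_\X$ exactly and the embedding of \cref{Le:ExtXb} to close the estimate. The globalisation via either the factorial bound on $\mathcal T^n$ or subdivision of $[0,T]$ is standard and works as you describe; the upgrade from equality in $\X$ to the pathwise statement of \cref{Definition:SDE}~2) via continuity of paths is also fine. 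What your route buys is that the argument stays entirely inside the paper's own function-space framework, so no translation between the setting of \cite{YongZhou1999StochasticControls} and the space $\X$ is needed. One small point worth being explicit about: when you verify $a(\bY(\cdot),u(\cdot),\cdot)\in\Lb^2_\Fs(\Rb^d)$, the progressive measurability is with respect to the extended filtration $\{\Fs_\circ(t)\}$ (since $\bY$ depends on $\bX_\circ$), which is how the paper's construction of $\X$ should be read; this is a notational looseness in the paper rather than a flaw in your argument.
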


\begin{remark}
	To prove \cref{Theorem:ExUni} we do not need the Lipschitz continuity of the coefficient functions $a$ and $b$ with respect to the control variable $u$. Moreover, for the unique solvability, the Lipschitz condition (with respect to $x$) can be weakened by a local one; cf. {\em\cite[Chapter~2, Theorem~3.4]{Mao2008SDE}}.
\end{remark}

Let $\bX_\circ\in L^2(\Omega;\Rb^d)$ be an $\Fs_\circ$-measurable random variable. 
Due to \cref{Theorem:ExUni} we introduce the non-linear solution operator
\begin{align}
	\label{OpSm}
	\Sm:\Uad\to\X,\quad\bX=\Sm(u)\text{ is the unique solution to \eqref{SDE} for }u\in\Uad.
\end{align}

\begin{theorem}
	\label{Theorem:Apriori}
	Let \cref{Assumption:coefficients} hold and $\bX_\circ\in L^2(\Omega;\Rb^d)$ be an $\Fs_\circ$-measurable random variable. 
	Then, for all $u,\tilde u\in \Uad$, we have 
	\begin{align}
		\label{Lipschitz1}
		{\| \Sm(u)-\Sm(\tilde u)\|}_{C([0,T]; L^2(\Omega;\Rb^d))} & \le C_1\,{\|u-\tilde u\|}_{\U},\\
		\label{Lipschitz2}
		{\|\Sm(u)\|}_{C([0,T]; L^2(\Omega;\Rb^d))}&\le C_2
	\end{align}
	with non-negative constants $C_1 = C_1(L_U,T)$ and $C_2 = C_2(L_U,T,\bX_\circ)$. 
\end{theorem}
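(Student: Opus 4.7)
The plan is to derive both inequalities by combining the It\^o isometry, the Lipschitz hypothesis \eqref{eq:LipCondition}, and Gr\"onwall's inequality. I would obtain \eqref{Lipschitz1} first, and then \eqref{Lipschitz2} follows by an analogous argument in which the $u$-Lipschitz term is replaced by the uniform bound \eqref{Assumption:coefficients_integral}.

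For \eqref{Lipschitz1} I would set $\bX = \Sm(u)$ and $\bXt = \Sm(\tilde u)$, subtract the two integral equations from \cref{Definition:SDE} (noting that the initial values coincide), square the $\Rb^d$-norm, and use $(\alpha+\beta)^2 \le 2\alpha^2+2\beta^2$ to split the drift and diffusion contributions. Taking expectations, the Cauchy--Schwarz inequality bounds the drift term by $T\int_0^t \Eb[\|a(\bX(s),u(s),s)-a(\bXt(s),\tilde u(s),s)\|_{\Rb^d}^2]\,\mathrm ds$, while the It\^o isometry controls the diffusion term by $\int_0^t \Eb[\|b(\bX(s),u(s),s)-b(\bXt(s),\tilde u(s),s)\|_\Fr^2]\,\mathrm ds$, just as in the proof of \cref{Le:ExtXb}. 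Invoking \eqref{eq:LipCondition} and again using $(\alpha+\beta)^2\le 2\alpha^2+2\beta^2$ then produces an estimate of the form
\begin{align*}
\Eb\bigl[\|\bX(t)-\bXt(t)\|_{\Rb^d}^2\bigr] \le c_1 \int_0^t \Eb\bigl[\|\bX(s)-\bXt(s)\|_{\Rb^d}^2\bigr]\,\mathrm ds + c_2\,{\|u-\tilde u\|}_\U^2
\end{align*}
with constants $c_1,c_2$ depending only on $L_U$ and $T$. Since the right-hand side is monotone in $t$, taking the supremum over $[0,t]$ on the left and applying Gr\"onwall's lemma delivers \eqref{Lipschitz1} with constant $C_1(L_U,T)$.

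For \eqref{Lipschitz2} I would imitate this strategy starting directly from the integral representation of $\bX$. Using the splittings $a(\bX,u,s) = (a(\bX,u,s)-a(0,u,s)) + a(0,u,s)$ and $b(\bX,u,s) = (b(\bX,u,s)-b(0,u,s)) + b(0,u,s)$, the Lipschitz estimate \eqref{eq:LipCondition} applied to the first summands yields a term of the form $\int_0^t \Eb[\|\bX(s)\|_{\Rb^d}^2]\,\mathrm ds$, while \eqref{Assumption:coefficients_integral} controls the remaining $u$-dependent contributions uniformly by $L_U^2$ for all $u\in\Uad$. An additional $\Eb[\|\bX_\circ\|_{\Rb^d}^2]$ term appears from the initial condition. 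A final Gr\"onwall step then gives \eqref{Lipschitz2} with $C_2$ depending on $L_U$, $T$, and ${\|\bX_\circ\|}_{L^2(\Omega;\Rb^d)}$.

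The only delicate point is justifying that the quantity $t\mapsto \sup_{s\in[0,t]}\Eb[\|\bX(s)-\bXt(s)\|_{\Rb^d}^2]$ is a priori finite, so that the Gr\"onwall argument is legitimate. This is precisely the content of the first embedding in \eqref{embedding-X} established in \cref{Le:ExtXb}, which ensures $\bX,\bXt\in C([0,T];L^2(\Omega;\Rb^d))$; the same embedding identifies the suprema on the left-hand sides with the norms appearing in \eqref{Lipschitz1} and \eqref{Lipschitz2}.
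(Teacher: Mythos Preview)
Your proposal is correct and follows essentially the same approach as the paper: split drift and diffusion via $(\alpha+\beta)^2\le 2\alpha^2+2\beta^2$, use Cauchy--Schwarz and the It\^o isometry respectively, invoke \eqref{eq:LipCondition}, and close with Gr\"onwall; for \eqref{Lipschitz2} the paper likewise centers at $x=0$ and uses \eqref{Assumption:coefficients_integral}. The only cosmetic difference is that the paper applies Gr\"onwall pointwise in $t$ and takes the supremum afterward, whereas you take the supremum first---both routes are valid.
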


\begin{proof}
	To show the estimates \eqref{Lipschitz1} and \eqref{Lipschitz2}, we follow a standard Gronwall approach (see \cite[proof of Theorem~5.2.1]{Oksendal2000SDEIntroApplication}). For this, let $u,\tilde u\in \Uad$ be given. We set $\bX:=\Sm(u)$ and $\bm{\tilde X}:=\Sm(\tilde u)$. 
	In the same way as in the proof of \cref{Le:ExtXb}, we obtain for fixed $t\in [0,T]$  
	\begin{align}
		\label{Lemma:Est-1}
		\begin{aligned}
			\Eb\Big[{\|\bX(t)-\bm{\tilde X}(t)\|}_{\Rb^d}^2\Big]&\le 2\,\Eb\bigg[\Big\|\int_0^ta(\bX(s),u(s),s)-a(\bm{\tilde X}(t),\tilde u(s),s)\,\mathrm d s\Big\|_{\Rb^d}^2\bigg]\\
			&\quad+2\,\mathbb E\bigg[\Big\|\int_0^tb(\bm X(s),u(s),s)-b(\bm{\tilde X}(s),\tilde u(s),s)\,\mathrm d\bB(s)\Big\|_{\Rb^d}^2\bigg] \\
			\qquad\qquad\qquad&\le 2T \,\Eb\bigg[\int_0^t {\|a(\bX(s),u(s),s)-a(\bm{\tilde X}(t),\tilde u(s),s)\|}_{\Rb^d}^2\,\mathrm d s\bigg]\\
			&\quad+ 2 \,\Eb\bigg[\int_0^t{\|b(\bX(s),u(s),s)-b(\bm{\tilde X}(s),\tilde u(s),s)\|}^2_\Fr\,\mathrm ds\bigg].
		\end{aligned}
	\end{align}
	Now the Lipschitz condition of \cref{Assumption:coefficients} yields 
	\begin{align*}
		&\int_0^t{\|a(\bX(s),u(s),s)-a(\bm{\tilde X}(t),\tilde u(s),s)\|}_{\Rb^d}^2\,\mathrm ds
		\\
		&\qquad\qquad
		\le \int_0^t\Big(L_U\big({\|\bX(s)-\bm{\tilde X}(s)\|}_{\Rb^d}+{\|u(s)-\tilde u(s)\|}_{\Rb^r}\big)\Big)^2\,\mathrm ds
		\\
		&\qquad\qquad
		\le 2L_U^2\int_0^t{\|\bX(s)-\bm{\tilde X}(s)\|}_{\Rb^d}^2+{\|u(s)-\tilde u(s)\|}_{\Rb^r}^2\,\mathrm ds
	\end{align*}
	for $t\in [0,T]$. 
	We proceed analogously for the second term on the right-hand side of \eqref{Lemma:Est-1}. 
	Consequently, we obtain
	\begin{align*}
		\Eb\left[{\|\bX(t)-\bm{\tilde X}(t)\|}_{\Rb^d}^2\right]
		\le c_1\int_0^t\Eb\left[{\|\bX(s)-\bm{\tilde X}(s)\|}_{\Rb^d}^2\right]+{\|u(s)-\tilde u(s)\|}_{\Rb^r}^2\,\mathrm ds
	\end{align*}
	for $t\in [0,T]$ and $c_1=4\max\{T,1\}L_U^2$. 
	Now, an application of Gronwall's inequality (cf., e.g., \cite[p.~92]{Evans2013SDE}) gives
	\begin{align}
		\label{ProfLemma}
		\Eb\left[{\|\bX(t)-\bm{\tilde X}(t)\|}_{\Rb^d}^2\right]\le c_1e^{\int_0^tc_1\,\mathrm ds}\int_0^T{\|u(s)-\tilde u(s)\|}_{\Rb^r}^2\,\mathrm ds 
	\end{align}
	for $t\in [0,T]$, which yields \eqref{Lipschitz1} with  $C_1(L_U,T) = \sqrt{c_1e^{c_1T}}$.\hfill\\
	To show \eqref{Lipschitz2} we proceed similarly as in the proof of \cref{Le:ExtXb} and utilize again \cref{Assumption:coefficients}. 
	Note that
	\begin{align*}
		\Eb\big[{\|\bX(t)\|}_{\Rb^d}^2\big]&\le3\bigg(\Eb\big[{\|\bX_\circ\|}_{\Rb^d}^2\big]+T\Eb\bigg[\int_0^t {\|a(\bX(s),u(s),s)\|}_{\Rb^d}^2\,\mathrm d s\bigg]\\
		&\hspace{8mm}+\Eb\bigg[\int_0^t{\|b(\bX(s),u(s),s)\|}^2_\Fr\,\mathrm ds\bigg]\bigg)
	\end{align*}
	for $t\in [0,T]$. 
	For brevity, we use the notation $\mathfrak a={\|a(0,u(\cdot),\cdot\,)\|}_{\Rb^d}$ and $\mathfrak b={\|b(0,u(\cdot),\cdot\,)\|}_\Fr$. 
	Note that
	\begin{align}
		\label{a-Estimate}
		\begin{aligned}
			&\Eb\bigg[\int_0^t {\|a(\bX(s),u(s),s)\|}_{\Rb^d}^2\,\mathrm d s\bigg]\\
			&\le 2\Eb\bigg[\int_0^t {\|a(\bX(s),u(s),s)-a(0,u(s),s)\|}_{\Rb^d}^2+{\|a(0,u(s),s)\|}_{\Rb^d}^2\,\mathrm d s\bigg]\\
			&\le 2\Eb\bigg[\int_0^tL_U^2\,{\|\bX(t)\|}_{\Rb^d}^2+|\mathfrak a(s)|^2\,\mathrm d s\bigg]=2\int_0^tL_U^2\Eb\big[{\|\bX(s)\|}_{\Rb^d}^2\big]+|\mathfrak a(s)|^2\,\mathrm ds.
		\end{aligned}
	\end{align}
	Similarly, we infer that
	\begin{align}
		\label{b-Estimate}
		\Eb\bigg[\int_0^t {\|b(\bX(s),u(s),s)\|}_\Fr^2\,\mathrm d s\bigg]\le 2\int_0^tL_U^2\Eb\big[{\|\bX(s)\|}_{\Rb^d}^2\big]+{|\mathfrak b(s)|}^2\,\mathrm ds.
	\end{align}
	By \cref{Assumption:coefficients}, we obtain 
	\begin{align*}
		\Eb\big[{\|\bX(t)\|}_{\Rb^d}^2\big]&\le3\bigg(\Eb\big[{\|\bX_\circ\|}_{\Rb^d}^2\big]+2\max\{T,1\}\int_0^T|\mathfrak a(s)|^2+{|\mathfrak b(s)|}^2\,\mathrm ds\\
		&\hspace{8mm}+2(T+1)L_U^2\int_0^t\Eb\big[{\|\bX(s)\|}_{\Rb^d}^2\big]\,\mathrm ds\bigg)\\
		&\le3\Eb\big[{\|\bX_\circ\|}_{\Rb^d}^2\big]+6\max\{T,1\}L_U^2+\hat c_2\int_0^t\Eb\big[{\|\bX(s)\|}_{\Rb^d}^2\big]\,\mathrm ds\\
		&\le\tilde c_2+\hat c_2\int_0^t\Eb\big[{\|\bX(s)\|}_{\Rb^d}^2\big]\,\mathrm ds
	\end{align*}
	for $t\in[0,T]$ with $\hat c_2=6(T+1)L_U^2$ and $\tilde c_2=3\Eb[\|\bX_\circ\|_{\Rb^d}^2]+6\max\{T,1\}L_U^2$. 
	By Gronwall's inequality and \cref{Assumption:coefficients}, this implies
	\begin{align*}
		\Eb\big[{\|\bX(t)\|}_{\Rb^d}^2\big]\le \tilde c_2\exp\bigg(\int_0^t\hat c_2\mathrm d\tau\bigg)\le\tilde c_2\exp\big(\hat c_2T\big)\equalscolon c_2
	\end{align*}
	where $c_2$ depends on $L_U$, $T$ and $\bX_\circ$. 
	Therefore, \eqref{Lipschitz2} holds with $C_2(L_U,T,\bX_\circ) =c_2^{1/2}$.
\end{proof}

\begin{corollary}
	\label{Lemma:ExUni_Uad}
	In the situation of \cref{Theorem:ExUni}, we have for all $u,\tilde u\in \Uad$  
	\begin{align}
		\label{Lipschitz3}
		{\|\Sm(u)-\Sm(\tilde u)\|}_{\X}&\le C'_1 {\|u-\tilde u\|}_{\U},\\
		{\| \Sm(u)\|}_{\X} & \le C'_2\label{Lipschitz4}
	\end{align}
	with constants $C'_1 = C'_1(L_U,T)$ and $C'_2 = C'_2(L_U,T,\bX_\circ)$. 
	Therefore, $\Sm\colon \Uad\to \X$ is Lip\-schitz continuous and has bounded range.
\end{corollary}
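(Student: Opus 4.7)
My plan is to unfold the norm on $\X$ using the isomorphism from \cref{Lem:IsoX} and then apply the estimates from \cref{Theorem:Apriori} together with \cref{Assumption:coefficients}. Since $\Sm(u)=\Phi(\bX_\circ, a(\Sm(u), u, \cdot), b(\Sm(u), u, \cdot))$ by definition of a solution, the $\X$-norm decomposes cleanly into initial, drift, and diffusion contributions.

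For \eqref{Lipschitz3}, set $\bX\colonequals\Sm(u)$ and $\bm{\tilde X}\colonequals\Sm(\tilde u)$. The difference $\bX-\bm{\tilde X}$ has vanishing initial condition, so
\begin{align*}
	{\|\bX-\bm{\tilde X}\|}_{\X}^2
	= \Eb\bigg[\int_0^T {\|a(\bX(s),u(s),s)-a(\bm{\tilde X}(s),\tilde u(s),s)\|}_{\Rb^d}^2 + {\|b(\bX(s),u(s),s)-b(\bm{\tilde X}(s),\tilde u(s),s)\|}_\Fr^2\,\mathrm ds\bigg].
\end{align*}
Next I would apply the Lipschitz condition \eqref{eq:LipCondition} and the elementary inequality $(\alpha+\beta)^2\le 2\alpha^2+2\beta^2$ to bound the integrand by $4L_U^2({\|\bX(s)-\bm{\tilde X}(s)\|}_{\Rb^d}^2+{\|u(s)-\tilde u(s)\|}_{\Rb^r}^2)$. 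The control contribution integrates to $\|u-\tilde u\|_\U^2$ directly. For the state contribution, I would use Fubini's theorem and \eqref{Lipschitz1}:
\begin{align*}
	\Eb\bigg[\int_0^T{\|\bX(s)-\bm{\tilde X}(s)\|}_{\Rb^d}^2\,\mathrm ds\bigg]
	\le T\,{\|\bX-\bm{\tilde X}\|}_{C([0,T];L^2(\Omega;\Rb^d))}^2
	\le T C_1^2\,{\|u-\tilde u\|}_\U^2,
\end{align*}
which yields \eqref{Lipschitz3} with $C_1'=2L_U\sqrt{TC_1^2+1}$.

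For \eqref{Lipschitz4}, the same unfolding gives
\begin{align*}
	{\|\Sm(u)\|}_{\X}^2 = \Eb\big[{\|\bX_\circ\|}_{\Rb^d}^2\big] + \Eb\bigg[\int_0^T {\|a(\bX(s),u(s),s)\|}_{\Rb^d}^2 + {\|b(\bX(s),u(s),s)\|}_\Fr^2\,\mathrm ds\bigg].
\end{align*}
Here I would split, as in \eqref{a-Estimate} and \eqref{b-Estimate}, $\|a(\bX,u,\cdot)\|^2\le 2\|a(0,u,\cdot)\|^2+2L_U^2\|\bX\|^2$ (analogously for $b$). The $u$-dependent terms integrate to at most $2L_U^2$ by \eqref{Assumption:coefficients_integral}, and the state term is bounded via \eqref{Lipschitz2} by $2L_U^2 T\,C_2^2$. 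Combining, we obtain \eqref{Lipschitz4} with $C_2'$ depending only on $L_U$, $T$, and $\bX_\circ$.

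There is no real obstacle: the only mildly subtle point is the exchange of expectation and time integral (justified by Fubini, since all integrands are nonnegative and measurable on $\Omega\times[0,T]$) and the use of \eqref{Assumption:coefficients_integral} to control the inhomogeneous part $a(0,u(s),s)$, $b(0,u(s),s)$ uniformly in $u\in\Uad$. The Lipschitz continuity of $\Sm$ and the boundedness of its range on $\Uad$ then follow immediately from \eqref{Lipschitz3} and \eqref{Lipschitz4}.
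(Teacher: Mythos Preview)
Your proof is correct and follows essentially the same approach as the paper: both unfold the $\X$-norm via the integral representation, apply the Lipschitz condition \eqref{eq:LipCondition} (together with the splitting at $x=0$ for the boundedness part, exactly as in \eqref{a-Estimate}--\eqref{b-Estimate}), and then invoke the $C([0,T];L^2(\Omega;\Rb^d))$ estimates \eqref{Lipschitz1}--\eqref{Lipschitz2} from \cref{Theorem:Apriori}. The only cosmetic difference is that you track the resulting constants explicitly, whereas the paper absorbs them directly.
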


\begin{proof}
	Again we set $\mathfrak a=\|a(0,u(\cdot),\cdot\,)\|_{\Rb^d}$ and $\mathfrak b=\|b(0,u(\cdot),\cdot\,)\|_\Fr$. 
	Utilizing the definition of the norm in $\X$, \eqref{a-Estimate}, \eqref{b-Estimate} and \cref{Assumption:coefficients}, we find that
	\begin{align*}
		{\|\bX\|}_\X^2
		&=\Eb\big[{\|\bX_\circ\|}_{\Rb^d}^2\big]
		+\Eb\bigg[\int_0^T{\|a(\bX(s),u(s),s)\|}_{\Rb^d}^2\,\mathrm ds\bigg]+\Eb\bigg[\int_0^T{\|b(\bX(s),u(s),s)\|}_\Fr^2\,\mathrm ds\bigg]\\
		&\le\Eb\big[{\|\bX_\circ\|}_{\Rb^d}^2\big]+\int_0^T4L_U^2\Eb\big[{\|\bX(s)\|}_{\Rb^d}^2\big]+{|\mathfrak a(s)|}^2+{|\mathfrak b(s)|}^2\,\mathrm ds\\
		&\le \max\{1,4L_U^2\}{\|\bX\|}_{C([0,T]; L^2(\Omega;\Rb^d))}^2+L_U^2.
	\end{align*}
	Now \eqref{Lipschitz4} follows directly from \eqref{Lipschitz2} in Theorem~\ref{Theorem:Apriori}. 
	To show \eqref{Lipschitz3}, let $u,\tilde u\in \Uad$ and set $\bX:=\Sm(u)$ and $\bm{\tilde X}:= \Sm(\tilde u)$. 
	In the same way as above, the Lipschitz continuity condition in \cref{Assumption:coefficients} leads to
	\begin{align*}
		{\|\bX-\bm{\tilde X}\|}_{\X}^2 \le 4L_U^2\big({\|\bX-\bm{\tilde X}\|}_{C([0,T]; L^2(\Omega;\Rb^d))}^2+{\|u-\tilde u\|}_\U^2\big).    
	\end{align*}
	Therefore, \eqref{Lipschitz3} follows from \eqref{Lipschitz1} in \cref{Theorem:Apriori}.
\end{proof}

\section{The optimization problem}
\label{sec:Optimization_problem}
%
Next, we focus on the optimization problem itself. 
We start by introducing the cost functional
\begin{align}
	\label{GeneralCost}
	J(\bX,u) \coloneqq j(\bX) + j_T(\bX(T)) + \frac{\kappa}{2}\,{\|u\|}_\U^2,
\end{align}
where $j$ and $j_T$ fulfill the following hypothesis:
\begin{assumption}
	\label{Assumption:continuity_C}
	The functionals $j:\X\to\Rb$ and $j_T:L^2(\Omega;\Rb^d)\to\Rb$ are convex, lower semicontinuous and bounded from below.
\end{assumption}
Notice that $j$ and $j_T$ take random variables and map them to deterministic outputs. 

\begin{example}
	\label{Ex:Cost}
	For example, we may consider the following non-negative functional of tracking type for which \cref{Assumption:continuity_C} holds:
	\begin{align}
		\label{eq:objective_calib} 
		\begin{aligned}
			J(\bX,u)&=\frac{1}{2}\bigg(\int_0^T{\|\Cm(\Eb[\bX(t)])-\cd(t)\|}_{\Rb^{\ell}}^2\,\mathrm dt+{\|\Cm(\Eb[\bX(T)])-\cdT\|}_{\Rb^{\ell}}^2+\kappa\,{\|u\|}_\U^2\bigg)
		\end{aligned}
	\end{align}
	for $(\bm X,u)\in\X\times\U$. In \eqref{eq:objective_calib} and $\kappa>0$ is the control weight. 
	We suppose that the (non-linear) mapping $\Cm:\Rb^d\to\Rb^\ell$ is given in such a way that $J$ is convex and lower semicontinuous. 
	Notice that the objective defined in \eqref{eq:objective_calib} differs from the one typically used in the stochastic optimal control setting (see, e.g., {\em\cite{Fabbri2017StochasticOptimalControlInInfi,Pham2009StochasticModellingOptimization}}), but is present in literature for calibration problems; see, e.g., {\em\cite{FabrizioMonetti2015MethodologiesCalibration,Estevao2000FunctionalCalibration,Gilli2011calibratingPricingModels,Kaebe2009AdjointMonteCarloCalibra, LoHaslam2008algorithmParamEstSDE}}).
\end{example}

Now we can formulate our stochastic optimization problem:
\begin{align}
	\label{eq:SDE_Opti_OU}
	\min J(\bm X,u)\quad\text{subject to (s.t.)}\quad(\bm X,u)\in\X\times \Uad\text{ satisfies \eqref{SDE}.}
\end{align}
Utilizing the solution operator $\Sm$ (introduced in \eqref{OpSm}), we define the reduced cost functional
\begin{align}
	\label{ReducedCost}
	\hat J(u)\colonequals J(\Sm(u),u)\quad\text{for }u\in\Uad.
\end{align}
The associated reduced problem reads
\begin{align}
	\label{eq:SDE_Opti_OU_red}
	\min\hat J(u)\quad\text{s.t.}\quad u\in\Uad.
\end{align}

In \cref{Th:OptExSol}, we deal with the question of the existence of optimal solutions to \eqref{eq:SDE_Opti_OU_red}. 
In view of our numerical example later on, we restrict ourselves to the case of a linear SDE in the case of $\U=L^2(0,T;\Rb^r)$; see also the discussion after the proof of \cref{Th:OptExSol}.

\begin{assumption}
	\label{A:LinSDE}
	Suppose that \eqref{SDE} is a linear SDE. 
	More precisely, the coefficients $a$ and $b$ are of the following form
	\begin{align}
		\label{SettingLinear}
		\begin{aligned}
			a(x,u,t)&=\mathfrak a(t)+\mathfrak A_1(t)x+\mathfrak A_2(t)u&&\text{for }(x,u,t)\in\Rb^d\times\Rb^r\times[0,T],\\
			b(x,u,t)&=\mathfrak b(t)+\mathfrak B_1(t)x+\mathfrak B_2(t)u&&\text{for }(x,u,t)\in\Rb^d\times\Rb^r\times[0,T]
		\end{aligned}
	\end{align}
	with (deterministic) functions
	\begin{align*}
		\mathfrak a&\in L^2(0,T;\Rb^d),&\mathfrak A_1&\in L^\infty(0,T;\Rb^{d\times d}),&\mathfrak A_2&\in L^\infty(0,T;\Rb^{d\times r}),\\
		\mathfrak b&\in L^2(0,T;\Rb^{d\times m}),&\mathfrak B_1&\in L^\infty(0,T;\Rb^{d\times m\times d}),&\mathfrak B_2&\in L^\infty(0,T;\Rb^{d\times m\times r})
	\end{align*}
	and
	\begin{align*}
		\left.
		\begin{aligned}
			\big(\mathfrak B_1(t)x\big)_{ij}&=\sum_{l=1}^d\mathfrak B_{1,ijl}(t)x_l,\quad\\
			\big(\mathfrak B_2(t)u\big)_{ij}&=\sum_{l=1}^r\mathfrak B_{2,ijl}(t)x_l
		\end{aligned}
		\right\}\quad\text{for }1\le i\le d\text{ and }1\le j\le m.
	\end{align*}
\end{assumption}

\begin{remark}
	\label{Rem:LinSDE}
	\begin{enumerate}[label=\text{\textup{\arabic*)}}]
		\item \label{item:linear_uniqueSolution} It follows from \cref{A:LinSDE} that
		\begin{align}
			\label{LinSDE}
			\left\{
			\begin{aligned}
				\mathrm d\bX(t)&=\big(\mathfrak a(t)+\mathfrak A_1(t)\bX(t)+\mathfrak A_2(t)u(t)\big)\,\mathrm dt\\
				&\quad+\big(\mathfrak b(t)+\mathfrak B_1(t)\bX(t)+\mathfrak B_2(t)u(t)\big)\,\mathrm d\bm B(t)\quad\text{for all }t \in(0,T],\\
				\bX(0)&=\bX_\circ
			\end{aligned}
			\right.
		\end{align}
		possesses a unique solution $\bX\in\X$ for every $u\in\U$ and  every $\mathscr F_\circ$-measurable $\bX_\circ\in L^2(\Omega;\Rb^d)$; cf. \cite[Theorem 6.14]{YongZhou1999StochasticControls}.
		\item Notice that \cref{A:LinSDE} implies \cref{Assumption:coefficients}. 
		Thus, the estimates of \cref{Theorem:Apriori} and \cref{Lemma:ExUni_Uad} are satisfied also in the linear setting.
		\item \label{item:linearSDE_propertiesSolutionOperator} Suppose that \cref{Assumption:coefficients,A:LinSDE} hold. 
		For given initial condition $\bX_\circ:\Omega\to\Rb^d$ let $\hat\bX\in\X$ be the unique solution to the linear SDE
		\begin{align}
			\mathrm d\hat\bX(t)=\mathfrak a(t)\,\mathrm dt+\mathfrak b(t)\,\mathrm d\bm B(t)\text{ for all }t \in(0,T],\quad\hat\bX(0)=\bm X_\circ.
			\label{SDElin}
		\end{align}
        Moreover, for given control $u\in\Uad$ we denote by $\bX_\mathsf h\in\X_\circ$ the unique solution to
		\begin{align}
			\left\{
			\begin{aligned}
				\mathrm d\bX_\mathsf h(t)&=\big(\mathfrak A_1(t)\bX_\mathsf h(t)+\mathfrak A_2(t)u(t)\big)\,\mathrm dt\\
				&\quad+\big(\mathfrak B_1(t)\bX_\mathsf h(t)+\mathfrak B_2(t)u(t)\big)\,\mathrm d\bm B(t)\quad\text{for all }t \in(0,T],\\
				\bX_\mathsf h(0)&=0,
			\end{aligned}
			\right.
			\label{SDEhom}
		\end{align}
        where we set $\X_\circ=\{\bY\in\X \; | \; \bY(0)=0\text{ in }L^2(\Omega;\Rb^d)\big\}$.
		Then the solution operator $\mathcal S_\mathsf h:\Uad\to\X_\circ$, where $\bX_\mathsf h=\mathcal S_\mathsf h(u)$ solves \eqref{SDEhom}, is well defined, linear and bounded. 
		Moreover, $\mathcal S(u)=\hat\bX+\mathcal S_\mathsf h(u)$ solves \eqref{SDE} with the setting \eqref{SettingLinear}.
	\end{enumerate}
\end{remark}

\begin{theorem}
	\label{Th:OptExSol}
	Let \cref{Assumption:coefficients,Assumption:continuity_C,A:LinSDE} hold. 
	Then there exists a (global) optimal solution $\bar u\in\Uad$ to \eqref{eq:SDE_Opti_OU_red}.
\end{theorem}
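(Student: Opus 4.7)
I would prove \cref{Th:OptExSol} by the direct method of the calculus of variations, exploiting the linearity structure provided by \cref{A:LinSDE} to pass from strong continuity of the solution operator to weak continuity.

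\textbf{Step 1 (minimizing sequence).} First I would observe that $\hat J$ is bounded from below on $\Uad$: by \cref{Assumption:continuity_C}, $j$ and $j_T$ are bounded from below, and $\tfrac{\kappa}{2}\|u\|_\U^2\ge 0$. Hence $\inf_{u\in\Uad}\hat J(u)\in\Rb$ and I can choose a minimizing sequence $\{u_n\}\subset\Uad$ with $\hat J(u_n)\to\inf_{\Uad}\hat J$.

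\textbf{Step 2 (weak compactness of $\Uad$).} The set $\Uad$ is non-empty, convex, and closed in the Hilbert space $\U$ (see \cref{rem:Uad_nonemptyConvexClosed}); hence it is weakly sequentially closed. Moreover, the pointwise bound $\ua\le u(t)\le\ub$ yields a uniform bound $\|u\|_\U^2\le T\,\max\{\|\ua\|_{\Rb^r}^2,\|\ub\|_{\Rb^r}^2\}$, so $\Uad$ is bounded in $\U$. By the Eberlein–\v Smulian theorem I can extract a subsequence (not relabeled) with $u_n\rightharpoonup \bar u\in\Uad$ in $\U$.

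\textbf{Step 3 (weak convergence of the states).} This is the key place where linearity enters. By \cref{Rem:LinSDE}\ref{item:linearSDE_propertiesSolutionOperator}, the solution operator decomposes as $\Sm(u)=\hat{\bX}+\Sm_\mathsf h(u)$, where $\Sm_\mathsf h:\U\to\Xc$ is linear and bounded (and thus Lipschitz continuous from $\U$ to $\X$ by \cref{Lemma:ExUni_Uad}). A linear bounded operator between Hilbert spaces is weakly continuous, so $\bX_n\colonequals \Sm(u_n)\rightharpoonup \bar{\bX}\colonequals \Sm(\bar u)$ in $\X$. Since by \cref{Le:ExtXb} the evaluation $\X\to L^2(\Omega;\Rb^d),\ \bY\mapsto\bY(T),$ is linear and continuous, it is weakly continuous, and hence $\bX_n(T)\rightharpoonup \bar{\bX}(T)$ in $L^2(\Omega;\Rb^d)$.

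\textbf{Step 4 (weak lower semicontinuity and conclusion).} Because $j:\X\to\Rb$ and $j_T:L^2(\Omega;\Rb^d)\to\Rb$ are convex and lower semicontinuous (\cref{Assumption:continuity_C}), they are weakly sequentially lower semicontinuous on the respective Hilbert spaces. The mapping $u\mapsto\tfrac{\kappa}{2}\|u\|_\U^2$ is also weakly sequentially lower semicontinuous. Combining these with Steps 2–3 yields
\begin{align*}
\hat J(\bar u) &= j(\bar{\bX})+j_T(\bar{\bX}(T))+\tfrac{\kappa}{2}\|\bar u\|_\U^2 \\
&\le \liminf_{n\to\infty}\Big( j(\bX_n)+j_T(\bX_n(T))+\tfrac{\kappa}{2}\|u_n\|_\U^2\Big)
= \liminf_{n\to\infty}\hat J(u_n) = \inf_{u\in\Uad}\hat J(u),
\end{align*}
so $\bar u\in\Uad$ is a global minimizer.

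The main obstacle is \textbf{Step 3}: the solution operator $\Sm$ is only Lipschitz continuous for general coefficients (\cref{Lemma:ExUni_Uad}), which is insufficient to pass $u_n\rightharpoonup\bar u$ into $\Sm$. It is precisely the affine structure in \cref{A:LinSDE} that upgrades strong to weak continuity; dropping linearity would require a compactness argument (e.g., a compact embedding of $\Uad$ or structural convexity of $u\mapsto\Sm(u)$) that is not available in the present functional framework.
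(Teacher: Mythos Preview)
Your proof is correct and takes a genuinely different route from the paper's. The paper also starts with a minimizing sequence and a weakly convergent subsequence $u^k\rightharpoonup\bar u$, but instead of invoking weak continuity of the affine map $\Sm$ and weak lower semicontinuity of $j,j_T$, it applies Mazur's theorem to produce convex combinations $\tilde u^k\to\bar u$ \emph{strongly} in $\U$; strong Lipschitz continuity of $\Sm$ (\cref{Lemma:ExUni_Uad}) then gives $\tilde\bX^k\to\bar\bX$ strongly in $\X$, and linearity is used only to identify $\tilde\bX^k=\sum_i\alpha_{ik}\bX^i$ so that convexity of $j,j_T$ can be applied to this convex combination. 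The paper then shows $\sum_i\alpha_{ik}\hat J(u^i)\to\bar J$ by an $\varepsilon$-argument. Your approach is more streamlined: you use linearity once, to upgrade strong continuity of $\Sm_\mathsf h$ to weak--weak continuity, and then invoke the standard fact that convex strongly l.s.c.\ functionals on a Hilbert space are weakly sequentially l.s.c.; this avoids Mazur's theorem and the explicit handling of convex combinations. Both arguments exploit the same two ingredients---affinity of $\Sm$ and convexity of $j,j_T$---but your packaging is closer to the textbook direct method, while the paper's Mazur argument makes the role of convexity of the cost more explicit and would adapt more transparently if one wanted to weaken the topology on $\X$.
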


\begin{proof}
	We follow the main ideas of the proofs in \cite[Theorem 5.2, Chapter 2]{YongZhou1999StochasticControls}. 
	First, we observe that the reduced cost $\hat J$ is bounded from below. Thus, there exists a minimizing sequence $\{u^k\}_{k\in\Nb}\subset\Uad$ such that
	\begin{align}
		\label{MinSeq}
		-\infty< \bar J=\inf\big\{\hat J(u)\,\vert\,u\in\Uad\big\}=\lim_{k\to\infty}\hat J(u^k)<\infty.
	\end{align}
	We set $\bX^k=\Sm(u^k)$ for $k\in\Nb$. 
	Due to \cref{Rem:LinSDE}-\ref{item:linear_uniqueSolution}, the sequence $\{\bX^k\}_{k\in\Nb}\subset\X$ is well defined. 
	Since $\Uad$ is bounded, the sequence $\{u^k\}_{k\in\mathbb N}$ is bounded as well. 
	Hence, there exists a subsequence (which is still labeled by $u^k$) $\{u^{k}\}_{k\in\Nb}\subset\Uad$ and an element $\bar u\in\Uad$ such that $u^{k}\rightharpoonup\bar u$ in $\U$ as $k\to\infty$. We set $\bar X=\mathcal S(\bar u)$. 
	By Mazur's theorem \cite[Lemma 10.19]{RenardyRogers2004IntroPDE}, we get the existence of a function $N:\Nb \rightarrow \Nb$ and a sequence of sets of real numbers $\{\alpha_{ik}\}_{i=k}^{N(k)}$ such that the sequence of convex combinations
	\begin{align}
		\label{XtildeConv}
		\widetilde{u}^k 
		\colonequals \sum_{i = k}^{N(k)} \alpha_{ik}u^{i} \in\Uad
		\qquad\qquad
		\text{with } \alpha_{ik} \geq 0\text{ and }\sum_{i = k}^{N(k)} \alpha_{ik} =1
	\end{align}
	converges strongly to $\bar{u}$ in $\U$, i.e.
	\begin{align}
		\widetilde{u}^k\to\bar u\quad\text{in }\U.
		\label{eq:convergence_v^k_u}
	\end{align}
	Since the set $\Uad$ is convex and closed, we have $\bar{u} \in \Uad$. Furthermore, if $\widetilde{\bX}^k$ is the state corresponding to the control $\widetilde{u}^k$, then we have the strong convergence (see \cref{Theorem:ExUni} and \cref{Lemma:ExUni_Uad})
	\begin{align}
		\label{ConvXtilde}
		\widetilde{\bX}^k\to\bar{\bX}\quad\text{ in }\X.
	\end{align}
	Using \cref{Rem:LinSDE}-\ref{item:linearSDE_propertiesSolutionOperator}, we find that
	\begin{align}
		\label{X_k}
		\begin{aligned}
			\widetilde{\bX}^k&=\mathcal S\big(\widetilde{u}^k\big)=\hat\bX+\mathcal S_\mathsf h\big(\widetilde{u}^k\big)=\hat\bX+\mathcal S_\mathsf h\bigg(\sum_{i = k}^{N(k)} \alpha_{ik}u^{i}\bigg)=\sum_{i = k}^{N(k)} \alpha_{ik}\big(\hat\bX+\mathcal S_\mathsf h(u^{i})\big)\\
			&=\sum_{i=k}^{N(k)} \alpha_{ik}\bX^i.            
		\end{aligned}
	\end{align}
	Due to \cref{Assumption:continuity_C} both $j$ and $j_T$ are lower semicontinuous and convex. 
	Thus, we infer from \eqref{XtildeConv}, \eqref{ConvXtilde} and \eqref{X_k} that
	\begin{align}
		\label{CostCont1}
		\begin{aligned} 
			j(\bar\bX)+j_T(\bar\bX)&\le\lim_{k\to\infty}j\big(\widetilde{\bX}^k\big)+\lim_{k\to\infty}j_T\big(\widetilde{\bX}^k\big)\\
			&=\lim_{k\to\infty}
			j\bigg(\sum_{i=k}^{N(k)} \alpha_{ik}\bX^i\bigg)
			+ \lim_{k\to\infty}j_T\bigg(\sum_{i=k}^{N(k)} \alpha_{ik}\bX^i(T)\bigg)\\
			&\le\lim_{k\to\infty}\sum_{i=k}^{N(k)}\alpha_{ik}\left(j\big(\bX^{i}\big)+ j_T\big(\bX^{i}(T)\big))\right).
		\end{aligned}
	\end{align}
	Since $u\mapsto\|u\|_\U^2$ is continuous and convex in $\U$, we infer from \eqref{eq:convergence_v^k_u}
	\begin{align}
		\label{CostCont2}
		\begin{aligned}
			{\|\bar u\|}_\U^2&=\lim_{k\to\infty}{\|\widetilde{u}^k\|}_\U^2= \lim_{k\to\infty} \Big\|\sum_{i = k}^{N(k)} \alpha_{ik}u^{i}\Big\|_\U^2\leq  \lim_{k\to\infty}\sum_{i = k}^{N(k)} \alpha_{ik}\,{\| u^{i}\|}_\U^2.
		\end{aligned}
	\end{align}
	Combining \eqref{MinSeq}, \eqref{CostCont1} and \eqref{CostCont2}, we obtain
	\begin{align}
		\label{eq:existence_limit}
		\begin{aligned}
			\hat{J}(\bar{u})&=j\left(\bar{\bX}\right)+j_T\left(\bar{\bX}(T)\right) + \frac{\kappa}{2}\,{\|\bar{u}\|}^2_\U\\
			&\leq\lim_{k\to\infty}\sum_{i=k}^{N(k)}\alpha_{ik}\left(j\big(\bX^{i}\big)+ j_T\big(\bX^{i}(T)\big)+\frac{\kappa}{2}\,{\|u^{i}\|}_\U^2\right)=\lim_{k\to\infty} \sum_{i = k}^{N(k)} \alpha_{ik}\, \hat{J}(u^{i}).
		\end{aligned}
	\end{align}
	To pass to the limit in the last equality in \eqref{eq:existence_limit}, we perform the following estimates. 
	Due to \eqref{MinSeq} we have $\lim_{k\to\infty}\hat J(u^k)=\bar J$. 
	Thus, for every $\varepsilon>0$ there exists a $K=K(\varepsilon)\in\Nb$ such that $|\hat J(u^k)-\bar J|<\varepsilon$ for all $k\ge K$. 
	Moreover, it holds that $\sum_{i = k}^{N(k)} \alpha_{ik}=1$ and $0\le\alpha_{ik}\le1$ for $i=k,\ldots,N(k)$ and for any $k$. Consequently,
	\begin{align*}
		\sum_{i=k}^{N(k)}\alpha_{ik}\,\hat{J}(u^{i})
		&=\sum_{i=k}^{N(k)}\alpha_{ik}\,\bar J+\sum_{i=k}^{N(k)}\alpha_{ik}\,\big(\hat{J}(u^{i})-\bar J\big)
		\\
		&\le\bar J
		\sum_{i=k}^{N(k)}\alpha_{ik}
		+\sum_{i=k}^{N(k)}
		\alpha_{ik}\,
		\big|\hat{J}(u^{i})-\bar J\big|\le\bar J+\varepsilon\quad\text{for all }k\ge K
	\end{align*}
	and analogously
	\begin{align*}
		\sum_{i=k}^{N(k)}\alpha_{ik}\,\hat{J}(u^{i})&\ge\bar J\sum_{i=k}^{N(k)}\alpha_{ik}-\sum_{i=k}^{N(k)}\alpha_{ik}\,\big|\hat{J}(u^{i})-\bar J\big|\ge\bar J-\varepsilon\quad\text{for all }k\ge K.
	\end{align*}
	Since $\varepsilon>0$ is chosen arbitrarily, we conclude that $\lim_{k\to\infty}\sum_{i=k}^{N(k)}\alpha_{ik}\,\hat{J}(u^{i})=\bar J$ holds true. 
	Thus, it follows from \eqref{eq:existence_limit} that $\bar u$ solves \eqref{eq:SDE_Opti_OU_red}.
\end{proof}

\begin{remark}
	\label{rem:existence_general}
	Notice that the assumption of the linearity of the SDE is quite restrictive. However, it is in general not possible to prove existence of optimal controls for general SDE in the strong sense. On the contrary, one has to consider so-called \emph{relaxed controls} for which the probability space can not be fixed a-priori \cite{Buckdahn2010existenceStochasticControl, YongZhou1999StochasticControls}. In \cite{ElKarouiNguyen1987ExistenceOSCP}, the authors deal with the question of existence of optimal stochastic control for a quite general form of stochastic optimal controls problems with a state equation of the form \eqref{SDE}.
\end{remark}

Let us come back to the case when the control is time-independent and just a vector in $\U^\mathrm c=\Rb^k$; see \cref{rem:Uad_nonemptyConvexClosed,rem:linear_example}. In this case, the proof of the existence of optimal parameters can also be executed for a non-linear SDE.

\begin{theorem}
	\label{Th:OptExSol_calibration}
	Let \cref{Assumption:coefficients,Assumption:continuity_C} hold for $\U=\U^\mathrm c$ and $\Uad=\UadCalib$. 
	Then there exists a (global) optimal solution $\bar u\in \UadCalib$ to \eqref{eq:SDE_Opti_OU_red}.
\end{theorem}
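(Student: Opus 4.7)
The plan is to exploit the fact that in the calibration setting the admissible set $\UadCalib=[\ua,\ub]$ is a compact subset of the \emph{finite-dimensional} space $\U^{\mathsf c}=\Rb^r$. This will replace the weak-convergence plus Mazur's lemma argument of \cref{Th:OptExSol} by a direct strong-convergence argument and, in particular, will remove the need to assume linearity of the SDE, since strong convergence of the states will follow directly from Lipschitz continuity of $\Sm$.

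First, I take a minimizing sequence $\{u^k\}_{k\in\Nb}\subset\UadCalib$ for $\hat J$, with $\hat J(u^k)\to\bar J\colonequals\inf_{u\in\UadCalib}\hat J(u)\in\Rb$ (the boundedness from below follows from \cref{Assumption:continuity_C}). Since $\UadCalib$ is closed and bounded in $\Rb^r$, Bolzano-Weierstrass provides a subsequence (not relabeled) and a limit $\bar u\in\UadCalib$ with $u^k\to\bar u$ in $\Rb^r$. Viewing constant controls as constant functions in $L^2(0,T;\Rb^r)$, the convergence also takes place in the $\U$-norm (up to a factor $\sqrt T$), which is the setting in which the Lipschitz estimate of \cref{Lemma:ExUni_Uad} is phrased.

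Next, I invoke \eqref{Lipschitz3} in \cref{Lemma:ExUni_Uad}, which holds under \cref{Assumption:coefficients} alone and therefore does not rely on linearity of the SDE, to deduce that the corresponding states $\bX^k\colonequals\Sm(u^k)$ converge strongly in $\X$ to $\bar\bX\colonequals\Sm(\bar u)$. By the continuous embedding $\X\hookrightarrow C([0,T];L^2(\Omega;\Rb^d))$ from \cref{Le:ExtXb}, this also yields $\bX^k(T)\to\bar\bX(T)$ in $L^2(\Omega;\Rb^d)$. Using the lower semicontinuity of $j$ on $\X$ and of $j_T$ on $L^2(\Omega;\Rb^d)$ from \cref{Assumption:continuity_C}, together with continuity of $u\mapsto\|u\|_{\U^{\mathsf c}}^2$ on $\Rb^r$, I pass to the limit inferior to conclude
\begin{align*}
\hat J(\bar u)=j(\bar\bX)+j_T(\bar\bX(T))+\frac{\kappa}{2}\,\|\bar u\|_{\U^{\mathsf c}}^2\le\liminf_{k\to\infty}\hat J(u^k)=\bar J,
\end{align*}
which, combined with $\bar u\in\UadCalib$, shows that $\bar u$ is a global minimizer.

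The main qualitative difference from \cref{Th:OptExSol} is that finite-dimensional compactness of $\UadCalib$ delivers \emph{strong} convergence of the controls with no convex-combination step, so the only ingredients that enter the limit passage are the Lipschitz continuity of $\Sm$ (valid for the general nonlinear SDE) and the lower semicontinuity of $j$ and $j_T$. In particular, convexity of $j,j_T$ and linearity of the SDE play no role here, and I do not anticipate any genuine technical obstacle; the only point requiring some care is making explicit the identification of the constant control $u\in\Rb^r$ with an element of $L^2(0,T;\Rb^r)$ so that \cref{Lemma:ExUni_Uad} is directly applicable.
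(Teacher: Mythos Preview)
Your proposal is correct and follows essentially the same approach as the paper: a minimizing sequence, compactness of $\UadCalib\subset\Rb^r$ to extract a strongly convergent subsequence, Lipschitz continuity of $\Sm$ (\cref{Lemma:ExUni_Uad}) to obtain strong convergence of the states, and lower semicontinuity of $j,j_T$ to pass to the limit. The only cosmetic differences are that the paper cites \cref{Theorem:Apriori} directly for the convergence of $\bX^k(T)$ rather than going through the embedding of \cref{Le:ExtXb}, and that your use of $\liminf$ is slightly more careful than the paper's $\lim$.
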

\begin{proof}
	First, notice as in the proof of \cref{Th:OptExSol} that the reduced cost $\hat J$ is non-negative and the existence of a minimizing minimizing sequence $\{u^k\}_{k\in\Nb}\subset\UadCalib$. 
	We use again the notation $\bX^k=\Sm(u^k)$ for $k\in\Nb$. 
	Due to \cref{Theorem:ExUni}, the sequence $\{\bX^k\}_{k\in\Nb}\subset\X$ is well defined. 
	Since $\UadCalib$ is compact, there exists a subsequence of the minimizing sequence (which is still labeled by $u^k$) $\{u^{k}\}_{k\in\Nb}\subset\UadCalib$ and an element $\bar u\in\UadCalib$ such that we have the strong convergence $u^{k}\rightarrow\bar u$ in $\U$ as $k\to\infty$. We set $\bXb=\Sm(\bar u)\in\X$. It follows from \cref{Theorem:Apriori} that $\bX^k(T) \to \bXb(T)$ in $L^2(\Omega,\Rb^d)$ as $k\to\infty$. Due to \cref{Lemma:ExUni_Uad} we have  $\bX^k \to \bXb$ in $\X$ as $k\to\infty$. Now, with the notation of \eqref{MinSeq} and \cref{Assumption:continuity_C}, it holds that
	\begin{align*}
		\hat{J}(\bar{u})&=j\left(\bXb\right)+j_T\left(\bXb(T)\right) + \frac{\kappa}{2}\,{\|\bar{u}\|}^2_\U
		\leq\lim_{k\to\infty}\big( j\left(\bX^{k}\right)+j_T(\bX^{k}(T))\big)+\frac{\kappa}{2}\,{\|u^{k}\|}_\U^2\big)\\
		&= \inf\{\hat J(u)\,\big|\,u \in \UadCalib\big\}.
	\end{align*}
\end{proof}


\section{Discretization of the optimization problem}
\label{sec:Discretization}

In this section, we introduce a discretization for \eqref{eq:SDE_Opti_OU} and derive the associated first-order necessary optimality system which is the basis for a gradient-based method utilized for our numerical tests in \cref{sec:Numerical_experiments}. 
For that reason, the following hypotheses are required.

\begin{assumption}
	\label{Assumption:Diffbarkeit}
	\begin{enumerate}[label=\text{\textup{\arabic*)}}]
		\item \label{AssumptionCost} The mapping $\Cm:\Rb^d\to\Rb^\ell$ is continuously differentiable and the function $\cd:[0,T]\to \Rb^\ell$  is continuous.
		\item \label{AssumptionState} The functions $a\colon \Rb^d\times \Rb^r \times [0,T]\to \Rb^d$ and $b\colon \Rb^d\times \Rb^r \times [0,T]\to \Rb^{d\times m}$ are continuous and satisfy \cref{Assumption:coefficients}. Moreover,
		\begin{align*}
			\Rb^d&\ni x\mapsto a(x,u(t),t)\in\Rb^d,&\Rb^d&\ni x\mapsto b(x,u(t),t)\in\Rb^{d \times m}&&(\text{for any }u\in\Uad),\\
			\Rb^r&\ni u\mapsto a(x,u,t)\in\Rb^d,&\Rb^r&\ni u\mapsto b(x,u,t)\in\Rb^{d\times m}&&(\text{for any }x\in\Rb^d)
		\end{align*}
		are continuously differentiable for all $t\in[0,T]$.
	\end{enumerate}
	
\end{assumption}

We remark that if \cref{Assumption:Diffbarkeit}-2) holds, then \eqref{eq:LipCondition} holds for all $t\in[0,T]$, by continuity of $a$ and $b$. In a first discretization step, we consider a semidiscrete version of the calibration problem and prove a convergence result. For the numerical implementation, we then follow \cite{Kaebe2009AdjointMonteCarloCalibra} and apply a Monte-Carlo discretization for \eqref{SDE} to get a discretized and high-dimensional, but now deterministic constrained optimization problem. 
Then, we can apply a Lagrangian framework to get optimality conditions.

\subsection{The semidiscrete calibration problem}
\label{sec:D6.1}

In this section, we study the calibration problem (cf. \cref{rem:Uad_nonemptyConvexClosed}-\ref{item:remark_calibration_example}), where the control space $\Uc=\Rb^r$ is already finite-dimensional and needs not to be discretized:
\begin{align}
	\label{OptCal}
	\min \hat J(u)=J(\Sm(u),u)\quad\text{s.t.}\quad u\in\UadCalib=[\ua,\ub]\subset\Uc,
\end{align}
where $\bX=\Sm(u)\in\X$ is the solution to \eqref{SDE-Cal}. In order to keep the presentation clear and concise, we restrict ourselves in this section to the form of the functional given in \eqref{eq:objective_calib}. 
However, all results can be extended to functionals that satisfy \cref{Assumption:continuity_C} and \cref{Assumption:Diffbarkeit}.

For the semidiscrete approximation of \eqref{SDE-Cal}, we  split the time interval $[0,T]$ in $N$ intervals of equal length $\Delta t=T/N$. We set 
\begin{align*}
	t_\nu \colonequals \nu\Delta t \quad\text{for }\nu=0,\ldots,N
\end{align*}
and define an elementary time interval as
\begin{align*}
	\mathscr I_\nu\colonequals [t_\nu, t_{\nu+1}]= [t_\nu, t_\nu+\Delta t ]\subset[0,T]\quad\text{for }\nu=0,\ldots,N-1.
\end{align*}
We will approximate continuous functions $f\colon [0,T]\to \Rb^d$ by step functions of the form
\begin{align*}
	\sum_{\nu=0}^{N-1} f(t_\nu) \chi_{\mathscr I^\nu}(t),
\end{align*}
where  $\chi_{\mathfrak{S}}$ denotes  the indicator function for the set $\mathfrak{S}$, i.e.
\begin{equation}
	\label{def:indicator_function}
	\begin{aligned}
		\chi_{\mathfrak{S}}(\mathfrak{s}) \colonequals\begin{cases}
			1 \qquad &\text{if } \mathfrak{s} \in \mathfrak{S},\\
			0 \qquad &\text{otherwise}.
		\end{cases}
	\end{aligned}
\end{equation}

Following, e.g., \cite{Higham2001IntroNumSDE}, we discretize \eqref{SDE-Cal} with respect to time and obtain an explicit semidiscrete scheme of Euler--Maruyama type. For $u\in \UadCalib$, we define $\bX^N=\{\bX_\nu^N\}_{\nu=0}^{N}\subset L^2(\Omega;\Rb^d)$ as the solution of
\begin{equation}
	\begin{aligned}
		\label{SDEDiscNeu}
		\bX^N_{\nu+1}&=\bX^N_\nu +a(\bX^N_\nu,u,t_\nu)\,\Delta t+b(\bX^N_\nu,u,t_\nu)\,\Delta\bB_\nu\quad\text{for }\nu=0,\ldots,N-1,\\
		\bX_0^N&= \bX_\circ,
	\end{aligned}    
\end{equation}
where $\Delta \bB_\nu\colonequals\bB(t_{\nu+1})-\bB(t_\nu)\in L^2(\Omega;\Rb^m)$. 
Note that  $(\Delta\bB_\nu)_i\sim\mathcal N(0,\Delta t)$ for $1\le i\le m$. Here, $\mathcal{N}(0,\Delta t)$ defines a normal distribution with mean $0$ and standard deviation $\Delta t^{1/2}$. 
For the explicit realization of sampling Brownian motion increments, we refer to \cref{sec:Num_approx}.
As the scheme \eqref{SDEDiscNeu} is explicit, there exists a unique solution $\bX^N$, and $\bX^N_\nu$ is $\Fs_\circ(t_\nu)$-measurable for $\nu=0,\dots,N$. 
We introduce the discrete solution space 
\begin{align*}
	\X^N\colonequals\big\{\bY^N = \{\bY_\nu^N\}_{\nu=0}^{N}\,\big|\,\bY_\nu^N\in L^2(\Omega;\Rb^d)\big\} = \big(L^2(\Omega;\Rb^d)\big)^{N+1}
\end{align*}
with norm
\begin{align*}
	{\|\bY^N\|}_{\X^N} \colonequals \max_{\nu=0,\dots,N} {\|\bY^N_\nu\|}_{L^2(\Omega,\Rb^d)}
\end{align*}
and the discrete solution operator 
\begin{align*}
	\Sm^N:\UadCalib\to\X^N,\quad u\mapsto \Sm^N(u) \colonequals \bX^N,
\end{align*}
where $\bX^N$ denotes the solution of \eqref{SDEDiscNeu}.

The following result is the time-discrete analogon of \cref{Theorem:Apriori}.

\begin{lemma}
	\label{Lemma:semidiscrete-a-priori}
	Let \cref{Assumption:Diffbarkeit}-\ref{AssumptionState} hold,  and let $\bX_\circ\in L^2(\Omega;\Rb^d)$ be an $\Fs_\circ$-measurable random variable. Then, for all $u,\tilde u\in \UadCalib$, we have 
	\begin{align}
		\label{Lipschitz1-semi}
		{\| \Sm^N(u)-\Sm^N(\tilde u)\|}_{\X^N} & \le C_1\,{\|u-\tilde u\|}_{\Uc},
        \\
		\label{Lipschitz2-semi}
		{\|\Sm^N(u)\|}_{\X^N}&\le C_2
	\end{align}
	with non-negative constants $C_1 = C_1(L_U,T)$ and $C_2 = C_2(L_U,T,\bX_\circ)$ independent of $N$.
\end{lemma}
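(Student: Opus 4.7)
The plan is to mirror the continuous-time proof of \cref{Theorem:Apriori}, replacing the integral form of Gronwall's inequality by its discrete counterpart and the It\^o isometry by an elementary discrete analog. The key observation is that $\bX^N_\nu$ is $\Fs_\circ(t_\nu)$-measurable by construction of the explicit scheme \eqref{SDEDiscNeu}, while $\Delta\bB_\nu$ is independent of $\Fs_\circ(t_\nu)$ with mean zero and covariance $\Delta t\,I_m$. Conditioning on $\Fs_\circ(t_\nu)$ therefore kills the cross terms between deterministic and stochastic increments and yields
\begin{equation*}
\Eb\bigl[\|B\,\Delta\bB_\nu\|_{\Rb^d}^2\bigr]
= \Delta t\,\Eb\bigl[\|B\|_\Fr^2\bigr]
\end{equation*}
for any $\Fs_\circ(t_\nu)$-measurable matrix-valued $B$; this plays the role of the It\^o isometry in the discrete setting.

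For \eqref{Lipschitz1-semi}, I would fix $u,\tilde u\in\UadCalib$, set $\bX^N=\Sm^N(u)$ and $\tilde\bX^N=\Sm^N(\tilde u)$, and study $\bm E_\nu\colonequals\bX^N_\nu-\tilde\bX^N_\nu$. Subtracting the two recursions in \eqref{SDEDiscNeu}, taking squared norms in $\Rb^d$, and applying the identity above together with the Lipschitz bound of \cref{Assumption:coefficients} (plus Young's inequality to absorb the $\Delta t^2$ contribution of the drift) produces a recursion of the form
\begin{equation*}
\Eb\bigl[\|\bm E_{\nu+1}\|_{\Rb^d}^2\bigr]
\le (1+c_1\Delta t)\,\Eb\bigl[\|\bm E_\nu\|_{\Rb^d}^2\bigr]
+ c_1\,\Delta t\,\|u-\tilde u\|_{\Uc}^2
\end{equation*}
with $c_1=c_1(L_U,T)$ independent of $N$. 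Since $\bm E_0=0$, the discrete Gronwall lemma gives $\Eb[\|\bm E_\nu\|_{\Rb^d}^2]\le(e^{c_1T}-1)\|u-\tilde u\|_{\Uc}^2$ uniformly in $\nu\in\{0,\dots,N\}$, and taking the maximum over $\nu$ yields \eqref{Lipschitz1-semi}.

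The bound \eqref{Lipschitz2-semi} would be obtained by the same strategy, now applied directly to the recursion for $\bX^N_\nu$. Using the linear-growth consequence of the Lipschitz condition, namely $\|a(x,u,t)\|_{\Rb^d}^2\le 2L_U^2\|x\|_{\Rb^d}^2+2\|a(0,u,t)\|_{\Rb^d}^2$ and its analog for $b$, I would arrive at
\begin{equation*}
\Eb\bigl[\|\bX^N_{\nu+1}\|_{\Rb^d}^2\bigr]
\le (1+c_2\Delta t)\,\Eb\bigl[\|\bX^N_\nu\|_{\Rb^d}^2\bigr]
+ c_2\,\Delta t\,\bigl(\|a(0,u,t_\nu)\|_{\Rb^d}^2+\|b(0,u,t_\nu)\|_\Fr^2\bigr).
\end{equation*}
Discrete Gronwall then gives the claim, provided the Riemann sum $\sum_{\nu=0}^{N-1}\Delta t\,(\|a(0,u,t_\nu)\|_{\Rb^d}^2+\|b(0,u,t_\nu)\|_\Fr^2)$ is controlled uniformly in $N$; this follows from the continuity of $a,b$ granted by \cref{Assumption:Diffbarkeit}-\ref{AssumptionState} on the compact set $\{0\}\times[\ua,\ub]\times[0,T]$, in combination with the integrability bound \eqref{Assumption:coefficients_integral}.

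The main technical point, and the reason the constants come out independent of $N$, is the discrete It\^o isometry above, which hinges on the $\Fs_\circ(t_\nu)$-measurability of the coefficients in the explicit Euler--Maruyama update. Once this structural fact is in place, the rest is routine bookkeeping and the discrete Gronwall step mirrors the continuous case in \cref{Theorem:Apriori} verbatim.
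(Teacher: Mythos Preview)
Your proposal is correct. Both your argument and the paper's reach the same conclusion via a discrete Gronwall step, but the organization differs. The paper telescopes the recursion to write $\bX^N_{\nu+1}-\tilde\bX^N_{\nu+1}$ as a full sum $\sum_{l=0}^\nu(\cdots)$, then bounds the squared norm of that sum directly; this requires a discrete It\^o isometry for the \emph{whole} stochastic sum, and the paper spends some effort showing that all cross terms $\Eb[\Delta B_{li}G_l^{ji}G_\ell^{jk}\Delta B_{\ell k}]$ with $l\ne\ell$ or $i\ne k$ vanish by independence. Your one-step recursion avoids that multi-index computation entirely: you only ever need the single-step identity $\Eb[\|B\,\Delta\bB_\nu\|_{\Rb^d}^2]=\Delta t\,\Eb[\|B\|_\Fr^2]$ and the vanishing of the single cross term, both of which follow immediately from conditioning on $\Fs_\circ(t_\nu)$. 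The price you pay is an extra Young-type inequality to separate $\bm E_\nu$ from the drift increment, which the paper does not need. For the boundedness estimate \eqref{Lipschitz2-semi}, both arguments coincide in spirit, including the appeal to continuity of $a(0,u,\cdot)$ and $b(0,u,\cdot)$ on $[0,T]$ to control the Riemann sum uniformly in $N$. Overall your route is the more standard textbook derivation for Euler--Maruyama moment bounds and is slightly shorter; the paper's telescoping version mirrors the integral structure of the continuous-time proof of \cref{Theorem:Apriori} more literally.
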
 

\begin{proof}
	Let $u, \tilde u\in \UadCalib$, and set $\bX^N\colonequals\Sm^N(u)$ and $\bXt^N\colonequals\Sm^N(\tilde u)$. We consider  the differences $\bm\Delta^N_\nu\colonequals\bX^N_\nu-\bXt_\nu^N\in L^2(\Omega;\Rb^d)$, $0\le\nu\le N$. Obviously, we have $\bm \Delta^N_0=0$. Iteratively, we obtain
	\begin{align*}
		\bm\Delta^N_{\nu+1}&=\bm\Delta^N_\nu +\big(a(\bX^N_\nu,u,t_\nu)-a(\bXt^N_\nu,\tilde u,t_\nu)\big)\,\Delta t+\big(b(\bXt
		^N_\nu,u,t_\nu)-b(\bXt^N_\nu,\tilde u,t_\nu)\big)\,\Delta\bB_\nu\\
		&=\sum_{l=0}^\nu\Big(\big(a(\bX^N_l,u,t_l)-a(\bXt^N_l,\tilde u,t_l)\big)\,\Delta t+\big(b(\bX
		^N_l,u,t_l)-b(\bXt^N_l,\tilde u,t_l)\big)\,\Delta\bB_l\Big)
	\end{align*} 
	for $\nu=0,\ldots,N-1$. From this, we see that
	\begin{equation}\label{eq-5-7}
		\begin{aligned}
			\Eb\big[{\|\bm\Delta^N_{\nu+1}\|}_{\Rb^d}^2\big]&\le 2\,\Eb\bigg[\Big\|\sum_{l=0}^\nu\big(a(\bX^N_l,u,t_l)-a(\bXt^N_l,\tilde u,t_l)\big)\,\Delta t\Big\|_{\Rb^d}^2\bigg]\\
			&\quad+2\, \Eb\bigg[\Big\|\sum_{l=0}^\nu\big(b(\bX
			^N_l,u,t_l)-b(\bXt^N_l,\tilde u,t_l)\big)\,\Delta\bB_l\Big\|_{\Rb^d}^2\bigg]
		\end{aligned}
	\end{equation}
	for $\nu=0,\ldots,N-1$. With \cref{Assumption:Diffbarkeit}-\ref{AssumptionState}, $\Delta t = T/N$ and $\sum_{l=0}^\nu 1\le N$, we get
	\begin{align*}
		&\Eb\bigg[\Big\|\sum_{l=0}^\nu\big(a(\bX^N_l,u,t_l)-a(\bXt^N_l,\tilde u,t_l)\big)\,\Delta t\Big\|_{\Rb^d}^2\bigg]\\
		& \qquad \le(\Delta t)^2\,\Eb\bigg[\Big(\sum_{l=0}^\nu\big\|a(\bX^N_l,u,t_l)-a(\bXt^N_l,\tilde u,t_l)\big\|_{\Rb^d}\Big)^2\bigg]\\
		&\qquad\le(\Delta t)^2L_U^2\Eb\bigg[\Big(\sum_{l=0}^\nu\big({\|\bm\Delta_l^N\|}_{\Rb^d}+{\|u-\tilde u\|}_\Uc\big)\Big)^2\bigg]\\
		& \qquad\le\Delta tL_U^2T\,\Eb\bigg[\sum_{l=0}^\nu\big({\|\bm\Delta_l^N\|}_{\Rb^d}+{\|u-\tilde u\|}_\Uc\big)^2\bigg]\\
		&\qquad\le2L_U^2T\bigg(\Delta t\sum_{l=0}^\nu\Eb\big[{\|\bm\Delta_l^N\|}_{\Rb^d}^2\big]+{\|u-\tilde u\|}_\Uc^2\bigg).
	\end{align*}
	For the second expectation on the right-hand side of \eqref{eq-5-7},  
	we set
	\begin{align*}
		G_l = (G_l^{ji})_{j=1,\dots,d,\,i=1,\dots,m} \colonequals b(\bX
		^N_l,u,t_l)-b(\bXt^N_l,\tilde u,t_l) \quad \text{for }l=0,\dots,\nu
	\end{align*}
	and get (cf. also \cite[Subsection~4.2.2]{Evans2013SDE})
	\begin{align*}
		&\Eb\bigg[\Big\|\sum_{l=0}^\nu G_l\,\Delta\bB_l\Big\|_{\Rb^d}^2\bigg]=\Eb\bigg[\sum_{l,\ell=0}^\nu\Big(G_l\Delta\bB_l\Big)^\top\Big(G_\ell\,\Delta\bB_\ell\Big)\bigg]=\sum_{l,\ell=0}^\nu\Eb\Big[\Delta\bB_l^\top G_l^\top 
		G_\ell\,\Delta\bB_\ell\Big]\\
		&\quad=\sum_{l,\ell=0}^\nu\sum_{i=1}^m\sum_{j=1}^d\sum_{k=1}^m\Eb\Big[\Delta B_{li}G_l^{ji}G_\ell^{jk}\,\Delta B_{\ell k}\Big].
	\end{align*}
	If $\ell>l$, then $\Delta B_{\ell k}$ is independent of $\Delta B_{li}G_l^{ji}G_\ell^{jk}$, and we get 
	\[ \Eb\Big[\Delta B_{li}G_l^{ji}G_\ell^{jk}\,\Delta B_{\ell k}\Big] 
	= \Eb\Big[\Delta B_{li}G_l^{ji}G_\ell^{jk}\Big]\, \Eb\Big[\Delta B_{\ell k}\Big]=0.\]
	In the same way, the expectation vanishes for $\ell<l$ and for $i\not=k$. With this and $\Eb[ (\Delta B_{li})^2]= t_{i+1}-t_i = \Delta t$, we obtain
	\begin{align*}
		\Eb\bigg[\Big\|\sum_{l=0}^\nu G_l\,\Delta\bB_l\Big\|_{\Rb^d}^2\bigg]&=\sum_{l=0}^\nu \sum_{i=1}^m\sum_{j=1}^d\Eb\big[(G_l^{ji})^2\big]\Eb\big[ (\Delta B_{li})^2\big]=\Delta t\sum_{l=0}^\nu\Eb\bigg[\sum_{i=1}^m\sum_{j=1}^d(G_l^{ji})^2\bigg]\\
		&=\Delta t\sum_{l=0}^\nu\Eb\big[{\|G_l\|}_\Fr^2\big].
	\end{align*}
	From the definition of $G_l$ and the Lipschitz continuity of $b$ (see \cref{Assumption:Diffbarkeit}-\ref{AssumptionState}) it follows that
	\begin{align*}
		\Eb\bigg[\Big\|\sum_{l=0}^\nu\big(b(\bX
		^N_l,u,t_l)-b(\bXt^N_l,\tilde u,t_l)\big)\,\Delta\bB_l\Big\|_{\Rb^d}^2\bigg]\le2 L_U^2\bigg(\Delta t\sum_{l=0}^\nu\Eb\big[{\|\bm\Delta_l^N\|}_{\Rb^d}^2\big]+{\|u-\tilde u\|}_\Uc^2\bigg).
	\end{align*}
	Inserting this into \eqref{eq-5-7} yields
	\begin{align*}
		\Eb\big[{\|\bm\Delta^N_{\nu+1}\|}_{\Rb^d}^2\big]\le c_1\Delta t\sum_{l=0}^\nu\Eb\big[{\|\bm\Delta_l^N\|}_{\Rb^d}^2\big]+c_1\,{\|u-\tilde u\|}_\Uc^2\quad\text{for } \nu=0,\dots,N-1
	\end{align*}
	with $c_1\colonequals 4(T+1)L_U^2$. Now an application of the discrete Gronwall inequality (see \cite[Theorem 3.2]{ConlanWang1992ContDiscGronwall}) gives the desired estimate
	\begin{align*}
		\max_{\nu=0,\dots,N} \Eb\big[{\|\bX_\nu^N-\bXt_\nu^N\|}_{\Rb^d}^2\big]\le C_1\,{\|u-\tilde u\|}_\Uc^2
	\end{align*}
	with a constant $C_1 = C_1(T, L_U)$, which shows \eqref{Lipschitz1-semi}. 
	The proof of \eqref{Lipschitz2-semi} follows the same lines, writing
	\begin{align*}
		a(\bX^N_l, u, t_l) = \Big( a(\bX^N_l, u, t_l)- a(0, u, t_l)\Big) + a(0, u, t_l)
	\end{align*}
	(see the proof of \cref{Theorem:Apriori}). We obtain
	\begin{align*}
		\Eb\big[\|\bX^N_\nu\|^2\big]\le c_2 \Delta t \sum_{l=0}^\nu \Eb\big[\|\bX^N_l\|^2\big]+c_2 + c_2 \sum_{l=0}^{N-1}  \Delta t \Big( {\|a(0,u,t_l)\|}_{\Rb^d}^2 + {\|b(0,u,t_l)\|}_{\Fr}^2\Big),
	\end{align*}
	with a constant $c_2 = c_2(L_U,T,\bX_\circ)$. By \cref{Assumption:Diffbarkeit}-\ref{AssumptionState}, the (deterministic) coefficient $a(0,u,\cdot)$ is  continuous and therefore uniformly continuous in the interval $[0,T]$. Therefore, $\sum_{\nu=0}^{N-1} \|a(0,u,t_l)\|_{\Rb^d}^2 \chi_{\mathscr I^\nu}(\cdot)$ converges uniformly to $\|a(0,u,\cdot )\|^2_{\Rb^d}$ for $N\to\infty$. Integration with respect to $t\in [0,T]$ shows that
	\begin{align*}
		\sum_{l=0}^{N-1}\Delta t\,{\|a(0,u,t_l)\|}_{\Rb^d}^2\to \int_0^T{\|a(0,u,\cdot )\|}^2_{\Rb^d}\,\mathrm d t\le L_U^2,
	\end{align*}
	and, in particular, this sum is bounded by a constant for all $N\in\Nb$. Treating the coefficient $b$ in the same way, we get
	\begin{align*}
		\Eb\big[ \|\bX^N_{\nu+1}\|^2\big] \le c_2 \Delta t \sum_{l=0}^\nu \Eb\big[\|\bX^N_l\|^2\big] + c_2'\quad\text{ for }\nu=0,\dots,N-1
	\end{align*}
	for some constant $c_2'=c_2'(L_U,T,\bX_\circ)$. Now the discrete Gronwall inequality yields \eqref{Lipschitz2-semi}.
\end{proof}

We set $\cdIndex{\nu}\colonequals\cd(t_\nu)$ for $\nu=0,\dots,N$ and introduce the discrete cost functional
\begin{align*}
	J^N(\bX^N,u)\colonequals&\frac{\Delta t}{2}\sum_{\nu=0}^{N-1}\left\|\Cm\big(\Eb\big[\bX^N_\nu\big]\big)- \cdIndex{\nu}\right\|_{\Rb^\ell}^2+\frac{1}{2}\left\|\Cm\big(\Eb\big[\bX^N_N\big]\big)- \cdIndex{N}\right\|_{\Rb^\ell}^2+\frac{\kappa}{2}\,{\|u\|}_\Uc^2 
\end{align*}
for $\bX^N\in \X^N$ and $u\in \UadCalib$ as well as the 
discrete reduced cost functional 
\begin{align*}
	\hat J^N(u)\colonequals J^N(\Sm^N(u),u)\quad\text{for }u\in\UadCalib.
\end{align*}
The following is the main result on  convergence of the semidiscrete approximation.

\begin{theorem}
	\label{thm:convergence_discrete_continuous}
	Let \cref{Assumption:Diffbarkeit} hold.
	\begin{enumerate}[label=\text{\textup{\arabic*)}}]
		\item The map $\hat J^N\colon\UadCalib\to \Rb$ is Lipschitz continuous with Lipschitz constant independent of $N$, i.e., there exists a constant $L_d>0$ such that for all $N\in\Nb$ and all $u, \tilde u\in \UadCalib$ we have
		\begin{align*}
			\big|\hat J^N(u )-\hat J^N(\tilde u )\big|\le L_d\,{\|u -\tilde u \|}_{\Uc}.
		\end{align*}
		\item For any $u\in \UadCalib$, we have $\hat J^N(u)\to \hat J(u)$ for $N\to \infty$.
		\item Let  $\{u^{k}\}_{k\in\Nb}\subset\UadCalib$ be a sequence  with $u^{k}\to u\in \UadCalib$ for $k\to\infty$, and let $\{N_k\}_{k\in\Nb}\subset \Nb$ be a sequence with $N_k\to\infty$ for $k\to\infty$. Then 
		\begin{align*}
			\hat J^{N_k}(u^k)\to \hat J(u)\quad \text{for }k\to\infty.
		\end{align*}
	\end{enumerate}
\end{theorem}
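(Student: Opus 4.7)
The plan is to establish the three items in order: part 1) (uniform Lipschitz) and part 2) (pointwise convergence) will carry the substantive content, and part 3) (joint convergence) will follow from them by a triangle inequality argument.

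For part 1), I would first exploit \eqref{Lipschitz2-semi} together with Jensen's inequality to obtain ${\|\Eb[\bX^N_\nu]\|}_{\Rb^d}\le C_2$ uniformly in $N$, $\nu$, and $u\in\UadCalib$, so that all arguments of $\Cm$ that appear lie in a fixed closed ball $K\subset\Rb^d$. Since $\Cm$ is continuously differentiable (\cref{Assumption:Diffbarkeit}-\ref{AssumptionCost}) and $\cd$ is continuous on the compact $[0,T]$, the map $(x,c)\mapsto{\|\Cm(x)-c\|}_{\Rb^\ell}^2$ is Lipschitz in $x$ on $K$ uniformly in $c\in\cd([0,T])$, with some constant $M_\Cm>0$. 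Combining this with Jensen's inequality and \eqref{Lipschitz1-semi}, we obtain for $\bX^N=\Sm^N(u)$ and $\bXt^N=\Sm^N(\tilde u)$, uniformly in $\nu$ and $N$,
\[
\big|{\|\Cm(\Eb[\bX^N_\nu])-\cdIndex{\nu}\|}_{\Rb^\ell}^2-{\|\Cm(\Eb[\bXt^N_\nu])-\cdIndex{\nu}\|}_{\Rb^\ell}^2\big|\le M_\Cm C_1{\|u-\tilde u\|}_\Uc.
\]
Summing over the $N$ time steps with weight $\Delta t$ contributes a factor $T$, the terminal summand is handled identically, and $u\mapsto\kappa{\|u\|}_\Uc^2/2$ is Lipschitz on the compact set $\UadCalib$. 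Assembling these estimates yields the claimed $N$-independent constant $L_d$.

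For part 2), fix $u\in\UadCalib$ and set $\bX=\Sm(u)\in\X$ and $\bX^N=\Sm^N(u)$. The classical strong-convergence result for the explicit Euler--Maruyama scheme under Lipschitz coefficients (see, e.g., \cite{Higham2001IntroNumSDE}) yields
\[
\max_{0\le\nu\le N}{\|\bX^N_\nu-\bX(t_\nu)\|}_{L^2(\Omega;\Rb^d)}\to0\quad\text{as }N\to\infty,
\]
and hence by Jensen's inequality $\max_\nu{\|\Eb[\bX^N_\nu]-\Eb[\bX(t_\nu)]\|}_{\Rb^d}\to0$. Since $\bX\in\X\subset C([0,T];L^2(\Omega;\Rb^d))$ by \cref{Le:ExtXb}, the map $t\mapsto\Cm(\Eb[\bX(t)])-\cd(t)$ is continuous on the compact $[0,T]$, hence uniformly continuous. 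Combining this uniform continuity with the uniform convergence of the $\Eb[\bX^N_\nu]$ and the local Lipschitz property of $\Cm$ on $K$, the piecewise-constant function $t\mapsto{\|\Cm(\Eb[\bX^N_\nu])-\cdIndex{\nu}\|}_{\Rb^\ell}^2$ on $[t_\nu,t_{\nu+1})$ converges uniformly on $[0,T]$ to $t\mapsto{\|\Cm(\Eb[\bX(t)])-\cd(t)\|}_{\Rb^\ell}^2$. Integration, together with a direct argument for the endpoint term ${\|\Cm(\Eb[\bX^N_N])-\cdIndex{N}\|}_{\Rb^\ell}^2$, then gives $\hat J^N(u)\to\hat J(u)$.

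For part 3), the decomposition
\[
\big|\hat J^{N_k}(u^k)-\hat J(u)\big|\le\big|\hat J^{N_k}(u^k)-\hat J^{N_k}(u)\big|+\big|\hat J^{N_k}(u)-\hat J(u)\big|
\]
suffices: the first summand is bounded by $L_d{\|u^k-u\|}_\Uc\to 0$ by part 1) applied with $N=N_k$, and the second summand tends to zero by part 2) applied with $N=N_k$. The main obstacle in this plan is the invocation of Euler--Maruyama strong convergence in part 2); under the Lipschitz assumptions of \cref{Assumption:Diffbarkeit}-\ref{AssumptionState} and $\bX_\circ\in L^2(\Omega;\Rb^d)$, this is a standard result and can be cited directly. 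A self-contained derivation would amount to a discrete Gronwall-type estimate comparing $\bX^N_\nu$ to $\bX(t_\nu)$, entirely in the spirit of \cref{Lemma:semidiscrete-a-priori}.
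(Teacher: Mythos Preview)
Your proposal is correct and follows essentially the same route as the paper: part~1) via \cref{Lemma:semidiscrete-a-priori} plus local Lipschitz continuity of $x\mapsto\|\Cm(x)-c\|^2$ on a fixed ball, part~2) via strong convergence of the Euler--Maruyama scheme combined with uniform continuity of the integrand, and part~3) via the triangle inequality. The only cosmetic difference is that the paper splits part~2) explicitly into two sub-steps (first replacing $\bX(t_\nu)$ by a step function to approximate the continuous integral, then replacing $\bX(t_\nu)$ by $\bX^N_\nu$ via the scheme's convergence), whereas you merge these into a single uniform-convergence statement; both arguments are equivalent.
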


\begin{proof}
	1) For $\nu=0,\dots,N$ and $\bX^N,\bXt^N\in\X^N$, we have
	\begin{align*}
		\big\| \Eb\big[\bX^N_\nu\big] - \Eb\big[\bXt^N_\nu\big] \big\|_{\Rb^d}&= \big\|  \Eb\big[\bX^N_\nu-\bXt^N_\nu\big]\big\|_{\Rb^d}\le\big\|\bX^N_\nu -\tilde \bX^N_\nu\big\|_{L^1(\Omega;\Rb^d)}\\
		&\le\sqrt T\,\big\|\bX^N_\nu -\tilde \bX^N_\nu\big\|_{L^2(\Omega;\Rb^{d})},
	\end{align*} 
	which shows that the map $\X^N \to\Rb^d,\;\bX^N\mapsto\Eb[\bX^N_\nu]$ is Lipschitz continuous. 
    Since \cref{Assumption:Diffbarkeit}-\ref{AssumptionState} holds, we can apply \cref{Lemma:semidiscrete-a-priori}. 
    We conclude that $\Sm^N\colon \UadCalib\to\X^N$ is Lipschitz continuous with constant independent of $N$ and that $\Sm^N(\UadCalib)$ is bounded by virtue of \eqref{Lipschitz2-semi}.
	Therefore, the same holds for the map
	\begin{align*}
		\phi_\nu\colon\UadCalib\to\Rb^d\quad u\mapsto \Eb\big[\bX_\nu^N\big]\quad\text{with }\bX^N\colonequals\Sm^N (u).
	\end{align*}
	By \cref{Assumption:Diffbarkeit}-\ref{AssumptionCost}, 
	\begin{align}\label{eq:function-for-cost-functional}
		\Rb^d\to \Rb,\quad x\mapsto \left\|\Cm(x)- \cdIndex{\nu}\right\|_{\Rb^\ell}^2
	\end{align}
	is continuously differentiable and therefore Lipschitz on the bounded range of $\phi_\nu$. As the composition of Lipschitz functions is again a Lipschitz function, we see that 
	\begin{align*}
		\UadCalib \to \Rb,\quad u\mapsto \left\|\Cm\big(\Eb\big[\bX^N_\nu\big]\big)- \cdIndex{\nu}\right\|_{\Rb^\ell}^2\quad\text{with }\bX^N\colonequals \Sm^N (u)
	\end{align*}
	is Lipschitz continuous with a Lipschitz constant $L>0$ independent of $N$. 
	As $u\mapsto \|u\|^2_{\Uc}$ is continuously differentiable and therefore Lipschitz with some constant $L'>0$ on the compact set $\UadCalib$, we obtain for $u,\tilde u\in \UadCalib$ and for every $N\in\Nb$
	\begin{align*}
		\big|\hat J^N(u)-\hat J^N(\tilde u)\big| 
		&\le \frac{\Delta t}2\sum_{\nu=0}^{N-1}L\,{\|u-\tilde u\|}_{\Uc}+\frac12 L\,{\|u-\tilde u\|}_{\Uc} + \frac\kappa 2 L'\|u-\tilde u\|_{\Uc} 
		\\
		&\le L_d\,{\|u-\tilde u\|}_{\Uc} 
	\end{align*}
	for $L_d\colonequals((T+1)L+\kappa L')/2$,  where we used $\Delta t=T/N$, which gives part 1).
	
	2) Let $u\in\UadCalib$ and set $\bX \colonequals \Sm(u)$ and $\bX^N \colonequals \Sm^N(u)$ for $N\in\Nb$. By \cref{Theorem:Apriori}, we have $\bX\in C([0,T]; L^2(\Omega;\Rb^d))$ which implies that $\Eb[\bX(\cdot)]\in C([0,T]; \Rb^d)$. From this and the continuity of $\Cm$ and $\cd$, we see that the integrand in the cost functional $J(\bX,u)$ (see \eqref{eq:objective_calib}), which is given as $\|\Cm(\Eb[\bX(\cdot)])-\cd(\cdot)\|_{\Rb^{\ell}}^2$, is a continuous function on $[0,T]$ and can be approximated uniformly by the step function 
	\begin{align*}
		t\mapsto \sum_{\nu=0}^{N-1} \big\|\Cm(\Eb[\bX(t_\nu)])-\cdIndex{\nu}\big\|_{\Rb^{\ell}}^2 \chi_{\mathscr I^\nu}(t).
	\end{align*}
	Integrating this with respect to $t\in[0,T]$, we obtain 
	\begin{equation}
		\label{eq:convergence-semidiscrete1}
		J^N\big((\bX(t_\nu))_{\nu=0,\dots,N},u\big)\to J(\bX,u)\quad \text{for }N\to\infty.
	\end{equation}
	Now we use the fact that $\bX^N$ is computed with the Euler-Murayama scheme \eqref{SDEDiscNeu} which is known to be of convergence order $1/2$ (see \cite[Section~1.1.5]{Milstein-Tretyakov21}). Therefore, we can apply the convergence result from \cite[Theorem~1.1.1]{Milstein-Tretyakov21} which tells us that 
	\begin{align*}
		\sup_{\nu=0,\dots,N}\Eb\Big[ \big\|\bX^N_\nu-\bX(t_\nu) \big\|^2_{\Rb^d}\Big]\to 0\quad\text{for }N\to\infty.
	\end{align*}
	This implies $\Eb[\bX^N_\nu]\to\Eb[\bX(t_\nu)]$ for $N\to\infty$ and, by uniform continuity of the function \eqref{eq:function-for-cost-functional} on bounded subsets, 
	\begin{align*}
		\sup_{\nu=0,\dots,N} \bigg|\big\|\Cm(\Eb[\bX^N_\nu])-\cdIndex{\nu}\big\|_{\Rb^{\ell}}^2- \big\|\Cm(\Eb[\bX(t_\nu)])-\cdIndex{\nu}\big\|_{\Rb^{\ell}}^2\bigg| \to 0 \quad \text{for }N\to\infty.
	\end{align*}
	Summing up over $\nu$ and using $\Delta t=T/N$ again, we get
	\begin{align}
		\label{eq:convergence-semidiscrete2}
		\Big|J^N\big(\bX^N, u\big)-J^N\big( (\bX(t_\nu))_{\nu=0,\dots,N},u\big)\Big|\to 0\quad\text{for }N\to\infty. 
	\end{align}
	As $J^N(\bX^N,u)=\hat J^N(u)$ and $J(\bX,u)=\hat J(u)$, the statement in 2) now follows from \eqref{eq:convergence-semidiscrete1} and \eqref{eq:convergence-semidiscrete2}.
	
	3) This is an immediate consequence of parts 1) and 2). In fact, for $\varepsilon>0$ we first choose $k_0\in\Nb$ such that for all $k\ge k_0$, we have $\|u^k-u\|_{\Uc} \le \varepsilon/(2L_d)$ with $L_d$ from 1). 
    Then $|\hat J^N(u^k) - \hat J^N(u)|\le \varepsilon/2$ holds by 1) for all $N\in\Nb$. 
    Now we apply 2) and $N_k\to\infty$ to see that there exists $k_1\ge k_0$ such that for all $k\ge k_1$ we obtain $|\hat J^{N_k}(u) - \hat J(u)| \le \varepsilon/2$. This yields $|\hat J^{N_k}(u^k) - \hat J(u)|\le \varepsilon$ for $k\ge k_1$, which shows 3).
\end{proof}

\subsection{The fully discretized problem}
\label{sec:Num_approx}

In this section, we explain how we solve \eqref{OptCal} numerically.
In \cref{sec:D6.1}, we already described the discretization with respect to time and formulated a semi-discrete scheme \eqref{SDEDiscNeu}.
In order to obtain a fully discrete scheme, we use a Monte Carlo approximation.
More specifically, we consider $M \gg 1$ realizations of the SDE and for every realization and every time instant $t_\nu\in[0,T]$, we precompute samples of the Brownian motion. We denote these samples by
\begin{align*}
	\Delta \bB^\mu=\big[\Delta \bB_1^\mu\big|\ldots\big|\Delta \bB_N^\mu\big]\in\Rb^{m\times N}\quad\text{with }\Delta \bB_\nu^\mu=\big(\Delta B^\mu_{\nu,j}\big)_{j=1}^m\text{ and }\Delta B^\mu_{\nu,j}\sim \mathcal{N}(0,\Delta t)
\end{align*}
for $\mu=1,\ldots,M$. In order to simplify the notation, we set $h=(M,N)$.

We can now introduce the discretization of the state equation \eqref{SDE} as follows: For any given control $u\in\UadCalib$ find
\begin{align*}
	\bX^h\in\X^h\colonequals\big\{\bXt^\mu_\nu\in\Rb^d\,\big|\,\nu=0,\ldots,N
	\text{ and }\mu=1,\ldots,M\big\}
\end{align*}
satisfying
\begin{align}
	\label{SDEDisc}
	\begin{aligned}
		\bX^\mu_{\nu+1}&=\bX^\mu_\nu +a(\bX^\mu_\nu,u,t_\nu)\Delta t+b(\bm X^\mu_\nu,u,t_\nu)\Delta \bm B_\nu^\mu,
		&&\hspace{-2mm}
		\mu=1,\ldots,M,\nu=0,\ldots,N-1,\\
		\bX_0^\mu&= \bX_\circ,&&\hspace{-2mm}\mu=1,\ldots,M.
	\end{aligned}
\end{align}    
Notice that the solution in \eqref{SDEDisc} depends on the pre-computed samples of the Brownian motion.
For the discrete $\bX^h$, we define the discrete expectation value as the arithmetic mean, i.e., for $\nu \in \{0,\ldots,N\}$
\begin{align}
	\label{ExpValDisc}
	\Eb^{M}\left[\bX^h_\nu\right] 
	\colonequals \frac{1}{M}\sum_{\mu=1}^M \bX_\nu^\mu.
\end{align}
The finite-dimensional (Hilbert) space $\X^h$ is isomorphic to $\Rb^{(N+1)M}$ and is supplied by the weighted inner product
\begin{align*}
	{\langle\bX^h,\bXt^h\rangle}_{\X^h}=\frac{\Delta t}{M}\sum_{\mu=1}^M\sum_{\nu=0}^N\big(\bX_\nu^\mu\big)^\top\bXt_\nu^\mu=\Eb^M\bigg[\Delta t\sum_{\nu=0}^N\big(\bX_\nu^h\big)^\top\bXt_\nu^h\bigg]\quad\text{for }\bX^h,\bXt^h\in\X^h,
\end{align*}
where we set $\bX^h_\nu=\{\bX_\nu^\mu\}_{\mu=1}^M$ and $\bXt^h_\nu=\{\bXt_\nu^\mu\}_{\mu=1}^M$ for $\nu\in\{0,\ldots,N\}$.
Notice that \eqref{SDEDisc} is an explicit difference scheme. Thus, the existence of the sequence $\bX^h$ satisfying \eqref{SDEDisc} is clear. In that case the discrete non-linear solution operator
\begin{align*}
	\Sm^h:\UadCalib\to\X^h,
    \quad
    &\bX^h=\Sm^h(u)
	\text{ is the unique solution to \eqref{SDEDisc} for }
	u\in\UadCalib 
    \\
    &\text{ given the $M$ pre-computed Brownian increments}
\end{align*}
is well defined. We formulate a discrete version of \eqref{OptCal} as follows:
\begin{align}
	\min \label{eq:SDE_Opti_OU_discrete}
	\hat J^h(u)\quad\text{s.t.}\quad u\in\UadCalib,
\end{align}
where the discrete reduced cost functional is defined as
\begin{align*}
	\hat J^h(u)\colonequals J^h(\bX^h,u)
	\quad \text{for } u\in\UadCalib
	\text{ and }\bX^h=\Sm^h(u)
\end{align*}
with
\begin{align*}
	J^h(\bX^h,u)\colonequals
	&\frac{1}{2}\bigg(\Delta t\sum_{\nu=0}^{N-1}\left\|\Cm\big(\Eb^M\big[\bX^h_\nu\big]\big) - \cdIndex{\nu}\right\|_{\Rb^\ell}^2+\left\|\Cm\big(\Eb^M\big[\bX^h_N\big]\big) - \cdIndex{N}\right\|_{\Rb^\ell}^2+\kappa\,{\|u\|}_\Uc^2\bigg)
	\\
	=&\frac{\Delta t}{2}\sum_{\nu=0}^{N-1}
	\bigg\|\Cm\Big(\frac{1}{M}
	\sum_{\mu=1}^M\bX^h_\nu\Big) - \cdIndex{\nu}\bigg\|_{\Rb^\ell}^2+\frac{1}{2}\bigg\|\Cm\Big(\frac{1}{M}\sum_{\mu=1}^M\bX^\mu_N\Big)- \cdIndex{N}\bigg\|_{\Rb^\ell}^2+\frac{\kappa}{2}\sum_{i=1}^r\big|u_i|^2.
\end{align*}

The next step is to introduce the associated
Lagrange functional for the discretized problem \eqref{eq:SDE_Opti_OU_discrete}. It is given by
\begin{align}
	\label{eq:Lagrand_SDE_Opti_discrete}
	\begin{aligned}
		&\LF(\bX^h,u,\bL^h)\colonequals\\
		&\qquad\frac{1}{2}\bigg(\Delta t\sum_{\nu=0}^{N-1}
		\left\|\Cm\big(\Eb^M\big[\bm X^h_\nu\big]\big) - \cdIndex{\nu}\right\|_{\Rb^\ell}^2+\left\|\Cm\big(\Eb^M\big[\bX^h_N\big]\big) - \cdIndex{N}\right\|_{\Rb^\ell}^2+\kappa\,{\|u\|}_\U^2\bigg)
		\\
		&\qquad+\frac{1}{M}\sum_{\mu=1}^M \sum_{\nu=0}^{N-1} \big[\bX^\mu_{\nu+1}-\bX^\mu_\nu- a(\bX^\mu_\nu,u,t_\nu)\,\Delta t-b(\bX^\mu_\nu,u,t_\nu)\,\Delta\bB_\nu^\mu\big]^\top\bL^\mu_{\nu+1}
		\\
		&\qquad+\frac{1}{M}\sum_{\mu=1}^M \big[\bX^\mu_0-\bX_\circ^\mu\big]^\top\bL^\mu_0
	\end{aligned}
\end{align}
for $\bX^h=\{\bX^\mu_\nu\}\in\X^h$, and where we have introduced the Lagrange multiplier
\begin{align*}
    \bL^h=\{\bL_\nu^\mu \in \Rb^d \, | \, \nu = 0,\ldots,N \text{ and }\mu = 1,\ldots,M\}\in \X^h.
\end{align*}
In particular, $\bL_0^\mu$ is the Lagrange multiplier associated with the initial condition.
Note that
\begin{align*}
	\underbrace{b(\bX^\mu_\nu,u,t_\nu)}_{\in\Rb^{d\times m}}\underbrace{\Delta\bB_\nu^\mu}_{\in\Rb^m}=\sum_{j=1}^m\Delta B_{\nu,j}^\mu\underbrace{b_j(\bX^\mu_\nu,u,t_\nu)}_{\in\Rb^d}
	\quad\text{for }
	\mu=1,\ldots,M,\; \nu=0,\ldots,N-1.
\end{align*}
Next, we derive the adjoint equation by computing the partial derivative $\LF_\bX$ of $\LF$ with respect to $\bX^h$. 
Let us refer to \cite{Kaebe2009AdjointMonteCarloCalibra} for a similar approach, but in our case, we consider data that is given over the whole time interval $[0,T]$. 
Notice that in the discretize-before-optimize approach, the solution of the adjoint problem is only the variable $\bL^h$.
This is a crucial difference to the optimize-before-discretize approach, where the solution of the adjoint problem is in fact a tuple of stochastic processes (cf. \cref{Sec:RedGradCont} and \cite{YongZhou1999StochasticControls}). 
For any direction $\bXt^h=\{\bXt^\mu_\nu\}\in\X^h$, 
we obtain the following expression 
\begin{align}
	\label{eq:Derivative_L_X}
	\begin{aligned}
		\LF_\bX(\bX^h,u,\bL^h)\bXt^h
		&=\Delta t\sum_{\nu=0}^{N-1}
		\Big[\Cm\big(\Eb^M\big[\bX^h_\nu\big]\big) - \cdIndex{\nu}\Big]^\top\Cm'\big(\Eb^M\big[\bm X^h_\nu\big]\big)\Eb^M\big[\bXt^h_\nu\big]
		\\
		&\quad+\Big[\Cm\big(\Eb^M\big[\bX^h_N\big]\big) - \cdIndex{\nu}\Big]^\top\Cm'\big(\Eb^M\big[\bX^h_N\big]\big)\Eb^M\big[\bXt^h_N\big]
		\\
		&\quad+\frac{1}{M}\sum_{\mu=1}^M \sum_{\nu=0}^{N-1} \Big[\bXt^\mu_{\nu+1}-\bXt^\mu_\nu-\big(a_x(\bX^\mu_\nu,u,t_\nu)\bXt_\nu^\mu\big)\,\Delta t\Big]^\top\bL^\mu_{\nu+1}
		\\
		&\quad-\frac{1}{M}\sum_{\mu=1}^M \sum_{\nu=0}^{N-1} \bigg[\sum_{j=1}^m\Delta B_{\nu,j}^\mu\big(b_{jx}(\bX^\mu_\nu,u,t_\nu)\bXt_\nu^\mu\big)\bigg]^\top\bL^\mu_{\nu+1} 
        \\
        &\quad 
        + \frac1M \sum_{\mu=1}^M \big[\bXt^\mu_0\big]^\top \bL^\mu_0
	\end{aligned}
\end{align}
with $b_{jx}(\bX^\mu_\nu,u,t_\nu)\in\Rb^{d\times d}$. Using an index shift, we can write
\begin{align}
	\label{Eq:DiscOS-2}
	\begin{aligned}
		&\sum_{\mu=1}^M\sum_{\nu=0}^{N-1}\big[\bXt^\mu_{\nu+1}-\bXt^\mu_\nu\big]^\top\bL^\mu_{\nu+1}
  + \sum_{\mu=1}^M \big[\bXt^\mu_0\big]^\top \bL^\mu_0 \\
		& =\sum_{\mu=1}^M\sum_{\nu=0}^{N-1}\Big(\big[\bXt^\mu_{\nu+1}\big]^\top\bL^\mu_{\nu+1}-\big[\bXt^\mu_\nu\big]^\top\bL^\mu_{\nu+1}\Big)+ \sum_{\mu=1}^M \big[\bXt^\mu_0\big]^\top \bL^\mu_0
		\\
		&=\sum_{\mu=1}^M\sum_{\nu=0}^N\big[\bXt^\mu_\nu\big]^\top\bL^\mu_\nu-\sum_{\mu=1}^M\sum_{\nu=0}^{N-1} \big[\bXt^\mu_\nu\big]^\top\bL^\mu_{\nu+1}
		\\
		&=\sum_{\mu=1}^M\sum_{\nu=0}^{N-1} \big[\bL^\mu_\nu-\bL^\mu_{\nu+1}\big]^\top\bXt^\mu_\nu+\sum_{\mu=1}^M\big[\bL^\mu_N\big]^\top\bXt^\mu_N.
	\end{aligned}
\end{align}
Furthermore, we can calculate
\begin{align}
	\label{Eq:DiscOS-3}
	\begin{aligned}
		&\sum_{\mu=1}^M\sum_{\nu=0}^{N-1} \big[\big(a_x(\bX^\mu_\nu,u,t_\nu)\bXt_\nu^\mu\big)\,\Delta t\big]^\top\bL^\mu_{\nu+1}=
  \sum_{\mu=1}^M \sum_{\nu=0}^{N-1} \big[\big(a_x(\bX^\mu_\nu,u,t_\nu)^\top\bL^\mu_{\nu+1}\big)\,\Delta t\big]^\top\bXt_\nu^\mu
	\end{aligned}
\end{align}
and
\begin{align}
	\label{Eq:DiscOS-4}
	\begin{aligned}
		&\sum_{\mu=1}^M\sum_{\nu=0}^{N-1} \bigg[\sum_{j=1}^m\Delta B_{\nu,j}^\mu\big(b_{jx}(\bX^\mu_\nu,u,t_\nu)\bXt_\nu^\mu\big)\bigg]^\top\bL^\mu_{\nu+1}\\
		&\qquad=\sum_{\mu=1}^M \sum_{\nu=0}^{N-1}\bigg[\sum_{j=1}^m\Delta B_{\nu,j}^\mu\big(b_{jx}(\bX^\mu_\nu,u,t_\nu)^\top\bL^\mu_{\nu+1}\big) \bigg]^\top\bXt_\nu^\mu.
	\end{aligned}
\end{align}
Using \eqref{ExpValDisc}, inserting \eqref{Eq:DiscOS-2}-\eqref{Eq:DiscOS-4} into \eqref{eq:Derivative_L_X} and setting $\LF_\bX(\bX,u,\bL)\bXt=0$, we derive
\begin{align}
	\label{eq:Variational_ineq}
	\begin{aligned}
		&\frac{\Delta t}{M} \sum_{\mu=1}^M \sum_{\nu=0}^{N-1} \Big[\Cm\big(\Eb^M\big[\bX^h_\nu\big]\big)- \cdIndex{\nu}\Big]^\top\Cm'\big(\Eb^M\big[\bm X^h_\nu\big]\big)\bXt^\mu_\nu
		\\
		&+\frac{1}{M} \sum_{\mu = 1}^M\Big[\Cm\big(\Eb^M\big[\bX^h_N\big]\big) - \cdIndex{\nu}\Big]^\top\Cm'\big(\Eb^M\big[\bX^h_N\big]\big)\bXt^\mu_N
		\\
		&+\frac{1}{M}\sum_{\mu=1}^M \sum_{\nu=0}^{N-1} \Big[\bL^\mu_\nu-\bL^\mu_{\nu+1}-\big(a_x(\bX^\mu_\nu,u,t_\nu)^\top\bL^\mu_{\nu+1}\big)\,\Delta t\Big]^\top\bXt_\nu^\mu+\frac{1}{M}\sum_{\mu=1}^M\big[\bL^\mu_N\big]^\top\bXt^\mu_N
		\\
		&-\frac{1}{M}\sum_{\mu=1}^M \sum_{\nu=0}^{N-1} \bigg[\sum_{j=1}^m\Delta B_{\nu,j}^\mu\big(b_{jx}(\bX^\mu_\nu,u,t_\nu)^\top\bL^\mu_{\nu+1}\big) \bigg]^\top\bXt_\nu^\mu=0     
	\end{aligned}
\end{align}
for any direction $\bXt^h=\{\bXt^\mu_\nu\}\in\X^h$. 
If we choose $\bXt^h$ with $\bXt_N^h=0$, we deduce from \eqref{eq:Variational_ineq}
\begin{subequations}
	\label{Eq:Adjoint}
	\begin{align}
		\label{eq:adjoint_SDE_time}
		\begin{aligned}
			&\bL^\mu_{\nu+1}-\bL^\mu_\nu+a_x(\bX^\mu_\nu,u,t_\nu)^\top\bL^\mu_{\nu+1}\Delta t
			+\sum_{j=1}^m\Delta B_{\nu,j}^\mu\big(b_{jx}(\bX^\mu_\nu,u,t_\nu)^\top\bL^\mu_{\nu+1}\big)
			\\
			&\quad
			=\Delta t \, \Cm'\big(\Eb^M\big[\bX^h_\nu\big]\big)^\top\big(\Cm\big(\Eb^M\big[\bX^h_\nu\big]\big)-\cdIndex{\nu}\big)
			\quad\text{for }\mu=1,\ldots,M,\;\nu=0,\ldots,N-1.
		\end{aligned} 
	\end{align}
	Inserting \eqref{eq:adjoint_SDE_time} into \eqref{eq:Variational_ineq} we obtain the terminal condition
	\begin{align}
		\label{Eq:Adjoint-2}
		\bL^\mu_N=\Cm'\big(\Eb^M\big[\bX^h_N\big]\big)^\top\big(\cdIndex{\nu}-\Cm\big(\Eb^M\big[\bX^h_N\big]\big)\big)
	\end{align}
\end{subequations}
for $\mu=1,\ldots,M$. Next, we compute the partial derivative with respect to $u$ and derive
for $u\in \text{int }\UadCalib$ and for any direction $\tilde u\in\Uc$ such that $u+\tilde u \in \UadCalib$
\begin{align*}
	&\LF_u(\bX^h,u,\bL^h)\tilde u\\
	&=\kappa\,u^\top\tilde u-\frac{1}{M}\sum_{\mu=1}^M\sum_{\nu=0}^{N-1} \bigg(\big(a_u(\bX^\mu_\nu,u,t_\nu)\Delta t\big)\,\tilde u
	+\Big(\sum_{j=1}^m\Delta B_{\nu,j}^\mu b_{ju}(\bX^\mu_\nu,u,t_\nu)\Big)\tilde u\bigg)^\top\bL^\mu_{\nu+1}
	\\
	&=\bigg(\kappa u-\frac{1}{M}\sum_{\mu=1}^M \sum_{\nu=0}^{N-1}\big(a_u(\bX^\mu_\nu,u,t_\nu)^\top\bL^\mu_{\nu+1}\big)\Delta t+\Big(\sum_{j=1}^m\Delta B_{\nu,j}^\mu b_{ju}(\bX^\mu_\nu,u,t_\nu)^\top\bL^\mu_{\nu+1}\Big)\bigg)^\top\tilde u.
\end{align*}
Suppose that for given $u\in\UadCalib$ the state $\bX^h=\bX^h_u$ solves \eqref{SDE-Cal}. Moreover, $\bL^h=\bL_u^h$ is the solution to \eqref{Eq:Adjoint} with $\bX^h=\bX^h_u$. Then, it follows that (cf., e.g., \cite[Section~1.6.4]{HPUU08})
\begin{align}
	\label{GradRedGradDisc}
	\begin{aligned}
		&\nabla\hat J^h(u)=\LF_u(\bX_u^h,u,\bL_u^h)\\
		&=\kappa u-\frac{1}{M}\sum_{\mu=1}^M \sum_{\nu=0}^{N-1} \bigg(a_u(\bX^\mu_\nu,u,t_\nu)^\top\bL^\mu_{\nu+1}\Delta t+\sum_{j=1}^m\Delta B_{\nu,j}^\mu b_{ju}(\bX^\mu_\nu,u,t_\nu)^\top\bL^\mu_{\nu+1}\bigg)
		\\
		&=\kappa u-\Eb^M\bigg[ \sum_{\nu=0}^{N-1} a_u(\bX^\mu_\nu,u,t_\nu)^\top\bL^\mu_{\nu+1}\Delta t+\sum_{j=1}^m\Delta B_{\nu,j}^\mu b_{ju}(\bX^\mu_\nu,u,t_\nu)^\top\bL^\mu_{\nu+1}\bigg].
	\end{aligned}
\end{align}

All together, we can formulate first-order necessary optimality conditions for \eqref{OptCal} in \cref{thm:FirstOrderCalibration}.

\begin{theorem}
    \label{thm:FirstOrderCalibration}
    Suppose that \eqref{SDEDisc} and \eqref{Eq:Adjoint} are uniquely solvable and moreover that $\bar u\in\UadCalib$ is a local solution to \eqref{OptCal}. Then, it follows
%
%
\begin{multline}
	\label{DONOC:Disc}
	\bigg\langle\kappa\bar u-\Eb^M\bigg[ \sum_{\nu=0}^{N-1} a_u(\bX^\mu_\nu,u,t_\nu)^\top\bL^\mu_{\nu+1}\Delta t\bigg],u-\bar u\bigg\rangle_\U
	\\
	-\bigg\langle\Eb^M\bigg[\sum_{\nu=0}^{N-1} \sum_{j=1}^m\Delta B_{\nu,j}^\mu b_{ju}(\bX^\mu_\nu,u,t_\nu)^\top\bL^\mu_{\nu+1}\bigg],u-\bar u\bigg\rangle_\U\ge0\quad\text{for all }u\in\UadCalib,
\end{multline}
%
where $\bX^h\in\X^h$ solves \eqref{SDEDisc} for $u=\bar u$ and $\bL^h\in\X^h$ is the solution to \eqref{Eq:Adjoint} with $u=\bar u$.
\end{theorem}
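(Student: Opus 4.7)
The plan is to combine two standard ingredients: the first-order necessary condition for a local minimum of a differentiable functional on a convex set, together with the Lagrangian/adjoint identity that has essentially already been prepared in the preceding derivation \eqref{eq:Lagrand_SDE_Opti_discrete}--\eqref{GradRedGradDisc}. Concretely, I would show that $\hat J^h$ is continuously Fréchet differentiable at $\bar u$, identify $\nabla \hat J^h(\bar u)$ with $\LF_u(\bX^h,\bar u,\bL^h)$ by exploiting the adjoint equation, and then invoke the variational inequality for the convex, closed set $\UadCalib$.

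First I would verify differentiability of $\hat J^h=J^h\circ(\Sm^h(\cdot),\mathrm{id})$. Under \cref{Assumption:Diffbarkeit}, both $a$ and $b$ are continuously differentiable in $(x,u)$, so the explicit Euler--Maruyama recursion \eqref{SDEDisc}, being a finite composition of $C^1$ mappings, yields a continuously differentiable map $\Sm^h\colon\Uc\to\X^h$. Combined with the differentiability of $\Cm$ (see \cref{Assumption:Diffbarkeit}-\ref{AssumptionCost}) and of the quadratic control term, $\hat J^h$ is of class $C^1$ on $\Uc$. Since $\UadCalib$ is convex and $\bar u$ is a local minimizer, the standard first-order necessary condition (cf., e.g., \cite{HPUU08}) reads
\begin{align*}
\big\langle \nabla \hat J^h(\bar u),u-\bar u\big\rangle_\Uc \ge 0\qquad\text{for all }u\in\UadCalib.
\end{align*}

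The second step is to evaluate $\nabla \hat J^h(\bar u)$ through the Lagrangian. With $\bX^h=\Sm^h(\bar u)$ and $\bL^h$ defined as the unique solution of the backward recursion \eqref{Eq:Adjoint} (unique by assumption), the computation leading to \eqref{eq:Variational_ineq} shows that $\LF_\bX(\bX^h,\bar u,\bL^h)\bXt^h=0$ for every $\bXt^h\in\X^h$. Differentiating the identity $\hat J^h(u)=\LF(\Sm^h(u),u,\bL^h)$ in direction $\tilde u\in\Uc$ with $u=\bar u$ and using the chain rule, the contribution from $\LF_\bX\cdot(\Sm^h)'(\bar u)\tilde u$ vanishes by the adjoint equation, leaving $\nabla \hat J^h(\bar u)=\LF_u(\bX^h,\bar u,\bL^h)$, whose explicit form is precisely \eqref{GradRedGradDisc}. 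Inserting this into the variational inequality above yields \eqref{DONOC:Disc}.

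The proof is essentially a clean reassembly of results already established in \cref{sec:Num_approx}; I do not anticipate a genuine obstacle. The only point requiring a little care is the rigorous justification of the adjoint cancellation, i.e., that the specific choice of $\bL^h$ in \eqref{Eq:Adjoint} is exactly what kills the sensitivity term $\LF_\bX\cdot(\Sm^h)'(\bar u)$. This, however, is built into the derivation of \eqref{Eq:Adjoint} from setting $\LF_\bX=0$ and choosing test directions $\bXt^h$ with $\bXt_N^h=0$ and $\bXt_N^h\ne 0$ separately, so no new work is needed.
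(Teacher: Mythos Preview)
Your proposal is correct and follows essentially the same route as the paper: the paper presents the theorem as an immediate consequence of the Lagrangian computation \eqref{eq:Lagrand_SDE_Opti_discrete}--\eqref{GradRedGradDisc} (citing \cite[Section~1.6.4]{HPUU08} for the adjoint identity $\nabla\hat J^h(\bar u)=\LF_u(\bX^h_{\bar u},\bar u,\bL^h_{\bar u})$) combined with the standard variational inequality on the convex set $\UadCalib$. You spell out the $C^1$ regularity of $\Sm^h$ and the adjoint cancellation a bit more explicitly, but the argument is the same.
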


In \cref{rem:ComparisonGradients}, we compare 
the discrete gradient \eqref{GradRedGradDisc} with the continuous one that will be derived in \cref{Sec:RedGradCont}.

\subsection{Continuous control inputs}
\label{sec:D6.2}

Let us just explain briefly how we derive a discretization of the general optimization problem \eqref{eq:SDE_Opti_OU}. Again, we set $h=(M,N)$ to simplify the notation. We approximate a time-dependent control $u\in\U$ by the piecewise constant function
\begin{align*}
	u^h(t)=
	\sum_{\nu=0}^{N-1} \mathrm u_\nu\chi_{\mathscr I_\nu}(t)\in\Rb^r
	\quad \text{for } t\in[0,T]
\end{align*}
with $\mathrm u_0,\ldots,\mathrm u_{N-1}\in\Rb^r$ and characteristic functions $\chi_{\mathscr I_\nu}:[0,T]\to\{0,1\}$ defined in \eqref{def:indicator_function}. Thus, each control $u^h$ is characterized by a coefficient matrix $\bU^h=[\mathrm u_0|\ldots|\mathrm u_{N-1}]\in\Rb^{r\times N}$. We set $\U^h=\Rb^{r\times N}$ which is a Hilbert space endowed with the (weighted) inner product
\begin{align*}
	{\langle\bU^h,\bUt^h\rangle}_{\U^h}
	=\Delta t\sum_{i=1}^r\sum_{\nu=0}^{N-1} \bU^h_{i\nu}\bUt^h_{i\nu}
	=\Delta t \sum_{\nu=0}^{N-1} \mathrm u_\nu^\top\tilde{\mathrm u}_\nu
\end{align*}
for $\bU^h=[\mathrm u_0|\ldots|\mathrm u_{N-1}]$ and $\bUt^h=[\tilde{\mathrm u}_0|\ldots|\tilde{\mathrm u}_{N-1}]$. The associated induced norm is given as $\|\bU^h\|_{\U^h}=\sqrt{\Delta t}\,\|\bU^h\|_\Fr$ .

Clearly, $u^h\in\Uad$ holds true provided $\mathrm u_\nu\in[\ua,\ub]$ for $\nu=0,\ldots,N-1$. Thus, we define the set of admissible control coefficient matrices as
\begin{align*}
	\Uadh=
	\big\{\bU^h=
	\big[\mathrm u_0|\ldots|\mathrm u_{N-1}\big]\in\U^h\,\big|\,\mathrm u_\nu\in[\ua,\ub]
	\text{ for }\nu=0,\ldots,N-1\big\}.
\end{align*}
%
%
%

Next we introduce a discretization of the state equation \eqref{SDE} as follows: For any given control matrix $\bU^h=[\mathrm u_0|\ldots|\mathrm u_{N-1}]\in\Uadh$ find $\bX^h\in\X^h$ satisfying
\begin{align}
	\label{SDEDisc2}
	\begin{aligned}
		\bX^\mu_{\nu+1}&=\bX^\mu_\nu +a(\bX^\mu_\nu,\mathrm u_\nu,t_\nu)\,\Delta t+b(\bm X^\mu_\nu,\mathrm u_\nu,t_\nu)\,\Delta \bm B_\nu^\mu
		\\
		&\hspace{15mm}
		\text{for }\mu=1,\ldots,M
		\text{ and }\nu=0,\ldots,N-1,\\
		\bX_0^\mu&= \bX_\circ\hspace{5.4mm}\text{for }\mu=1,\ldots,M.
	\end{aligned}
\end{align}    
For any $\bU^h\in\Uadh$ we suppose that the explicit difference scheme \eqref{SDEDisc2} admits a unique solution $\bX^h\in\X^h$. Thus, the discrete non-linear solution operator
\begin{align*}
	\Sm^h:\Uadh\to\X^h,
    \quad&\bX^h=\Sm^h(\bU^h)\text{ is the unique solution to \eqref{SDEDisc2} for }\bU^h\in\Uadh
    \\
    &\text{ given the $M$ pre-computed Brownian increments}
\end{align*}
is well defined. Now let us formulate a discrete version of \eqref{eq:SDE_Opti_OU}:
\begin{align}
	\label{eq:SDE_Opti_OU_discrete2}
	\hat J^h(\bU^h)\quad\text{s.t.}\quad \bU^h\in\Uadh,
\end{align}
where the discrete reduced cost functional $\hat J^h$ is given as
\begin{align*}
	\hat J^h(\bU^h)=J^h(\bX^h,\bU^h)\quad\text{for }\bU^h\in\Uadh\text{ and }\bX^h=\Sm^h(\bU^h)
\end{align*}
with
\begin{align*}
	J^h(\bX^h,\bU^h)
	\colonequals\frac{1}{2}\bigg(\Delta t\sum_{\nu=0}^{N-1} \big\|\Cm\big(\Eb^M\big[\bm X^h_\nu\big]\big) 
	- \cdIndex{\nu}\big\|_{\Rb^\ell}^2+\left\|\Cm\big(\Eb^M\big[\bX^h_N\big]\big) - \cdIndex{N}\right\|_{\Rb^\ell}^2+\kappa\,{\|\bU^h\|}_{\U^h}^2\bigg)
\end{align*}
and $\bX^h_\nu=\{\bX_\nu^\mu\}_{\mu=1}^M$ for $\nu\in\{0,\ldots,N\}$.


\section{First-order optimality conditions}
\label{sec:Gradient}

In this section, we derive a formula for the gradient of the reduced cost functional $\hat J:\Uad\to\Rb$ which allows us to formulate first-order necessary optimality conditions for the continuous problem. Here we focus on the choice $\U=L^2(0,T;\Rb^r)$. The derivation for the calibration problem is analogously.

First let us introduce the constraint function $e:\X\times\Uad\to\X$ by
\begin{align*}
	e(\bX,u) &
	\colonequals\bX(\cdot)-\bX_\circ-\int_0^{(\cdot)}a(\bX(s),u(s),s)\,\mathrm ds-\int_0^{(\cdot)}b(\bX(s),u(s),s)\,\mathrm d\bB(s)
\end{align*}
for $(\bX,u)\in\X\times\Uad$. 
Here, $(\cdot)$ is a placeholder denoting the dependence on $t$.
Using the mapping $e$, the constraint \eqref{SDE} can be expressed as the operator equation
\begin{align}
	\label{E_OE}
	e(\bm X,u)=0\quad\text{in }\X.
\end{align}
Due to \cref{Definition:SDE}, we infer from \eqref{E_OE} that $\bX$ is a solution to \eqref{SDE} for any given $u\in\Uad$. 
Moreover, \eqref{eq:SDE_Opti_OU} can be expressed equivalently as a non-convex, infinite-dimensional, constrained optimization problem as follows:
\begin{align}
	\label{OptConstrained}
	\min J(\bm X,u)\quad\text{s.t.}\quad(\bm X,u)\in\X\times \Uad\text{ and }e(\bm X,u)=0\text{ in }\X.
\end{align}
It follows from \cref{Theorem:ExUni,Th:OptExSol} that \eqref{OptConstrained} admits a (global) optimal solution $(\bar\bX,\bar u)\in\X\times\Uad$ with $\bar X=\bar S(\bar u)$ provided that \cref{Assumption:coefficients,Assumption:continuity_C} hold.

\subsection{Fr\'echet differentiablity of the constraints}

To derive the derivative of the constraint function $e$, we have to ensure \cref{Assumption:Diffbarkeit}-\ref{AssumptionState}.

\begin{remark}
	\label{Rem:TensorProd}
	Let $(x,u)\in\Rb^d\times\Uad$. For any $s\in[0,T]$ we have
	\begin{align*}
		b(x,u(s),s)=\big[b_1(x,u(s),s)\big|\ldots\big|b_m(x,u(s),s)\big]\in\Rb^{d\times m}.
	\end{align*}
	With \cref{Assumption:Diffbarkeit}-\ref{AssumptionState} holding the partial derivative $b_x(x,u(s),s)$ is a linear map from $\Rb^d$ to $\Rb^{d\times m}$ for any $s\in[0,T]$. Consequently, $b_x(x,u(s),s)\tilde x$ is a $(d\times m)$ matrix for all $\tilde x\in\Rb^d$ and we write
	\begin{align*}
		b_x(x,u(s),s)\tilde x&=\big[b_{1x}(x,u(s),s)\tilde x\big|\ldots|b_{mx}(x,u(s),s)\tilde x\big]=\sum_{i=1}^db_{x_i}(x,u(s),s)\tilde x_i\in\Rb^{d\times m}
	\end{align*}
	for any $s\in[0,T]$, where $b_{jx}(x,u(s),s)$ ($1\le j\le m$) denotes the partial derivative of $b_j$ with respect to $x$. Analogously, we write 
	\begin{align*}
		b_u(x,u(s),s)\tilde u&=\big[b_{1u}(x,u(s),s)\tilde u\big|\ldots\big|b_{mu}(x,u(s),s)\tilde u\big]=\sum_{i=1}^mb_{u_i}(x,u(s),s)\tilde u_m\in\Rb^{d\times m}
	\end{align*}
	for any $s\in[0,T]$ and $\tilde u\in\U$.
\end{remark}

\begin{lemma}
	\label{Le:eFrDer}
	Suppose that \cref{Assumption:Diffbarkeit}-\ref{AssumptionState} holds. 
	Then the mapping $e$ is continuously Fr\'echet differentiable on $\X\times\Uad$. 
	For any $(\bX,u)\in\X\times\Uad$ the partial Fr\'echet derivatives $e_\bX$ and $e_u$ are given by
	\begin{align*}
		e_\bX(\bX,u)\bX^\delta&=\bX^\delta(\cdot)-\int_0^{(\cdot)}
		a_x(\bX(s),u(s),s)\bX^\delta(s)\,\mathrm ds\\
		&\quad-\int_0^{(\cdot)}b_x(\bX(s),u(s),s)\bX^\delta(s)\,\mathrm d\bB(s)&&\text{for }\bX^\delta\in\X,
		\\
		e_u(\bX,u)u^\delta&=-\int_0^{(\cdot)}
		a_u(\bX(s),u(s),s)u^\delta(s)\,\mathrm ds\\
		&\quad-\int_0^{(\cdot)}b_u(\bX(s),u(s),s)u^\delta(s)\,\mathrm d\bB(s)&&\text{for }u^\delta\in\U,
	\end{align*}
	respectively, where we have used the notation introduced in \cref{Rem:TensorProd}.
\end{lemma}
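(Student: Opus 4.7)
The plan is to write $e$ as the sum of an affine part and two compositions of bounded linear integration operators with superposition (Nemytskii) operators, and to obtain the derivative via the chain rule. I would set $e(\bX,u) = I(\bX) - L_1 N_a(\bX,u) - L_2 N_b(\bX,u)$, where $I(\bX)(t) \colonequals \bX(t) - \bX_\circ$, the integration maps
\[
L_1\colon \Lb^2_\Fs(\Rb^d)\to\X,\ \mathtt a\mapsto\int_0^\cdot \mathtt a(s)\,\mathrm ds,\quad L_2\colon \Lb^2_\Fs(\Rb^{d\times m})\to\X,\ \mathtt b\mapsto\int_0^\cdot \mathtt b(s)\,\mathrm d\bB(s)
\]
are bounded linear with operator norm $1$ by the isometric isomorphism $\Phi$ of \cref{Lem:IsoX}, and the Nemytskii operators are $N_a(\bX,u)(\omega,s)=a(\bX(\omega,s),u(s),s)$ and $N_b(\bX,u)(\omega,s)=b(\bX(\omega,s),u(s),s)$. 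Well-definedness of $N_a,N_b$ as maps into $\Lb^2_\Fs(\Rb^d)$ and $\Lb^2_\Fs(\Rb^{d\times m})$ follows from the Lipschitz bound \eqref{eq:LipCondition} combined with the embedding $\X\subset\Lb^2_\Fs(\Rb^d)$ of \cref{Le:ExtXb}. Since $I,L_1,L_2$ are affine (resp.\ linear) and bounded, hence trivially continuously Fr\'echet differentiable, the task reduces to showing the same for $N_a$ and $N_b$.

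For $N_a$, the candidate derivative at $(\bX,u)$ in direction $(\bX^\delta,u^\delta)$ is the pointwise multiplication $DN_a(\bX,u)[\bX^\delta,u^\delta](\omega,s)=a_x(\bX,u,s)\bX^\delta+a_u(\bX,u,s)u^\delta$. The fundamental theorem of calculus yields the pointwise remainder
\[
R(\omega,s) = \int_0^1\!\Big[\big(a_x(\bX+\theta\bX^\delta,u+\theta u^\delta,s) - a_x(\bX,u,s)\big)\bX^\delta + \big(a_u(\bX+\theta\bX^\delta,u+\theta u^\delta,s) - a_u(\bX,u,s)\big)u^\delta\Big]\mathrm d\theta.
\]
I would bound $\|R\|_{\Lb^2_\Fs}^2$ by the corresponding integral over $\Omega\times(0,T)\times(0,1)$. \cref{Assumption:coefficients} gives the uniform bound $\|a_x\|,\|a_u\|\le L_U$ (hence an integrable $4L_U^2$-majorant for the squared difference factors), while \cref{Assumption:Diffbarkeit}-\ref{AssumptionState} gives continuity of $a_x,a_u$. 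Together with the continuous embedding $\X\hookrightarrow L^2(\Omega\times(0,T);\Rb^d)$ of \cref{Le:ExtXb}, any sequence $(\bX^\delta_n,u^\delta_n)\to 0$ in $\X\times\U$ admits a pointwise a.e.\ convergent subsequence, and Lebesgue's dominated convergence theorem then produces the desired $o$-bound on $R$. Continuity of $DN_a$ in $(\bX,u)$ follows from an entirely analogous estimate in which $(\bX^\delta,u^\delta)$ plays the role of a generic unit-norm test direction and the coefficient differences are taken between varying base points $(\bX_n,u_n)\to(\bX,u)$.

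Once $N_a,N_b$ are $C^1$ with the derivatives above, the chain rule combined with boundedness of $L_1,L_2$ and affineness of $I$ yields the two formulas for $e_\bX$ and $e_u$ stated in the lemma. The main obstacle I foresee is the passage from a G\^ateaux-type pointwise limit to a true Fr\'echet remainder estimate uniform in the direction $(\bX^\delta,u^\delta)$, a classical delicate point for superposition operators between $L^2$ spaces; I would handle it by the subsequence-plus-dominated-convergence scheme sketched above, possibly packaged as a contradiction argument. Everything else is linear or chain-rule bookkeeping.
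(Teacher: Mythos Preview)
Your decomposition $e = I - L_1 N_a - L_2 N_b$ together with the isometry $\Phi$ of \cref{Lem:IsoX} is precisely what the paper's proof exploits implicitly: since the $\X$-norm is \emph{defined} through the integrands, the paper's direct computation of $\|e(\bX+\bX^\delta,u)-e(\bX,u)-e_\bX(\bX,u)\bX^\delta\|_\X^2$ immediately collapses to the $\Lb^2_\Fs$-norms of the pointwise Taylor remainders of $a$ and $b$, which is exactly your Nemytskii remainder $\|R\|_{\Lb^2_\Fs}$. So the two approaches are the same computation in different clothing; yours simply names the pieces and invokes the chain rule, while the paper writes out the norm in one line. The one substantive difference is that the paper's proof is terse on the passage from the pointwise little-$o$ to the $\X$-norm little-$o$---it writes $\Eb[\int_0^{(\cdot)}o(\|\bX^\delta(s)\|_{\Rb^d}^2)\,\mathrm ds]=o(\|\bX^\delta\|_\X^2)$ and defers the details to Lemmas~6.3.3--6.3.4 of \cite{Gross2015applications}---whereas you correctly isolate this as the genuine analytic step and propose the standard subsequence-plus-dominated-convergence argument. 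Your scheme is adequate here because the Lipschitz bound \eqref{eq:LipCondition} provides the uniform majorant $|a_x(\cdot)-a_x(\cdot)|\le 2L_U$ needed to dominate the difference factor, while $|\bX^\delta|^2$ supplies the rest; the contradiction packaging you mention is the right way to upgrade the along-a-subsequence conclusion to a full Fr\'echet remainder estimate.
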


\begin{proof}
	The claim follows by similar arguments as the proofs of Lemmas 6.3.3 and 6.3.4 in \cite{Gross2015applications}. 
	In fact, by \cref{Assumption:Diffbarkeit}-\ref{AssumptionState} it holds that the coefficients $a$ and $b$ of the SDE are continuously Fr\'echet differentiable. 
	Now let $\bX^\delta \in \X$ and apply Taylor's formula to $a$ and $b$ with respect to $\bX$
	\begin{align*}
		a(\bX(t)+\bX^\delta(t), u(t),t)=a(\bX(t),u(t),t)+a_x(\bX(t),u(t),t)\bX^\delta(t)+{\scriptstyle \Om}\big({\|\bX^\delta(t)\|}_{\Rb^d}\big)
	\end{align*}
	and
	\begin{align*}
		b(\bX(t)+\bX^\delta(t), u(t),t)=b(\bX(t),u(t),t)+b_x(\bX(t),u(t),t)\bX^\delta(t)+{\scriptstyle \Om}\big({\|\bX^\delta(t)\|}_{\Rb^d}\big).
	\end{align*}
	Now, we obtain for $e_\bX$ as defined in the statement of the theorem
	\begin{align*}
		&{\|e(\bX + \bX^\delta,u) - e(\bX,u) - e_\bX(\bX,u)\,\bX^\delta \|}_\X^2\\
		&=\mathbb E\bigg[\int_0^{(\cdot)}{\|a(\bX(s)+\bX^\delta(s),u(s),s)-a(\bX(s),u(s),s)-a_x(\bX(s),u(s),s)\,\bX^\delta(s)\|}^2_{\Rb^d} \rmd s\\
		&\hspace{10mm}+\int_0^{(\cdot)}{\|b(\bX(s)+\bX^\delta(s),u(s),s)-b(\bX(s),u(s),s)-b_x(\bX(s),u(s),s)\,\bX^\delta(s)\|}^2_{\mathrm{F}} \rmd s\bigg]\\
		&=\mathbb E\bigg[\int_0^{(\cdot)}{\scriptstyle \Om}\big({\|\bX^\delta(s)\|}_{\Rb^d}^2\big)\,\mathrm ds\bigg]={\scriptstyle \Om}\left({\|\bX^\delta\|}_{C([0,T];L^2(\Omega;\Rb^d))}^2\right)={\scriptstyle \Om}\big({\|\bX^\delta\|}_\X^2\big),
	\end{align*}
	where we have used \cref{Le:ExtXb}. An analogous result holds for the Fr\'echet differentiability with respect to $u$. 
	More precisely, for any $u^\delta \in \U$ with $u+u^\delta\in\Uad$ it holds that
	\begin{align*}
		{\|e(\bX,u+u^\delta)-e(\bX,u)- e_u(\bX,u)\,u^\delta\|}_\X= {\scriptstyle \Om}\big({\|u^\delta\|}_\U\big).
	\end{align*}
\end{proof}

In \cref{Co:Bijectivity} we will prove that the operator $e_\bX(\bX,u):\X\to\X$ has a bounded inverse for $(\bX,u)\in \X\times\Uad$. 
For that purpose we set for all $x=(x_i)\in\Rb^d$ and f.a.a. $t\in[0,T]$
\begin{align}
	\label{OpExLinSDE}
	\begin{aligned}
		\mathfrak a(x,t)&=a_x(\bX(\cdot\,,t),u(t),t)x=\sum_{i=1}^da_{x_i}(\bX(\cdot\,,t),u(t),t)x_i\in\Rb^d&&\text{in }\Omega\text{ a.s.},\\
		\mathfrak b(x,t)&=b_x(\bX(\cdot\,,t),u(t),t)x=\sum_{i=1}^db_{x_i}(\bX(\cdot\,,t),u(t),t)x_i\in\Rb^{d\times m}&&\text{in }\Omega\text{ a.s.},        
	\end{aligned}
\end{align}
where we have used the notation introduced in \cref{Rem:TensorProd} and `a.s.' stands for `almost surely'. Note that both $\mathfrak a(\cdot\,,t)$ and $\mathfrak b(\cdot\,,t)$ are linear mappings on $\Rb^d$ f.a.a. $t\in[0,T]$ and in $\Omega$ a.s. However, due to the dependence on $\bX$ the functions $\mathfrak a$ and $\mathfrak b$ contain random-valued coefficients. Therefore, we can not utilize \cref{A:LinSDE} together with \cite[Theorem 6.14]{YongZhou1999StochasticControls} in the proof of \cref{Co:Bijectivity} below; cf. \cref{Rem:LinSDE}. For that reason we introduce the following hypothesis.

\begin{assumption}
	\label{A:5.3}  
	Let $(\bX,u)\in\X\times\Uad$ hold and the mappings $\mathfrak a$, $\mathfrak b$ be given as in \eqref{OpExLinSDE}. Suppose that there exists a constant $L_\mathfrak{ab}\ge0$ satisfying
	\begin{subequations}
		\label{Eq:A:5.3-1}
		\begin{align}
			\label{Eq:A:5.3-1:a}
			{\|a_x(\bX(t),u(t),t)\|}_\Fr&\le L_\mathfrak{ab}&&\text{f.a.a. }t\in[0,T]\text{ and in }\Omega\text{ a.s.},\\
			\label{Eq:A:5.3-1:b}
			\bigg(\sum_{i=1}^d{\|b_{x_i}(\bX(t),u(t),t)\|}_\Fr^2\bigg)^{1/2}&\le L_\mathfrak{ab}&&\text{f.a.a. }t\in[0,T]\text{ and in }\Omega\text{ a.s.}
		\end{align}  
	\end{subequations}
	Further, we suppose that
	\begin{align}
		\label{Eq:A:5.3-2}
		\Eb\bigg[\int_0^T{\|\mathfrak a(0,t)\|}_{\Rb^d}^2+{\|\mathfrak b(0,t)\|}_\Fr^2\,\mathrm dt\bigg]\le L_\mathfrak{ab}^2,
	\end{align}
	i.e., $\mathfrak a(0,\cdot)\in\Lb^2_\mathscr F(\Rb^d)$ and $\mathfrak b(0,\cdot)\in\Lb^2_\mathscr F(\Rb^{d\times m})$ hold.
\end{assumption}

In the next lemma, we prove that \cref{A:5.3} implies condition \textbf{(RC)} from \cite[p.~49]{YongZhou1999StochasticControls} which is given by \eqref{Eq:A:5.3-2} and \eqref{Eq:A:5.3-3} below. Notice that condition \textbf{(RC)} ensures the existence of a unique solution to linear SDEs with random coefficient functions.
This fact will be used in \cref{Co:Bijectivity} below in an essential manner.

\begin{lemma}
	\label{Lem:AA:5.3}
	Let $(\bX,u) \in \X\times\Uad$ hold and the mappings $\mathfrak a$, $\mathfrak b$ be given as in \eqref{OpExLinSDE}. Suppose that \cref{A:5.3} holds true. Then we have
	\begin{subequations}
		\label{Eq:A:5.3-3}
		\begin{align}
			\label{Eq:A:5.3-3:a}
			{\|\mathfrak a(\varphi(t),t)-\mathfrak a(\phi(t),t)\|}_{\Rb^d}&\le L_\mathfrak{ab}\,{\|\varphi-\phi\|}_{C([0,T];\Rb^d)}&&\text{f.a.a. }t\in[0,T]\text{ and in }\Omega\text{ a.s.},\\
			\label{Eq:A:5.3-3:b}
			{\|\mathfrak b(\varphi(t),t)-\mathfrak b(\phi(t),t)\|}_\Fr&\le L_\mathfrak{ab}\,{\|\varphi-\phi\|}_{C([0,T];\Rb^d)}&&\text{f.a.a. }t\in[0,T]\text{ and in }\Omega\text{ a.s.},
		\end{align}
	\end{subequations}
	where $\varphi,\phi$ are chosen arbitrarily in $C([0,T];\Rb^d)$.
\end{lemma}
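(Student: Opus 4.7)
The plan is to exploit the fact that both $\mathfrak a(\cdot\,,t)$ and $\mathfrak b(\cdot\,,t)$ are \emph{linear} maps on $\Rb^d$ (for fixed $t$ and almost surely), so that the differences in \eqref{Eq:A:5.3-3:a} and \eqref{Eq:A:5.3-3:b} reduce, respectively, to $a_x(\bX(t),u(t),t)\,(\varphi(t)-\phi(t))$ and to $\sum_{i=1}^d b_{x_i}(\bX(t),u(t),t)\,(\varphi_i(t)-\phi_i(t))$. The Lipschitz constants then fall out by combining standard matrix--vector estimates with the pointwise bounds supplied by \cref{A:5.3}.

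For \eqref{Eq:A:5.3-3:a}, I would first invoke the elementary inequality ${\|Ax\|}_{\Rb^d}\le{\|A\|}_\Fr\,{\|x\|}_{\Rb^d}$ (a row-wise application of Cauchy--Schwarz) together with \eqref{Eq:A:5.3-1:a} to obtain, f.a.a.\ $t\in[0,T]$ and a.s.\ in $\Omega$,
\begin{align*}
{\|\mathfrak a(\varphi(t),t)-\mathfrak a(\phi(t),t)\|}_{\Rb^d}
&={\|a_x(\bX(t),u(t),t)(\varphi(t)-\phi(t))\|}_{\Rb^d}\\
&\le{\|a_x(\bX(t),u(t),t)\|}_\Fr\,{\|\varphi(t)-\phi(t)\|}_{\Rb^d}\\
&\le L_{\mathfrak{ab}}\,{\|\varphi-\phi\|}_{C([0,T];\Rb^d)},
\end{align*}
where the last step uses the trivial bound ${\|\varphi(t)-\phi(t)\|}_{\Rb^d}\le\sup_{s\in[0,T]}{\|\varphi(s)-\phi(s)\|}_{\Rb^d}$.

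For \eqref{Eq:A:5.3-3:b}, I would decompose $\mathfrak b(\varphi(t),t)-\mathfrak b(\phi(t),t)=\sum_{i=1}^d b_{x_i}(\bX(t),u(t),t)\,(\varphi_i(t)-\phi_i(t))$ using the representation in \eqref{OpExLinSDE}. The triangle inequality for $\|\cdot\|_\Fr$ together with the Cauchy--Schwarz inequality in $\Rb^d$ gives
\begin{align*}
{\|\mathfrak b(\varphi(t),t)-\mathfrak b(\phi(t),t)\|}_\Fr
&\le\sum_{i=1}^d{\|b_{x_i}(\bX(t),u(t),t)\|}_\Fr\,|\varphi_i(t)-\phi_i(t)|\\
&\le\bigg(\sum_{i=1}^d{\|b_{x_i}(\bX(t),u(t),t)\|}_\Fr^2\bigg)^{1/2}{\|\varphi(t)-\phi(t)\|}_{\Rb^d},
\end{align*}
so that \eqref{Eq:A:5.3-1:b} and the same $\sup$-bound as before yield the assertion.

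There is no genuine obstacle here: the result is essentially an algebraic consequence of linearity of the linearised coefficients in the spatial variable plus the almost-sure pointwise bounds in \cref{A:5.3}. The only minor subtlety is choosing the right matrix inequality in each case: for part (a) one bounds a single matrix--vector product via ${\|Ax\|}_{\Rb^d}\le{\|A\|}_\Fr\,{\|x\|}_{\Rb^d}$, whereas for part (b) one needs Cauchy--Schwarz on the sum $\sum_i b_{x_i}\,c_i$, because the components of $\bX$ multiply $(d\times m)$ blocks and the compatibility inequality for the Frobenius norm is not directly applicable. Apart from that, the proof is immediate and of the same flavour as the standard growth/Lipschitz bookkeeping already used in \cref{Theorem:Apriori} and \cref{Lemma:ExUni_Uad}.
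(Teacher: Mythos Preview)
Your proof is correct and follows essentially the same approach as the paper: both parts exploit the linearity of $\mathfrak a(\cdot,t)$ and $\mathfrak b(\cdot,t)$, use the Frobenius matrix--vector inequality for \eqref{Eq:A:5.3-3:a} and the triangle inequality followed by Cauchy--Schwarz on the component sum for \eqref{Eq:A:5.3-3:b}, and then invoke the pointwise bounds \eqref{Eq:A:5.3-1:a} and \eqref{Eq:A:5.3-1:b} together with the $\sup$-norm estimate. The steps and the order in which they are applied match the paper's proof line by line.
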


\begin{proof}
	For arbitrarily $\varphi,\phi\in C([0,T];\Rb^d)$ and $u\in\Uad$ we get f.a.a. $t\in[0,T]$ and in $\Omega$ a.s.
	\begin{align*}
		&{\|\mathfrak a(\varphi(t),t)-\mathfrak a(\phi(t),t)\|}_{\Rb^d}=\left\|a_x(\bX(t),u(t),t)\big(\varphi(t)-\phi(t)\big)\right\|_{\Rb^d}
        \\
		&\quad\le{\|a_x(\bX(t),u(t),t)\|}_\Fr{\|\varphi(t)-\phi(t)\|}_{\Rb^d}\le{\|a_x(\bX(t),u(t),t)\|}_\Fr{\|\varphi-\phi\|}_{C([0,T];\Rb^d)},
	\end{align*}
	where $\bX$ is any element in $\X$. Utilizing \eqref{Eq:A:5.3-1:a} we infer that \eqref{Eq:A:5.3-3:a} is valid. Analogously, we derive f.a.a. $t\in[0,T]$ and in $\Omega$ a.s.
	\begin{align*}
		&{\|\mathfrak b(\varphi(t),t)-\mathfrak b(\phi(t),t)\|}_\Fr=
		\bigg\|\sum_{i=1}^db_{x_i}(\bX(t),u(t),t)\big(\varphi_i(t)-\phi_i(t)\big)\bigg\|_\Fr\\
		&\quad\le\sum_{i=1}^d{\|b_{x_i}(\bX(t),u(t),t)\|}_\Fr\big|\varphi_i(t)-\phi_i(t)\big|\\
		&\quad\le\bigg(\sum_{i=1}^d{\|b_{x_i}(\bX(t),u(t),t)\|}_\Fr^2\bigg)^{1/2}{\|\varphi(t)-\phi(t)\|}_{\Rb^d}\\
		&\quad\le\bigg(\sum_{i=1}^d{\|b_{x_i}(\bX(t),u(t),t)\|}_\Fr^2\bigg)^{1/2}{\|\varphi-\phi\|}_{C([0,T];\Rb^d)},
	\end{align*}
	where again $\bX$ is any element in $\X$. Hence, \eqref{Eq:A:5.3-1:b} implies \eqref{Eq:A:5.3-3:b}.
\end{proof}

\begin{proposition}
	\label{Co:Bijectivity}
	Suppose that \cref{Assumption:Diffbarkeit,A:5.3} hold, and let $(\bX,u)\in\X\times\Uad$. Then the linear operator $e_\bX(\bX,u):\X\to\X$ is bijective and its inverse $e_\bX(\bX,u)^{-1}$ is a bounded operator. 
\end{proposition}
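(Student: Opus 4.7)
The plan is to reduce the equation $e_\bX(\bX,u)\bX^\delta=\bY$ to a linear SDE with random coefficients for $\bX^\delta$, and then invoke the existence and uniqueness theorem for such SDEs from \cite{YongZhou1999StochasticControls} whose hypotheses are precisely encoded in \cref{A:5.3} and \cref{Lem:AA:5.3}. Concretely, let $\bY\in\X$ with unique representation $\bY=\Phi(\bY_\circ,\mathtt a_Y,\mathtt b_Y)$ given by \cref{Lem:IsoX}. Using the formula for $e_\bX(\bX,u)$ from \cref{Le:eFrDer} and the isomorphism $\Phi$, the equation $e_\bX(\bX,u)\bX^\delta=\bY$ is equivalent to requiring that $\bX^\delta\in\X$ solve
\begin{equation*}
\bX^\delta(t)=\bY_\circ+\int_0^t\bigl(\mathtt a_Y(s)+\mathfrak a(\bX^\delta(s),s)\bigr)\rmd s+\int_0^t\bigl(\mathtt b_Y(s)+\mathfrak b(\bX^\delta(s),s)\bigr)\rmd\bB(s),\quad t\in[0,T],
\end{equation*}
with $\mathfrak a,\mathfrak b$ as in \eqref{OpExLinSDE}.

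Next, I would verify that this linear SDE with random coefficients satisfies condition \textbf{(RC)} of \cite[p.~49]{YongZhou1999StochasticControls}. The required Lipschitz property for $\mathfrak a(\cdot,t)$ and $\mathfrak b(\cdot,t)$ is exactly the content of \cref{Lem:AA:5.3}. The required $\Lb^2_\Fs$-integrability at the origin is provided by \eqref{Eq:A:5.3-2} together with the fact that $\mathtt a_Y\in\Lb^2_\Fs(\Rb^d)$ and $\mathtt b_Y\in\Lb^2_\Fs(\Rb^{d\times m})$ by definition of $\X$, and $\bY_\circ$ is $\Fs_\circ$-measurable with $\bY_\circ\in L^2(\Omega;\Rb^d)$. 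Therefore \cite[Corollary~1.6.4]{YongZhou1999StochasticControls} (or the analogous result in \cite[Chapter~2, Theorem~3.4]{Mao2008SDE}) yields a unique solution $\bX^\delta\in\X$. This simultaneously establishes surjectivity and injectivity of $e_\bX(\bX,u)$ (the latter by applying the result with $\bY=0$).

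Finally, I would note that $e_\bX(\bX,u)\colon\X\to\X$ is bounded: for $\bX^\delta=\Phi(\bX^\delta_\circ,\mathtt a_\delta,\mathtt b_\delta)\in\X$ we have
\begin{equation*}
e_\bX(\bX,u)\bX^\delta=\Phi\bigl(\bX^\delta_\circ,\,\mathtt a_\delta-\mathfrak a(\bX^\delta,\cdot),\,\mathtt b_\delta-\mathfrak b(\bX^\delta,\cdot)\bigr),
\end{equation*}
and the uniform bounds \eqref{Eq:A:5.3-1:a}--\eqref{Eq:A:5.3-1:b} combined with the continuous embedding $\X\hookrightarrow C([0,T];L^2(\Omega;\Rb^d))$ from \cref{Le:ExtXb} show that the $\X$-norm of the right-hand side is controlled by $\|\bX^\delta\|_\X$. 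Since $e_\bX(\bX,u)$ is a bounded linear bijection between the Hilbert spaces $\X$ and $\X$, the bounded inverse theorem provides the boundedness of $e_\bX(\bX,u)^{-1}$.

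The main obstacle is really only the careful verification that the random-coefficient linear SDE falls into the scope of a known well-posedness result: this is why \cref{A:5.3} was imposed and why \cref{Lem:AA:5.3} was proved, so that the condition \textbf{(RC)} is satisfied. Once that is in place, bijectivity is immediate from the existence and uniqueness theorem, and boundedness of the inverse is an application of the open mapping theorem.
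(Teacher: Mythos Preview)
Your argument for bijectivity is essentially identical to the paper's: reduce $e_\bX(\bX,u)\bX^\delta=\bY$ to a linear SDE with random coefficients, verify condition \textbf{(RC)} via \cref{A:5.3} and \cref{Lem:AA:5.3}, and invoke the corresponding well-posedness result. One small point: the result you cite, \cite[Corollary~1.6.4]{YongZhou1999StochasticControls}, covers deterministic coefficients; since $\mathfrak a,\mathfrak b$ depend on the random process $\bX$, the paper is careful to use \cite[Theorem~6.16]{YongZhou1999StochasticControls} instead, which treats random coefficients under \textbf{(RC)}.

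The genuine difference is in how you obtain boundedness of $e_\bX(\bX,u)^{-1}$. You show that $e_\bX(\bX,u)$ itself is bounded (via \eqref{Eq:A:5.3-1} and the embedding $\X\hookrightarrow C([0,T];L^2(\Omega;\Rb^d))$) and then appeal to the open mapping theorem. The paper instead proves boundedness of the inverse directly: it takes $\bY$ with $\|\bY\|_\X\le 1$, uses the a~priori estimate $\sup_{t}\Eb[\|\bX^\delta(t)\|^2]\le C(1+\Eb[\|\bY_\circ\|^2])$ from \cite[Theorem~6.16]{YongZhou1999StochasticControls}, and then estimates $\|\bX^\delta\|_\X$ explicitly in terms of $L_{\mathfrak{ab}}$, $T$, and $C$. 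Your route is shorter and perfectly valid; the paper's route buys an explicit bound on the operator norm of the inverse, which is not needed for the subsequent arguments but is informative.
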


\begin{proof}
	Suppose that $(\bX,u)\in\X\times\Uad$ and $\bY\in\X$ are given arbitrarily. Then there exist a measurable initial condition $\bY_\circ$ and measurable coefficients $\tilde{\mathfrak a}\in\Lb^2_\mathscr F(\Rb^d)$, $\tilde{\mathfrak b}=[\tilde{\mathfrak b}_1|\ldots|\tilde{\mathfrak b}_m]\in\Lb^2_\mathscr F(\Rb^{d\times m})$ with
	\begin{align}
		\label{Y_Repr}
		\bY(t)=\bY_\circ+\int_0^t\tilde{\mathfrak a}(s)\,\mathrm ds+\int_0^t\tilde{\mathfrak b}(s)\,\mathrm d\bB(s)\quad\text{for }t\in[0,T].
	\end{align}
	We have to show that there exists a unique $\bX^\delta\in\X$ satisfying
	\begin{align}
		\label{OpEqEX}
		e_\bX(\bX,u)\bX^\delta=\bY\quad\text{in }\X.
	\end{align}
	By \cref{Le:eFrDer} and \eqref{Y_Repr}, equation \eqref{OpEqEX} is equivalent to
	\begin{align*}
		&\bX^\delta(\cdot)-\int_0^{(\cdot)}
		\underbrace{a_x(\bX(s),u(s),s)\bX^\delta(s)}_{\equalscolon\mathfrak a(\bX^\delta(s),s)\in\Lb^2_\mathscr F(\Rb^d)}\,\mathrm ds
		-\int_0^{(\cdot)}\underbrace{b_x(\bX(s),u(s),s)\bX^\delta(s)}_{\equalscolon\mathfrak b(\bX^\delta(s),s)\in\Lb^2_\mathscr F(\Rb^{d\times m})}\,\mathrm d\bB(s)\\
		&=\bY_\circ+\int_0^t\tilde{\mathfrak a}(s)\,\mathrm ds+\int_0^t\tilde{\mathfrak b}(s)\,\mathrm d\bB(s)\quad\text{for }t\in[0,T].
	\end{align*}
	We set
	\begin{align}
		\label{Eq:Setting_ab}
		\begin{aligned}
			\hat{\mathfrak a}(\bX^\delta(\cdot),\cdot)&=\mathfrak a(\bX^\delta(\cdot),\cdot)+\tilde{\mathfrak a}\in\Lb^2_\mathscr F(\Rb^d),\\
			\hat{\mathfrak b}(\bX^\delta(\cdot),\cdot)&=\mathfrak b(\bX^\delta(\cdot),\cdot)+\tilde{\mathfrak b}\in\Lb^2_\mathscr F(\Rb^{d\times m}).
		\end{aligned}
	\end{align}
	Then $\bX^\delta\in\X$ is the solution to the linear SDE with random coefficients
	\begin{align*}
		\left\{
		\begin{aligned}
			\mathrm d\bX^\delta(t)&=\hat{\mathfrak a}(\bX^\delta(t),t)\,\mathrm dt+\hat{\mathfrak b}(\bX^\delta(t),t)\,\mathrm d\bB(t)\quad\text{for }t\in(0,T],\\
			\bX^\delta(0)&=\bY_\circ.
		\end{aligned}
		\right.
	\end{align*}
	Utilizing \cref{A:5.3} and \cref{Lem:AA:5.3} it follows from \cite[Theorem 6.16]{YongZhou1999StochasticControls} that there exists a unique $\bX^\delta\in\X$ satisfying $\bX^\delta=e_\bX(\bX,u)^{-1}\bY\in\X$. Thus, $e_\bX(\bX,u)^{-1}$ is bijective. Moreover, we infer from \cite[Theorem 6.16]{YongZhou1999StochasticControls} that there is a constant $C>0$ such that
	\begin{align}
		\label{XdeltaBound}
		\sup_{t\in[0,T]}\Eb\big[{\|\bX^\delta(t)\|}_{\Rb^d}^2\big]\le C\big(1+\Eb\big[{\|\bY_\circ\|}_{\Rb^d}^2\big)
	\end{align}
	for all $\bY\in\X$. Applying \eqref{Eq:Setting_ab} and \cref{Lem:AA:5.3} we estimate
	\begin{align*}
		&\int_0^T {\|\hat{\mathfrak a}(\bX^\delta(t),t)\|}_{\Rb^d}^2\,\mathrm dt\le2\int_0^T {\|\tilde{\mathfrak a}(t)\|}_{\Rb^d}^2+{\|\mathfrak a(\bX^\delta(t),t)\|}_{\Rb^d}^2\,\mathrm dt\\
		&\le2\int_0^T {\|\tilde{\mathfrak a}(t)\|}_{\Rb^d}^2+2\,{\|\mathfrak a(\bX^\delta(t),t)-\mathfrak a(0,t)\|}_{\Rb^d}^2+2\,{\|\mathfrak a(0,t)\|}_{\Rb^d}^2\,\mathrm dt\\
		&\le2\int_0^T {\|\tilde{\mathfrak a}(t)\|}_{\Rb^d}^2\,\mathrm dt+4\int_0^TL_\mathfrak{ab}^2\,{\|\bX^\delta(t)\|}_{\Rb^d}^2+{\|\mathfrak a(0,t)\|}_{\Rb^d}^2\,\mathrm dt\quad\text{in }\Omega\text{ a.s.}
	\end{align*}
	Analogously, we find that
	\begin{align*}
		\int_0^T {\|\hat{\mathfrak b}(\bX^\delta(t),t,\cdot\,)\|}_\Fr^2\,\mathrm dt\le2\int_0^T {\|\tilde{\mathfrak b}(t)\|}_\Fr^2\,\mathrm dt+4\int_0^TL_\mathfrak{ab}^2\,{\|\bX^\delta(t)\|}_{\Rb^d}^2+{\|\mathfrak b(0,t)\|}_\Fr^2\,\mathrm dt\text{ in }\Omega\text{ a.s.}
	\end{align*}
	Let $\bY\in\X$ with $\|\bY\|_\X\le1$ be chosen arbitrarily. 
	Then, $\bY$ has a representation as in \eqref{Y_Repr} so that we have
	\begin{align}
		\label{Ybound}{\|\bY\|}_\X^2=\Eb\big[{\|\bY_\circ\|}_{\Rb^d}^2\big]+\Eb\bigg[\int_0^T {\|\tilde{\mathfrak a}(t)\|}_{\Rb^d}^2+{\|\tilde{\mathfrak b}(t)\|}_\Fr^2\,\mathrm dt\bigg]\le 1.
	\end{align}
	Utilizing \cref{A:5.3} as well as equations \eqref{Ybound} and \eqref{XdeltaBound} it follows that
	\begin{align*}
		&{\|e_\bX(\bX,u)^{-1}\bY\|}_\X^2={\|\bX^\delta\|}_\X^2\\
		&=\Eb\big[{\|\bY_\circ\|}_{\Rb^d}^2\big]+\Eb\bigg[\int_0^T {\|\hat{\mathfrak a}(\bX^\delta(t),t)\|}_{\Rb^d}^2+{\|\hat{\mathfrak b}(\bX^\delta(t),t)\|}_\Fr^2\,\mathrm dt\bigg]\\
		&\le2\bigg(\Eb\big[{\|\bY_\circ\|}_{\Rb^d}^2\big]+\Eb\bigg[\int_0^T {\|\tilde{\mathfrak a}(t)\|}_{\Rb^d}^2+{\|\tilde{\mathfrak b}(t)\|}_\Fr^2\,\mathrm dt\bigg]\bigg)+8L_\mathfrak{ab}^2\int_0^T\Eb\big[{\|\bX^\delta(t)\|}_{\Rb^d}^2\big]\,\mathrm dt\\
		&\quad+4\Eb\bigg[\int_0^T{\|\mathfrak a(0,t)\|}_{\Rb^d}^2+{\|\mathfrak b(0,t)\|}_\Fr^2\,\mathrm dt\bigg]\\
		&\le 2+8L_\mathfrak{ab}^2T\sup_{t\in[0,T]}\Eb\big[{\|\bX^\delta(t)\|}_{\Rb^d}\big]+4L_\mathfrak{ab}^2\le c_1+c_2\left(1+\Eb\big[{\|\bY_\circ\|}_{\Rb^d}^2\big]\right)\le c_1+2c_2
	\end{align*}
	for constants $c_1\colonequals 2+4L_\mathfrak{ab}^2$ and $c_2\colonequals 8L_\mathfrak{ab}^2TC$ which do not depend on $\bY$. Consequently,
	\begin{align*}
		{\|e_\bX(\bX,u)^{-1}\|}_{L(\X)}=\sup_{\|\bY\|_\X\le 1}{\|e_\bX(\bX,u)^{-1}\bY\|}_\X\le c_1+2c_2,
	\end{align*}
	where $L(\X)$ denotes the Banach space of all linear and bounded mappings from $\X$ to $\X$ supplied by the usual operator norm. This implies that the bijective mapping $e_\bX(\bX,u)^{-1}$ is also bounded.
\end{proof}

\begin{remark}
	It follows from \cref{Co:Bijectivity} that the Fr\'echet derivative
	\begin{align*}
		e'(\bX,u)=\left(e_\bX(\bX,u) \, \vert \, e_u(\bX,u) \right):\X\times\U\to \X
	\end{align*}
	is surjective. This implies a constraint qualification which allows us to formulate first-order necessary optimality conditions for \eqref{eq:SDE_Opti_OU}; cf., e.g., \cite[p.~243]{Lue69}.
\end{remark}

\subsection{Fr\'echet differentiablity of the objective}

Next, we turn to the cost functional defined in \eqref{Ex:Cost}; c.f. \cite[Lemma 6.3.5]{Gross2015applications}.

\begin{lemma}
	\label{Le:JFrDer}
	Let \cref{Assumption:Diffbarkeit}-\ref{AssumptionCost} be satisfied. 
	Then the cost functional $J$ is continuously Fr\'echet differentiable at every $(\bX,u)\in\X\times\Uad$. Introducing
	\begin{align}
		\label{Formulas_g}
		\begin{aligned}
			\bm g_1(t)&=\Cm'\big(\Eb[\bX(t)]\big)^\top\big(\Cm(\Eb[\bX(t)])-\cd(t)\big)\in\Rb^d\quad \text{for }t\in[0,T],\\
			\bm g_2&=\Cm'\big(\Eb[\bX(T)]\big)^\top\big(\Cm(\Eb[\bX(T)])-\cdT\big)\in\Rb^d
		\end{aligned}
	\end{align}
	we have
	\begin{subequations}
		\begin{align}
			\label{J_PartDerX} 
			{\langle J_\bX(\bX,u),\bX^\delta\rangle}_{\X',\X}&=\Eb\bigg[\int_0^T\bm g_1(t)^\top \bX^\delta(t)\,\mathrm dt\bigg]+\Eb\big[\bm g_2^\top \bX^\delta(T)\big]&&\text{for all }\bX^\delta\in\X,\\
			\label{J_PartDerU} 
			{\langle J_u(\bX,u),u^\delta\rangle}_{\U',\U}&=\kappa\int_0^Tu(t)^\top u^\delta(t)\,\mathrm dt&&\text{for all }u^\delta\in\U.
		\end{align}
	\end{subequations}
\end{lemma}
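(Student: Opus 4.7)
\textbf{Proposal for the proof of \cref{Le:JFrDer}.}

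My plan is to split $J = J_1 + J_2 + J_3$, where
\begin{align*}
 J_1(\bX)&=\tfrac12\int_0^T\|\Cm(\Eb[\bX(t)])-\cd(t)\|_{\Rb^\ell}^2\,\mathrm dt,\\
 J_2(\bX)&=\tfrac12\|\Cm(\Eb[\bX(T)])-\cdT\|_{\Rb^\ell}^2,\qquad J_3(u)=\tfrac\kappa2\,\|u\|_\U^2,
\end{align*}
and to establish the claim for each summand via the chain rule. The term $J_3$ is a standard quadratic functional on the Hilbert space $\U$ and its continuous Fréchet derivative is $J_3'(u)u^\delta=\kappa\langle u,u^\delta\rangle_\U$, which is exactly \eqref{J_PartDerU}.

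For $J_1$ and $J_2$ I would factor them through deterministic expectation maps. By \cref{Le:ExtXb}~1) we have $\X\hookrightarrow C([0,T];L^2(\Omega;\Rb^d))$, and composing with $\Eb[\,\cdot\,]\colon L^2(\Omega;\Rb^d)\to\Rb^d$ (continuous by H\"older, cf. \cref{Re:EstExpVal}) shows that
\begin{align*}
E\colon\X\to C([0,T];\Rb^d),\quad \bX\mapsto\Eb[\bX(\cdot)],\qquad E_T\colon\X\to\Rb^d,\quad \bX\mapsto\Eb[\bX(T)]
\end{align*}
are linear and bounded. On the target side, I would verify that
\begin{align*}
\Psi_1\colon C([0,T];\Rb^d)\to\Rb,\quad \varphi\mapsto\tfrac12\int_0^T\|\Cm(\varphi(t))-\cd(t)\|_{\Rb^\ell}^2\,\mathrm dt
\end{align*}
is continuously Fréchet differentiable with $\Psi_1'(\varphi)\psi=\int_0^T(\Cm(\varphi(t))-\cd(t))^\top\Cm'(\varphi(t))\psi(t)\,\mathrm dt$, and analogously that $\Psi_2(\xi)=\tfrac12\|\Cm(\xi)-\cdT\|_{\Rb^\ell}^2$ is $C^1$ on $\Rb^d$ with $\Psi_2'(\xi)\eta=(\Cm(\xi)-\cdT)^\top\Cm'(\xi)\eta$. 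Both facts follow from \cref{Assumption:Diffbarkeit}-\ref{AssumptionCost}, using a first-order Taylor expansion combined with the fact that $\Cm'$ is uniformly continuous on bounded sets (to control the remainder uniformly in $t$ when $\varphi$ varies in a bounded subset of $C([0,T];\Rb^d)$). Then the chain rule yields $J_1=\Psi_1\circ E$, $J_2=\Psi_2\circ E_T$ are $C^1$ with
\begin{align*}
J_1'(\bX)\bX^\delta=\int_0^T\bm g_1(t)^\top\Eb[\bX^\delta(t)]\,\mathrm dt,\qquad J_2'(\bX)\bX^\delta=\bm g_2^\top\Eb[\bX^\delta(T)].
\end{align*}

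The last step is to rewrite these expressions in the form \eqref{J_PartDerX}. Because $\bm g_1(t)$ and $\bm g_2$ are deterministic, linearity of the expectation gives $\bm g_1(t)^\top\Eb[\bX^\delta(t)]=\Eb[\bm g_1(t)^\top\bX^\delta(t)]$ and $\bm g_2^\top\Eb[\bX^\delta(T)]=\Eb[\bm g_2^\top\bX^\delta(T)]$. Since $\bm g_1\in C([0,T];\Rb^d)$ (by continuity of $t\mapsto\Eb[\bX(t)]$, $\Cm$, $\Cm'$ and $\cd$) and $\bX^\delta\in L^2(\Omega\times(0,T);\Rb^d)$ via \eqref{embedding-X}, Fubini's theorem permits swapping the time integral and the expectation, which delivers \eqref{J_PartDerX}.

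The main technical hurdle will be the clean verification that $\Psi_1$ is Fréchet (not just Gateaux) differentiable and, more importantly, that its derivative depends continuously on $\varphi\in C([0,T];\Rb^d)$, since $\Cm$ is only assumed $C^1$ without global growth control; I would overcome this by restricting to bounded subsets of $C([0,T];\Rb^d)$ (on which $\Cm'$ is uniformly continuous) and invoking dominated convergence for the integral in $\Psi_1$. Everything else is routine chain-rule bookkeeping together with the embeddings already established in \cref{Le:ExtXb} and \cref{Re:EstExpVal}.
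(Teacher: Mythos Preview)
Your proposal is correct and follows essentially the same approach as the paper: compute the derivative via the chain rule through the composition $\Cm\circ\Eb[\,\cdot\,]$, then rewrite using linearity of the expectation and Fubini to reach \eqref{J_PartDerX}. In fact, the paper's proof is considerably terser---it writes down the directional derivative formula directly and declares \eqref{J_PartDerU} ``straightforward''---so your factorization through $E$, $E_T$ and the deterministic functionals $\Psi_1$, $\Psi_2$, together with the uniform-continuity argument for $\Cm'$ on bounded sets, actually supplies rigor that the paper leaves implicit.
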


\begin{proof}
	Notice that the Jacobian $\Cm'(x)$ belongs to $\Rb^{\ell\times d}$ for every $x\in\Rb^d$. 
	Let $(\bX,u)\in\X\times\Uad$ be chosen arbitrarily.	Then, it follows that
	\begin{align*}
		{\langle J_\bX(\bX,u),\bX^\delta\rangle}_{\X',\X}&=\int_0^T\big(\Cm(\Eb[\bX(t)])-\cd(t)\big)^\top \Cm'\big(\Eb[\bX(t)]\big)\Eb\big[\bX^\delta(t)\big]\,\mathrm dt\\
		&\quad+\big(\Cm(\Eb[\bX(T)])-\cdT\big)^\top \Cm'\big(\Eb[\bX(T)]\big)\Eb\big[\bX^\delta(T)\big]\\
		&=\int_0^T\bm g_1(t)^\top \Eb\big[\bX^\delta(t)\big]\,\mathrm dt+\bm g_2^\top \Eb\big[\bX^\delta(T)\big]\\
		&=\Eb\bigg[\int_0^T\bm g_1(t)^\top \bX^\delta(t)\,\mathrm dt\bigg]+\Eb\big[\bm g_2^\top \bX^\delta(T)\big]\quad\text{for all }\bX^\delta\in\X,
	\end{align*}
	which immediately gives \eqref{J_PartDerX}. The proof of \eqref{J_PartDerU} is straightforward. 
\end{proof}

\subsection{Derivation of the gradient for the reduced cost}
\label{Sec:RedGradCont}

In the sequel we follow \cite[Section~1.6.2]{HPUU08} and \cite[Section~6.3]{Gross2015applications}. We suppose that \cref{Assumption:Diffbarkeit,A:5.3} hold. Let $u\in\Uad$ be given and $\bX_u=\mathcal S(u)\in\X$. 
Then, $e(\mathcal S(u),u)=0$ is valid in $\X$. 
Due to \cref{Le:eFrDer}, the mapping $e$ is Fr\'echet differentiable. 
Thus, we have
\begin{align}
	\label{OptCond-1}
	\begin{aligned}
		0=\frac{\mathrm d}{\mathrm du}\big[e(\mathcal S(u),u)\big]u^\delta
		&=
		e_\bX(\mathcal S(u),u)\mathcal S'(u)u^\delta+e_u(\mathcal S(u),u)u^\delta\quad\text{in }\X
	\end{aligned}
\end{align}
for all $u^\delta\in\U$. 
From \cref{Co:Bijectivity} and \eqref{OptCond-1}, we derive the formula
\begin{align}
	\label{OptCond-2}
	\mathcal S'(u)=-e_\bX(\mathcal S(u),u)^{-1}e_u(\mathcal S(u),u)\quad\text{in }L(\U,\X'),
\end{align}
where $L(\U,\X')$ stands for the Banach space of all linear and bounded operators from $\U$ to $\X$. 
Furthermore, it follows that
\begin{align*}
	{\langle\hat J'(u),u^\delta\rangle}_{\U',\U}&=\frac{\mathrm d}{\mathrm dt} \big[J(\mathcal S(u),u)\big]u^\delta={\langle J_\bX(\mathcal S(u),u),\mathcal S'(u)u^\delta\rangle}_{\X',\X}+{\langle J_u(\mathcal S(u),u),u^\delta\rangle}_{\U',\U}\\
	&={\langle \mathcal S'(u)^*J_\bX(\mathcal S(u),u)+J_u(\mathcal S(u),u),u^\delta\rangle}_{\U',\U}
\end{align*}
for all $u^\delta\in\U$, which yields directly
\begin{align}
	\label{OptCond-3}
	\hat J'(u)=\mathcal S'(u)^*J_\bX(\mathcal S(u),u)+J_u(\mathcal S(u),u)\quad\text{in }\U'.
\end{align}
Next, we introduce the Lagrange multiplier $\mathcal G\in\X'$ by
\begin{align}
	\label{OptCond-4}
	e_\bX(\mathcal S(u),u)^*\mathcal G=-J_\bX(\mathcal S(u),u)\quad\text{in }\X',
\end{align}
where $e_\bX(\mathcal S(u),u)^*:\X'\to\X'$ stands for the dual of $e_\bX(\mathcal S(u),u):\X\to\X$ satisfying
\begin{align*}
	{\langle e_\bX(\mathcal S(u),u)^*\mathcal F,\bX^\delta\rangle}_{\X',\X}={\langle\mathcal F,e_\bX(\mathcal S(u),u)\bX^\delta\rangle}_{\X',\X}\quad\text{for all }(\mathcal F,\bX^\delta)\in\X'\times\X.
\end{align*}
It follows from \eqref{OptCond-4} that
\begin{align}
	\label{OptCond-5}
	\mathcal G=-e_\bX(\mathcal S(u),u)^{-*}J_\bX(\mathcal S(u),u)\quad\text{in }\X'.
\end{align}
In \eqref{OptCond-5}, we denote by $e_\bX(\mathcal S(u),u)^{-*}:\X'\to\X'$ the inverse of $e_\bX(\mathcal S(u),u)^*$. 
Moreover, \eqref{OptCond-5} is equivalent to
\begin{align}
	\label{OptCond-6}
	{\langle\mathcal G,\bX^\delta\rangle}_{\X',\X}&=-{\langle e_\bX(\mathcal S(u),u)^{-*}J_\bX(\mathcal S(u),u),\bX^\delta\rangle}_{\X',\X}\quad\text{for all }\bX^\delta\in\X.
\end{align}
Using \eqref{OptCond-2} and \eqref{OptCond-5} we find that
\begin{align}
	\label{OptCond-6A}
	\begin{aligned}
		\mathcal S'(u)^*J_\bX(\mathcal S(u),u)&=\big[-e_\bX(\mathcal S(u),u)^{-1}e_u(\mathcal S(u),u)\big]^*J_\bX(\mathcal S(u),u)\\
		&=e_u(\mathcal S(u),u)^*\big[-e_\bX(\mathcal S(u),u)^{-*}J_\bX(\mathcal S(u),u)\big]\\
		&=e_u(\mathcal S(u),u)^*\mathcal G\quad\text{in }\U',    
	\end{aligned}
\end{align}
where the operator $e_u(\mathcal S(u),u)^*$ maps from $\X'$ to $\U'$. Inserting the last expression in \eqref{OptCond-3} we derive
\begin{align*}
	\hat J'(u)=\mathcal S'(u)^*J_\bX(\mathcal S(u),u)+J_u(\mathcal S(u),u)=e_u(\mathcal S(u),u)^*\mathcal G+J_u(\mathcal S(u),u)\quad\text{in }\U'.
\end{align*}
For arbitrary $\bX^\delta\in\X$ let $\bY^\delta=\bY^\delta(\bX^\delta)\in\X$ be the unique solution to
\begin{align}
	\label{OptCond-7}
	e_\bX(\mathcal S(u),u)\bY^\delta=\bX^\delta\quad\text{in }\X.
\end{align}
In particular, we have
\begin{equation}\label{eq-y-versus-x}
	\bY^\delta(t)-\int_0^ta_x(\bX(s),u(s),s)\bY^\delta(s)\,\mathrm ds-\int_0^tb_x(\bX(s),u(s),s)\bY^\delta(s)\,\mathrm d\bB(s)=\bX^\delta(t)
\end{equation}
for all $t\in[0,T]$, where we have applied the notation introduced in \cref{Rem:TensorProd}. Then, we infer from \eqref{OptCond-6} that
\begin{align}
	\label{OptCond-8}
	\begin{aligned}
		{\langle\mathcal G,\bX^\delta\rangle}_{\X',\X}&=-{\langle e_\bX(\mathcal S(u),u)^{-*}J_\bX(\mathcal S(u),u),e_\bX(\mathcal S(u),u)\bY^\delta\rangle}_{\X',\X}\\
		&=-{\langle J_\bX(\mathcal S(u),u),\bY^\delta\rangle}_{\X',\X}\quad\text{for all }\bX^\delta\in\X        
	\end{aligned}    
\end{align}
and for the associated $\bY^\delta\in\X$ solving \eqref{OptCond-7}. Now we study the specific problem introduced in \cref{Ex:Cost}. 
Using \eqref{J_PartDerX} we have
\begin{align}
	\label{OptCond-9}
	{\langle\mathcal G,\bX^\delta\rangle}_{\X',\X}=\Eb\bigg[\int_0^T\bm g_1(t)^\top \bY^\delta(t)\,\mathrm dt\bigg]+\Eb\big[\bm g_2^\top \bY^\delta(T)\big]
\end{align}
with $\bm g_1$ and $\bm g_2$ from \eqref{Formulas_g}.

In the following, we want to represent the functional $\mathcal G\in\X'$ in the form $\mathcal G = \ell_{\bL}$ (see Lemma~\ref{Le:ExtXb}) with a suitably 
chosen $\bL\in\X$.  
Slightly changing the notation, we write $\bL = \Phi(\bL_\circ, -\bA, \bZ)$ with $(\bL_\circ, -\bA, \bZ)\in\Xb$ (see \eqref{eq-defXb}), and get
\begin{align}
	\label{eq-matrix-inner-product-a}
	\ell_{\bL}(\bX^\delta) = \Eb\bigg[\bX^\delta(T)^\top\bL(T)+\int_0^T\bX^\delta(t)^\top\bA(t)\,\mathrm dt\bigg] = \big\langle (\bX_\circ, \mathtt a,\mathtt b), (\bL_\circ,\bL, \bZ)\big\rangle_{\Xb}
\end{align}
for $\bX^\delta=\Phi(\bX_\circ, \mathtt a, \mathtt b)$, where the second equality follows from \cref{Le:ExtXb}. 
Note that this can be seen as a Riesz representation in the space $\Xb$. 
Let $\bY^\delta  = \Phi(\bY_\circ, \widetilde{\mathtt a},\widetilde{\mathtt b})$. 
From \eqref{eq-y-versus-x}, we know that we have
\begin{align}
	\label{eq-matrix-inner-product-b}
	\bX^\delta = \bY^\delta - \Phi(0, a_x\bY^\delta, b_x\bY^\delta),
\end{align}
where we abbreviated $a_x = a_x(\bX(\cdot), u(\cdot), \cdot)$ and $b_x = b_x(\bX(\cdot), u(\cdot), \cdot)$.  
Recall  that $b(\bX,u,\cdot)$ $= (b_1(\bX,u,\cdot)\vert \ldots \vert b_m(\bX,u,\cdot))$ and  $\bZ = (\bZ_1\vert\ldots\vert\bZ_m)$. 
Therefore, the Frobenius inner product, defined in \eqref{eq:definition_frobenius_scalarproduct},
of $b_x \bY^\delta$ and $\bZ$ is given by
\begin{align}\label{eq-matrix-inner-product}
	{\langle b_x\bY^\delta, \bZ\rangle}_\Fr=\sum_{j=1}^m (b_{jx}\bY^\delta)^\top \bZ_j=(\bY^\delta)^\top \Big( \sum_{j=1}^m b_{jx}^\top\bZ_j\Big).
\end{align}
With \eqref{eq-matrix-inner-product-a}-\eqref{eq-matrix-inner-product}, we obtain
\begin{align*}
	\ell_{\bL}(\bX^\delta) & =  \ell_{\bL}(\bY^\delta)- \big\langle (0, a_x\bY^\delta, b_x\bY^\delta), (\bL_\circ, \bL, \bZ)\big\rangle_{\Xb} \\
	&=\Eb\bigg[\int_0^T\bY^\delta(t)^\top\big(\bA(t)-a_x(\bX(t),u(t),t)^\top\bL(t)-\sum_{j=1}^m b_{jx}(\bX(t),u(t),t)^\top\bZ_j(t)\big)\,\mathrm dt\bigg]\\
	&\quad+\Eb\big[\bY^\delta(T)^\top\bL(T)\big].
\end{align*} 
Comparing this with \eqref{OptCond-9}, we see that  the representation ${\langle\mathcal G,\bX^\delta\rangle}_{\X',\X}= \ell_{\bL}(\bX^\delta)$ holds for all $\bX^\delta\in \X$ if
\begin{align*}
	& \Eb\bigg[\int_0^T\bY^\delta(t)^\top\Big(\bA(t)-a_x(\bX(t),u(t),t)^\top\bL(t)-\sum_{j=1}^m b_{jx}(\bX(t),u(t),t)^\top\bZ_j(t)-\bm g_1(t)\Big)\,\mathrm dt\bigg]\\
	& \qquad +\Eb\big[\bY^\delta(T)^\top\big(\bL(T)-\bm g_2\big)\big]=0
\end{align*}
which yields
\begin{align*}
	\bA(\cdot)&=a_x(\bX(\cdot),u(\cdot),\cdot)^\top\bL(\cdot)+\sum_{j=1}^m b_{jx}(\bX(\cdot),u(\cdot),\cdot)^\top\bZ_j(\cdot)+\bm g_1(\cdot)&&\text{in }\Lb^2_\mathscr F(\Rb^d),\\
	\bL(T)&=\bm g_2.
\end{align*}
As $\bL = \Phi(\bL_\circ, -\bA, \bZ)$, we see that $\bL$ satisfies the following (linear) BSDE (cf., e.g., \cite{MaYong1999FBSDEapplic}):
\begin{subequations}
	\label{OptCond-12}
	\begin{align}
		\label{OptCond-12a}
		\left\{
		\begin{aligned}
			\mathrm d\bL(t)&=-\Big(\bm g_1(t)+a_x(\bX(t),u(t),t)^\top\bL(t)+\sum_{j=1}^m b_{jx}(\bX(t),u(t),t)^\top\bZ_j(t)\Big)\,\mathrm dt\\
			&\quad+\bZ(t)\,\mathrm d\bB(t)\quad\text{for all }t\in[0,T),\\
			\bL(T)&=\bm g_2    .
		\end{aligned}
		\right.
	\end{align}
	Moreover, if the solution pair $(\bL,\bZ)$ to \eqref{OptCond-12a} is computed, then
	\begin{align}
		\label{OptCond-12b}
		\bL_\circ=\bL(0)
	\end{align}
\end{subequations}
holds. Now we turn to the representation of the gradient $\nabla\hat J(u)\in\U$ of the reduced cost functional $\hat J$ at a given admissible control $u\in\Uad$. 
Recall that
\begin{align*}
	{\langle\nabla\hat J(u),u^\delta\rangle}_\U={\langle\hat J'(u),u^\delta\rangle}_{\U',\U}\quad\text{for all }u^\delta\in\U
\end{align*}
holds, i.e., the gradient $\nabla\hat J(u)\in\U$ is the Riesz representant of $\hat J'(u)\in\U'$. 
Utilizing \eqref{OptCond-6A} and $\mathcal G = \ell_\Lambda$, we find 
\begin{align*}
	&{\langle \mathcal S'(u)^*J_\bX(\mathcal S(u),u),u^\delta\rangle}_{\U',\U}
	={\langle e_u(\mathcal S(u),u)^*\mathcal G,u^\delta\rangle}_{\U',\U}
	={\langle \mathcal G,e_u(\mathcal S(u),u)u^\delta\rangle}_{\X',\X}\\
	& \quad = \ell_\bL(e_u(\mathcal S(u),u)u^\delta).
\end{align*}
By Lemma~\ref{Le:eFrDer}, we know that $e_u(\mathcal S(u),u)u^\delta= \Phi(0,\tilde{\mathtt a}, \tilde{\mathtt b})\in \X$ with 
\begin{align*}
	\tilde{\mathtt a}(t) \colonequals -a_u(\bX(t), u(t),t) u^\delta(t), \quad 
	\tilde{\mathtt b}(t) \colonequals -b_u(\bX(t), u(t), t) u^\delta(t),\quad t\in (0,T).
\end{align*}
From this and Lemma~\ref{Le:ExtXb}-2), we obtain
\begin{align}
	&{\langle \mathcal S'(u)^*J_\bX(\mathcal S(u),u),u^\delta\rangle}_{\U',\U} = \ell_\bL(\Phi(0,\tilde{\mathtt a},\tilde{\mathtt b})) =
	\big\langle(0,\tilde{\mathtt a},\tilde{\mathtt b}), (\bL_\circ,\bL,\bZ)\big\rangle_\Xb\nonumber\\
	& \quad = \langle \tilde{\mathtt a}, \bL\rangle_{\Lb^2_\mathscr F(\Rb^d)} + \langle 
	\tilde{\mathtt b}, \bZ\rangle_{\Lb^2_\mathscr F(\Rb^{d\times m})}.\label{eq5-1}
\end{align}
Noting that the control $u$ is deterministic, we  rewrite the first term on the right-hand side  as
\begin{align*}
	{\langle \tilde{\mathtt a}, \bL\rangle}_{\Lb^2_\mathscr F(\Rb^d)} & = {\langle -a_u(\bX(\cdot), u(\cdot),\cdot) u^\delta, \bL\rangle}_{\Lb^2_\mathscr F(\Rb^d)} 
	= {\langle u^\delta, -a_u(\bX(\cdot), u(\cdot),\cdot)^\top \bL\rangle}_{\Lb^2_\mathscr F(\Rb^r)} \\
	& = \Eb \bigg[ 
	\int_0^T  u^\delta(t)^\top \big(-a_u(\bX(t),u(t),t) \big)^\top \bL(t)\,\mathrm dt  
	\bigg]\\
	&= 
	\int_0^T  u^\delta(t)^\top \Eb \Big[ \big(-a_u(\bX(t),u(t),t) \big)^\top \bL(t)\Big]\,\mathrm dt  \\
	& ={\left\langle \Eb \Big[ -a_u(\bX(\cdot),u(\cdot),\cdot)^\top \bL
		\Big],u^\delta\right\rangle}_\U.
\end{align*}
In the same way, we obtain for the second term on the right-hand side of \eqref{eq5-1}
\begin{align*}
	{\langle \tilde{\mathtt b}, \bZ\rangle}_{\Lb^2_\mathscr F(\Rb^{d\times m})}&={\langle 
		-b_u(\bX(\cdot), u(\cdot), \cdot) u^\delta, \bZ\rangle}_{\Lb^2_\mathscr F(\Rb^{d\times m})} 
	\\
	&=\bigg\langle \Eb \Big[ - \sum_{j=1}^m b_{ju}(\bX(\cdot),u(\cdot),\cdot)^\top \bZ_j
	\Big],u^\delta\bigg\rangle_{\U};
\end{align*}
cf. \eqref{eq-matrix-inner-product} for the description of the Frobenius inner product. Inserting this into \eqref{eq5-1} and using \eqref{OptCond-3} and \eqref{J_PartDerU}, we see that
\begin{align}
	{\langle\hat J'(u),u^\delta\rangle}_{\U',\U} & = \langle \mathcal S'(u)^*J_\bX(\mathcal S(u),u)+J_u(\mathcal S(u),u) , u^\delta\rangle_{\U',\U} \notag \\
	& = \bigg\langle \Eb \Big[ -a_u(\bX,u,\cdot)^\top \bL-\sum_{j=1}^m b_{ju}(\bX,u,\cdot)^\top \bZ_j
	\Big]+\kappa u,u^\delta\bigg\rangle_{\U} .
	\label{eq:gradient_u}
\end{align}
Summarizing, we have proved the following theorem.

\begin{theorem}
	\label{Th:GradForm}
	Suppose that \cref{Assumption:Diffbarkeit,A:5.3} hold. Moreover, the cost functional is given by \eqref{eq:objective_calib} and \eqref{OptCond-12} is uniquely solvable. Then, for any $u\in\Uad$ the gradient of $\hat J$ is given as
	\begin{align}
		\label{GradRedGradCont}
		\nabla \hat J(u)=\kappa u-\Eb \Big[a_u(\bX(\cdot),u(\cdot),\cdot)^\top \bL+\sum_{j=1}^m b_{ju}(\bX(\cdot),u(\cdot),\cdot)^\top \bZ_j\Big]\in\U,
	\end{align}
	where the pair $(\bL,\bZ)\in\X\times\Lb^2_\mathscr F(\Rb^{d\times m})$ solves \eqref{OptCond-12}.
\end{theorem}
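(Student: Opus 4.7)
The plan is to organize the calculus already outlined in \cref{Sec:RedGradCont} into a clean three-stage argument. First I would establish differentiability of the reduced cost $\hat J$ by combining \cref{Le:eFrDer} (differentiability of the constraint $e$) with \cref{Co:Bijectivity} (boundedness of $e_\bX(\mathcal S(u),u)^{-1}$), which via the implicit function theorem yields Fréchet differentiability of the control-to-state map $\mathcal S$ together with the sensitivity formula $\mathcal S'(u)=-e_\bX(\mathcal S(u),u)^{-1}e_u(\mathcal S(u),u)$. Using \cref{Le:JFrDer} and the chain rule then produces
\begin{align*}
\hat J'(u)=\mathcal S'(u)^*J_\bX(\mathcal S(u),u)+J_u(\mathcal S(u),u)\quad\text{in }\U'.
\end{align*}

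Second, to avoid working with $\mathcal S'(u)^*$ directly, I would introduce the adjoint functional $\mathcal G\in\X'$ by $e_\bX(\mathcal S(u),u)^*\mathcal G=-J_\bX(\mathcal S(u),u)$ and test it against $\bX^\delta=e_\bX(\mathcal S(u),u)\bY^\delta$ for arbitrary $\bY^\delta\in\X$. Using the explicit forms of $J_\bX$ with $\bm g_1,\bm g_2$ from \eqref{Formulas_g} one obtains the representation
\begin{align*}
{\langle\mathcal G,\bX^\delta\rangle}_{\X',\X}=\Eb\bigg[\int_0^T\bm g_1(t)^\top\bY^\delta(t)\,\mathrm dt\bigg]+\Eb\big[\bm g_2^\top\bY^\delta(T)\big].
\end{align*}
The crucial step is now to identify $\mathcal G$ as a functional of the form $\ell_\bL$ from \cref{Le:ExtXb}-2) with $\bL=\Phi(\bL_\circ,-\bA,\bZ)\in\X$. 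Writing $\bX^\delta=\bY^\delta-\Phi(0,a_x\bY^\delta,b_x\bY^\delta)$ from the defining relation \eqref{eq-y-versus-x} and expanding the Frobenius inner product via \eqref{eq-matrix-inner-product}, a term-by-term comparison of the two representations of $\ell_\bL(\bX^\delta)$ forces the drift coefficient of $\bL$ to equal $-\bm g_1-a_x^\top\bL-\sum_{j=1}^mb_{jx}^\top\bZ_j$ and the terminal value to equal $\bm g_2$; this is precisely the backward SDE system \eqref{OptCond-12}, whose unique solvability is assumed.

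Third, with $\mathcal G=\ell_\bL$ determined, I would compute $\mathcal S'(u)^*J_\bX=e_u(\mathcal S(u),u)^*\mathcal G$ by applying $\ell_\bL$ to $e_u(\mathcal S(u),u)u^\delta=\Phi(0,-a_uu^\delta,-b_uu^\delta)$. Invoking the bilinear identity \eqref{formula-functional} of \cref{Le:ExtXb} yields
\begin{align*}
{\langle\mathcal S'(u)^*J_\bX,u^\delta\rangle}_{\U',\U}=\bigg\langle\Eb\Big[-a_u(\bX,u,\cdot)^\top\bL-\sum_{j=1}^mb_{ju}(\bX,u,\cdot)^\top\bZ_j\Big],u^\delta\bigg\rangle_\U,
\end{align*}
where the expectation may be pulled inside the time integral because $u^\delta$ is deterministic. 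Adding the contribution $\kappa u$ from $J_u(\mathcal S(u),u)$ given by \eqref{J_PartDerU} produces exactly \eqref{GradRedGradCont}, and the Riesz identification of $\hat J'(u)\in\U'$ with $\nabla\hat J(u)\in\U$ completes the proof.

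The main technical obstacle I anticipate is the identification of the adjoint functional $\mathcal G$ with a specific element $\bL\in\X$ through the representation from \cref{Le:ExtXb}-2): this requires careful bookkeeping of the Frobenius-valued diffusion component $\bZ$ and a correct use of the Itô formula underlying \eqref{formula-functional}, together with matching drift, diffusion and terminal data, each of which must be decoded from the test-function identity. Everything else is a routine application of the chain rule, the bijectivity result of \cref{Co:Bijectivity}, and Fubini's theorem.
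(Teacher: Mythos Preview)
Your proposal is correct and follows essentially the same route as the paper: the three stages you describe --- chain rule via \cref{Le:eFrDer}, \cref{Co:Bijectivity} and \cref{Le:JFrDer}; identification of the adjoint functional $\mathcal G$ with $\ell_\bL$ using \cref{Le:ExtXb}-2) to derive the BSDE \eqref{OptCond-12}; and evaluation of $e_u^*\mathcal G$ via \eqref{formula-functional} with the expectation pulled through the deterministic $u^\delta$ --- coincide step by step with the derivation in \cref{Sec:RedGradCont} that the paper summarizes as the proof of the theorem.
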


\begin{remark}
	\label{rem:ComparisonGradients}
	Let us compare $\nabla \hat J$ for the case $\U = \Uc$ with the gradient $\nabla \hat J^h$ (given in \eqref{GradRedGradDisc} for the discretized problem. For this case we can write \eqref{GradRedGradCont} as
	\begin{align}
		\label{GradRedGradContCalibration}
		\nabla \hat J(u)=\kappa u-\Eb \Big[ \int_0^T a_u(\bX(t),u,t)^\top \bL+\sum_{j=1}^m b_{ju}(\bX(t),u,t)^\top \bZ_j \, \mathrm{d} t \Big].
	\end{align}
	Recall that the discrete gradient \eqref{GradRedGradDisc} is
	\begin{align*}
		\nabla\hat J^h(u)
		=\kappa u&-\Eb^M\bigg[ \sum_{\nu=1}^{N-1}a_u(\bX^\mu_\nu,u,t_\nu)^\top\bL^\mu_{\nu+1}\Delta t\bigg]
		\\
		&+\Eb^M\bigg[\sum_{\nu=1}^{N-1}\sum_{j=1}^m b_{ju}(\bX^\mu_\nu,u,t_\nu)^\top \left( \Delta B_{\nu,j}^\mu \bL^\mu_{\nu+1}\right)\bigg].
	\end{align*}
	The first difference between \eqref{GradRedGradDisc} and \eqref{GradRedGradContCalibration} is that the integral over time in the continuous gradient turns to a sum over the time intervals discrete gradient. 
	This is not surprising.
	However, we point out that in the discrete case, the coefficient function $a_u$ and the adjoint variable $\bL$ are not evaluated at the same timestep, but at $t_{\nu}$ and $t_{\nu+1}$, respectively.
	The crucial difference is in the last part. 
	In the continuous case, the stochastic variable $\bZ$ appears that is absent in the discrete gradient. 
	Instead, in the latter case, the (pre-computed) Brownian increments $\Delta B$ multiplied with $\bL$  appear. 
	This is consistent with the numerical scheme to solve backward SDEs presented in \cite[Section 3.3]{Gong2017efficientSGD}.
	Furthermore, recall that the time increment is incorporated in the Brownian increment. 
	Hence, the discretization factor $\Delta t$ only appears in the deterministic part.
    
    Our discussion raises the question whether optimization and discretization commute in the sense that the optimize-before-discretize and discretize-before-optimize approaches lead to similar results for sufficiently fine mesh sizes. 
    We are currently investigating this question.
\end{remark}

The following result follows directly from \cref{Th:GradForm} and \cite[Theorem 1.46]{HPUU08}.

\begin{corollary}
	Suppose that all assumptions of \cref{Th:GradForm} hold. We assume that $\bar u\in\Uad$ is a local solution to \eqref{eq:SDE_Opti_OU_red}. Then, first-order necessary optimality conditions are given by the variational inequality
	\begin{align*}
		\bigg\langle\kappa \bar u+\Eb \Big[ -a_u(\bXb,\bar u,\cdot)^\top \bm{\bar\Lambda}-\sum_{j=1}^m b_{ju}(\bXb,\bar u,\cdot)^\top \bm{\bar Z}_j
		\Big],u-\bar u\bigg\rangle_{\U}\ge0\quad\text{for all }u\in\Uad,
	\end{align*}
	where $\bXb=\Sm(\bar u)$ holds and $(\bL,\bZ)$ solves \eqref{OptCond-12} for $(\bX,u)=(\bXb,\bar u)$.
\end{corollary}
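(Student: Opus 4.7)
The proof will be short because all the heavy lifting has already been done in \cref{Th:GradForm} and by the classical first-order necessary optimality condition for minimization over a convex admissible set. The plan is to combine these two ingredients and then simply insert the gradient formula \eqref{GradRedGradCont} into the abstract variational inequality.

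\textbf{Step 1: Apply the abstract first-order condition.} Recall that $\Uad\subset \U$ is nonempty, convex and closed (cf. \cref{rem:Uad_nonemptyConvexClosed}). Under the hypotheses of \cref{Th:GradForm}, the reduced cost functional $\hat J\colon\Uad\to\Rb$ is Fr\'echet differentiable at $\bar u$; this follows from \cref{Le:eFrDer}, \cref{Le:JFrDer}, \cref{Co:Bijectivity} and the chain rule (see also \eqref{OptCond-3}). Since $\bar u\in\Uad$ is a local minimizer of $\hat J$ and $\Uad$ is convex, the standard necessary optimality condition (see \cite[Theorem~1.46]{HPUU08}) yields the variational inequality
\begin{align*}
{\langle \nabla\hat J(\bar u),u-\bar u\rangle}_\U\ge 0\quad\text{for all }u\in\Uad.
\end{align*}

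\textbf{Step 2: Insert the explicit gradient formula.} By \cref{Th:GradForm} applied at $\bar u\in\Uad$, the Riesz representant of $\hat J'(\bar u)$ in $\U$ is given by
\begin{align*}
\nabla\hat J(\bar u)=\kappa\bar u-\Eb\Big[a_u(\bXb(\cdot),\bar u(\cdot),\cdot)^\top\bm{\bar\Lambda}+\sum_{j=1}^m b_{ju}(\bXb(\cdot),\bar u(\cdot),\cdot)^\top\bm{\bar Z}_j\Big],
\end{align*}
where $\bXb=\Sm(\bar u)$ and $(\bm{\bar\Lambda},\bm{\bar Z})\in\X\times\Lb^2_\Fs(\Rb^{d\times m})$ is the unique solution to the adjoint BSDE \eqref{OptCond-12} evaluated at $(\bX,u)=(\bXb,\bar u)$. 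Substituting this expression into the variational inequality from Step~1 yields the claimed inequality
\begin{align*}
\bigg\langle\kappa \bar u+\Eb \Big[ -a_u(\bXb,\bar u,\cdot)^\top \bm{\bar\Lambda}-\sum_{j=1}^m b_{ju}(\bXb,\bar u,\cdot)^\top \bm{\bar Z}_j
\Big],u-\bar u\bigg\rangle_{\U}\ge0
\end{align*}
for every $u\in\Uad$.

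\textbf{Main obstacle.} Strictly speaking, there is no obstacle left at this stage: the differentiability of $\hat J$, the invertibility of $e_\bX(\mathcal S(u),u)$ needed to define $\mathcal S'(u)$, and the solvability of the adjoint equation have all been established before \cref{Th:GradForm}, which itself provides the gradient in Riesz form. The only point deserving a brief sanity check is that the constraint qualification holds so that the abstract multiplier rule applies; this was observed in the remark following \cref{Co:Bijectivity}, where the surjectivity of $e'(\bX,u)$ is deduced from the bijectivity of $e_\bX(\bX,u)$. Thus, the corollary is a direct consequence of \cref{Th:GradForm} and \cite[Theorem~1.46]{HPUU08}.
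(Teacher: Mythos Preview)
Your proof is correct and follows exactly the approach indicated in the paper, which simply states that the corollary follows directly from \cref{Th:GradForm} and \cite[Theorem~1.46]{HPUU08}. You have merely spelled out the two steps (abstract variational inequality on a convex set, then substitution of the gradient formula) that the paper leaves implicit.
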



\section{Numerical experiments}
\label{sec:Numerical_experiments}

In this section, we explain our optimization strategy. Afterwards, we perform numerical experiments. In the first one, we find an optimal control such that the mean and variance of the governing process follow precisely the prescribed data. The governing process in this case is the well-known mean-reverting Ornstein-Uhlenbeck process. 
In the second one, we calibrate a model consisting of systems of SDEs by finding optimal parameters. 
The specific model is the so-called Stochastic Prandtl-Tomlinson (SPT) model that is used to study microrheological processes of viscous fluids.

Let us mention that in both examples our cost functionals are not of the specific form introduced in \cref{Ex:Cost}.

\paragraph*{Optimization strategy}

Exploiting the discrete optimality system and using the Euler-Maruyama (EM) scheme \cite{Higham2001IntroNumSDE,Maruyama1955SDE}, we construct a stochastic gradient method that is summarized in \cref{alg:PSGD_SOCP}. During the process of solving the model and adjoint SDE, we utilize the parallelization technique of multi-threading in order to speed up the calculations. 
This is possible since each realization is independent of the other ones. While updating the control, we apply a projection onto $\Uad$ or $\UadCalib$ utilizing the standard projection operator $\mathcal{P}_{\Uad}$ that is based on a componentwise projection.

Hence, we apply a stochastic gradient method using fixed batch sizes $M^\ell$.  Convergence properties can be found in \cite{BottouCurtis2018ReviewStochOpt}. 
In our examples, we use with some initial $s_0 > 0$ the stepsize rule
\begin{align}
	\label{eq:stepsize_rule}
	s_\ell=s_0/\ell
	\quad\text{with }\ell\in\Nb
\end{align}
for the stochastic gradient method.

\begin{algorithm}
	\caption{Stochastic Gradient Method for Stochastic Optimal Control}\label{alg:PSGD_SOCP}
	\begin{algorithmic}[1]
		\REQUIRE Initial discrete control guess $u^0 = (u^0_1,\ldots,u^0_r)^\top$ in case of $\U=[\ua,\ub]$ (calibration problem) or $\bU^{h,0}=[\mathrm u_0^0|\ldots|\mathrm u_{N-1}^0]$ in case of $\U=L^2(0,T;\Rb^r)$ (time-dependent controls), tolerance $tol>0$, 
		maximum iteration depth $\ell_{\max} \in \Nb$;
		\STATE Set $\ell = 0$, initialize $E \gg tol$;
		\WHILE{$E>tol$ \AND $\ell<\ell_{\max}$}
		\STATE Generate $M^{\ell} \in \Nb$ random numbers (call the set of the random numbers $\Ms^{\ell}\in \Nb^{M^{\ell}}$);
		\STATE Solve the discretized state model using the EM scheme;
		\STATE Solve the associated adjoint model using the EM scheme;
		\STATE Assemble the gradient $\nabla\hat J^h(u^\ell)$ and $\nabla\hat J^h(\bU^{h,\ell})$, respectively, of the reduced cost and compute $E=\|\nabla\hat J^h(u^{\ell})\|_\Uc$ and $E=\|\nabla\hat J^h(\bU^{h,\ell})\|_{\U^h}$, respectively;
		\STATE Calculate stepsize $s_\ell$ using \eqref{eq:stepsize_rule};
		\STATE Determine a new control $u^{\ell+1}$ and $\bU^{h,\ell+1}$, respectively, by applying a projected stochastic gradient step;
		\STATE Set $\ell = \ell+1$;
		\ENDWHILE
		\RETURN $u^\ell$ and $\bU^{h,\ell}$, respectively.
	\end{algorithmic}
\end{algorithm}

\begin{remark}
	The random numbers $\Ms^{\ell}$ in \cref{alg:PSGD_SOCP} change in every optimization iteration, but stay the same within an iteration.
\end{remark}

\subsection{Time-dependent Ornstein-Uhlenbeck process}
\label{sec:OU}

In the context of \eqref{SDE} we have $d=1$, $m=1$, $r=2$ and therefore $\U\colonequals L^2(0,T;\Rb^2)$. 
For a fixed parameter $\theta>0$, the stochastic process $X(t)\in L^2(\Omega;\Rb)$ solves the linear SDE
\begin{align}
	\mathrm dX(t)=\theta\big(u_1(t) - X(t)\big)\,\mathrm dt+u_2(t)\,\mathrm dB(t)\text{ for }t\in(0,T],\quad X(0)=X_\circ
	\label{eq:OU_SDE}
\end{align}
with an initial condition $X_\circ\in L^2(\Omega)$ and the time-dependent coefficients $u=(u_1,u_2)\in\U$. 
We set
\begin{align*}
	a(x,u,t)=\theta\big(u_1 - x\big),\quad
	b(x,u,t)=u_2\quad\text{for }(x,u,t)\in\Rb\times\Rb^r\times[0,T]\text{ and }u=(u_1,u_2).
\end{align*}
Then, \eqref{eq:OU_SDE} can be expressed in the form \eqref{SDE}. 
Recall that for $X\in L^2(\Omega)$ the variance is defined as
\begin{align*}
	\Vb[X]\colonequals\Eb\big[X-\Eb[X]^2\big]
	=\int_\Omega\big(X-\Eb[X]\big)^2\,\mathrm d\Pb
	=\int_\Omega\big(X(\omega)-\Eb[X]\big)^2\,\mathrm d\Pb(\omega).
\end{align*}
Now we introduce the cost functional as
\begin{align}
	\label{eq:OU_problem_functional}
	\begin{aligned}
		J(X,u)&\colonequals \frac{1}{2}\,\big\|\Eb[X(\cdot)]-\ed(\cdot)\big\|_{L^2(0,T)}^2+\frac{1}{2} \,\big\|\mathbb{V}[X(\cdot)]-\sd(\cdot)\big\|_{L^2(0,T)}^2
		\\
		&\quad~+\frac{1}{2}\,\big|\Eb[X(T)]-\edT\big|^2+ \frac{1}{2}\,\big|\mathbb{V}[X(T)]-\sdT\big|^2+\frac{\kappa}{2}\,{\|u\|}_\U^2
	\end{aligned}    
\end{align}
with
\begin{align*}
	\ed(t)=\sin\Big(\frac{2\pi t}{T}\Big)-1,\quad\sd(t)=0.2\bigg(\cos\Big(\frac{2\pi t}{T}\Big) + 2\bigg),\quad\edT=\ed(T),\quad\sdT=\sd(T)
\end{align*}
for $t\in[0,T]$. Setting for $t\in[0,T]$ and $X\in\X$
\begin{align*}
	j(X)&=\frac{1}{2}\,\big\|\Eb[X(\cdot)]-\ed\big\|_{L^2(0,T)}^2+\frac{1}{2}\,\big\|\Vb[X(\cdot)]-\sd\big\|^2_{L^2(0,T)},\\
	j_T(X)&=\frac{1}{2}\,\big\|\Eb[X(T)]-\edT\big\|_{L^2(0,T)}^2+\frac{1}{2}\,\big\|\Vb[X(T)]-\sdT\big\|^2_{L^2(0,T)}
\end{align*}
the cost \eqref{eq:OU_problem_functional} and our optimization problem can be expressed as \eqref{GeneralCost} and  \eqref{eq:SDE_Opti_OU}, respectively.

\begin{remark}
	\label{Rem:OU_Ass}
	\begin{enumerate}[label={\em \arabic*)}]
		\item Note that the coefficient functions $a$ and $b$ are independent of $t$ and affine linear in $x$ and $u$. 
		In particular, both coefficients are Lipschitz continuous in $x$ and $u$. Moreover, $a$ and $b$ are time-independent. 
		Furthermore, \eqref{Assumption:coefficients_integral} holds for $u\in\U$. Hence, \cref{Assumption:coefficients,A:LinSDE} are fulfilled.
		\item For the cost functional it holds that it is continuous as a composition of continuous functions and moreover bounded from below by zero. Furthermore, the parts of the functional only containing the expected value are convex as a composition of non-decreasing functions and a convex function. 
		However, since the variance is not a convex function, the parts with the variance are not convex. 
		Consequently, \cref{Assumption:continuity_C} is only partially satisfied.
		\item Note
		that $a$ and $b$ are continuously differentiable and hence \cref{Assumption:Diffbarkeit}-\ref{AssumptionState} is fulfilled. Furthermore, $a_x(x,u(t),t)=-\theta$ and $b_x(x,u(t),t)=0$ hold for $(x,u)\in\Rb\times\U$ and $t\in[0,T]$. Thus, \eqref{Eq:A:5.3-3} is clearly valid by choosing $L_\mathfrak{ab}=\theta$ and $L_\mathfrak{ab}=0$, respectively. 
		Thus, \cref{A:5.3} is fulfilled.
	\end{enumerate}
\end{remark}

Utilizing the notation introduced in \cref{sec:D6.2}, it turns out that for $\mu=1,\ldots,M$ and $\nu=0,\ldots,N-1$, the first-order necessary optimality system of the discrete optimization problem is given with $\mu = 1,\ldots,M$ by
\begin{subequations}
	\label{eq:SDE_opti_OptSys_OU}
	\begin{align}
		&\left\{
		\begin{aligned}
			X^\mu_{\nu+1}&=X^\mu_\nu+\big( \theta(\bU^h_{1\nu}-X^\mu_\nu)\big)\,\Delta t + \bU^h_{2\nu}\,\Delta B^\mu_\nu,
            \qquad\qquad \nu = 0,\ldots,N-1
            \\
			X^\mu_0&=X_\circ^\mu, 
		\end{aligned}
		\right.
        \label{eq:opt_sys_forward}
        \\
		&\left\{
		\begin{aligned}
			\Lambda^\mu_\nu&=\Lambda_{\nu+1}^\mu-\theta\Lambda_{\nu+1}^\mu\,\Delta t+ \Delta t\left( \Eb^M\big[X_\nu^M\big] - \eta^\mathrm{d}_\nu\right)
			\\
			&\quad+2\Delta t \left(\Vb^M\big[X_\nu^M\big]-\sigma_\nu^\mathsf d\right)\left( X^\mu_\nu-\Eb^M\big[X_\nu^M\big]\right),
             \qquad\qquad \nu = N-1,\ldots,0, 
			\\
			\Lambda^\mu_N&=\left( \Eb^M\big[X_N^h\big] - \edT\right)+
           2\left(\Vb^M\big[X_N^h\big]-\sigma_T^\mathsf d\right)\left( X^\mu_N-\Eb^M\big[X_N^h\big]\right),
		\end{aligned}
		\right.
        \label{eq:opt_sys_backward}
        \\
		\label{eq:SDE_opti_OptSys_OU_Gradient}
		&\nabla \hat{J}^h_\nu(\mathrm u)
		\colonequals\kappa \,  \mathrm{u}_\nu - \frac{1}{M}\sum_{\mu=1}^{M}\left( \binom{\theta}{0}\Delta t+\binom{0}{1}\Delta B^\mu_\nu \right)\Lambda_{\nu+1}^\mu\quad\text{for }\nu=0,\ldots,N-1   
	\end{align}
\end{subequations}
with
\begin{align*}
	X^h&=\big\{X_\nu^\mu\in\Rb\,\big|\,\mu=1,\ldots,M,~\nu=0,\ldots,N\big\},\\
	\bU^h&=\big[\mathrm u_0|\ldots|\mathrm u_{N-1}\big]\in\Rb^{2\times N},\quad\mathrm u_\nu=\big(\bU^h_{1\nu},\bU^h_{2\nu}\big)^\top\in\Rb^2~(\nu=0,\ldots,N-1),\\
	\Lambda^h&=\big\{\Lambda_\nu^\mu\in\Rb\,\big|\,\mu=1,\ldots,M,~\nu=0,\ldots,N\big\}.
\end{align*}

In the sequel, we set the regularization parameter to zero, i.e. $\kappa=0$. Notice that \eqref{eq:SDE_opti_OptSys_OU_Gradient} can in this case be written as
\begin{align}
	\nabla \hat{J}^h_\nu(\mathrm u)= \Eb^M\left[ \binom{\theta \Delta t \, \Lambda_{\nu+1}^M}{\Delta B_\nu \, \Lambda^M_{\nu+1}} \right] = \binom{\theta\Delta t \, \Eb^M[\Lambda_{\nu+1}^M]}
	{\Eb^M[\Delta B_\nu \Lambda^M_{\nu+1}]}.
	\label{eq:gradient_EE}
\end{align}
Recall that $\Eb[\Delta B_{\nu+1}]=0$. 
However, since we used the $\Delta B^\mu_\nu$ to derive $X_{\nu+1}$ and with this $\Lambda_{\nu+1}$, it holds that $\Lambda_{\nu+1}$ is not independent of $\Delta B_\nu$. 
Therefore, the second component of the gradient in \eqref{eq:gradient_EE} is non-zero.

\paragraph{Numerical results}

In the next figures, we present the results of our optimization strategy for the Ornstein-Uhlenbeck process \eqref{eq:OU_SDE}. 
For this example, we choose the initial condition $X_\circ$ to obey the normal distribution with mean $\eta^\mathsf d(0)$ and standard deviation $\sigma^\mathsf d(0)$.

Since \eqref{eq:OU_SDE} is a linear SDE, the trajectory of mean and variance can be calculated as (see, e.g., \cite[Chapter 3, Theorem  3.2 and Example 5.2]{Mao2008SDE})
\begin{align}
	\begin{split}
		\Eb\big[X(t)\big]
		&=e^{-\theta t}\Eb[X_\circ] + e^{-\theta t}\theta \int_0^s u_1(s)e^{\theta s}\, \mathrm ds,
		\\
		\mathbb{V}\big[X(t)\big] 
		&= e^{-2\theta t}\mathbb{V}\big[X_\circ\big]+ 
		e^{-2\theta t}\int_0^t u_2^2(s)e^{2\theta s}\,\mathrm ds.
	\end{split}
	\label{eq:mean_var_OU}
\end{align}
Hence, we can derive the best controls $u$ in the sense that they will lead to a perfect tracking of the desired trajectories of mean and variance: 
\begin{align}
	\label{eq:perfect_controls_OU}
	u_1^*(t)=(\eta^\mathsf d)'(t)\frac{1}{\theta}+\eta^\mathsf d(t)
	\quad\text{and}\quad (u_2^*)^2(t)=2\theta(\sigma^\mathsf d)(t) + \sqrt{\sigma^\mathsf d(t)}(\sqrt{\sigma^\mathsf d})'(t).
\end{align}

The success of the method given in \cref{alg:PSGD_SOCP} is evident in \cref{fig:Results_OU}. 
The method manages to find controls $u_1$ and $u_2$ such that the trajectory of the mean value and the variance of the solution of the model equation follows the desired one.
In \cref{fig:Moments_trajectories_result}, the trajectories of all $M$ trials are plotted together with the calculated and desired moments.
In \cref{fig:Controls}, the control that is calculated with our optimization method is plotted together with the theoretical control $(u_1^*(t),u_2^*(t))$ that leads to a perfect tracking.
One obtains a very good agreement for $u_1(t)$. 
For $u_2(t)$ there is higher noise.
However, for more iterations of the stochastic gradient method, the noise gets smaller.
Finally, in \cref{fig:OU-convergence_history}, we plot the convergence history of the (relative) functional and the relative norm of the gradient over the optimization iterations $\ell$.
and the analogous holds for the relative gradient.
\begin{figure}
	\begin{subfigure}[l]{0.32\textwidth}
		\includegraphics[width=\textwidth]{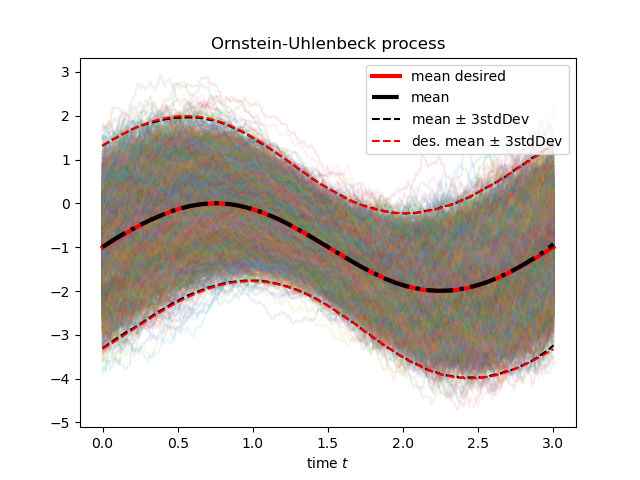}
		\caption{}
		\label{fig:Moments_trajectories_result}
	\end{subfigure}
	\begin{subfigure}[l]{0.32\textwidth}
		\includegraphics[width=\textwidth]{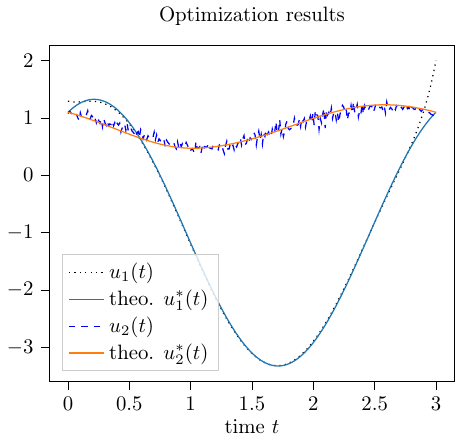}
		\caption{}
		\label{fig:Controls}
	\end{subfigure}
	\begin{subfigure}[l]{0.35\textwidth}
		\includegraphics[width=\textwidth]{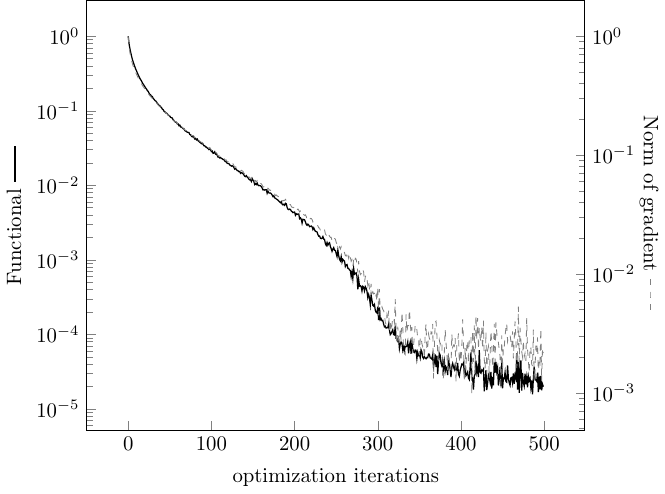}
		\caption{}
		\label{fig:OU-convergence_history}
	\end{subfigure}
	\caption{
		Results for the example of \cref{sec:OU}.
		(a) Trajectories of all trials (grayed out), desired moments (red) calculated moments (black) while applying the control calculated by Algorithm \ref{alg:PSGD_SOCP};
		(b) Calculated controls $u$ and $u$ together with controls $u_1^*$ and $u_2^*$ defined in \eqref{eq:perfect_controls_OU} for a perfect tracking;
		(c) Convergence history of relative functional (solid) and Euclidean norm of the relative gradient (dashed) over the optimization iterations (cf. \eqref{eq:relative_functional}).}
	\label{fig:Results_OU}
\end{figure} 
More specifically, the relative functional is given by
\begin{align}
	\hat{J}^h(\bU^{h,\ell})/|\hat{J}^h(\bU^{h,0})|,
	\label{eq:relative_functional}
\end{align}

\subsection{Stochastic Prandtl-Tomlinson (SPT) model}
\label{sec:SPT_example}

The SPT model is a non-equilibrium 
bath model in which a tracer particle, 
also called colloidal, is assumed to be immersed in a suspension of micelles; see, e.g., \cite{Jain2021twoStepRheolog,Jain2021microStochPrandtlTomlison,Muller2020NonLinBath} and the references therein and see \cite{PopovGray2014Prandtl,Prandtl1928gedankenmodell} for the origins of this models in the deterministic case.
In this setting, the internal forces between the particles lead to non-Markovian behavior of the movement of the colloidal due to the memory of the system. The goal is to study the properties of the suspension by tracing the colloidal.

To model the complex fluid, the colloidal particle is investigated with a coupling to one or more bath particles that describe the background. The system of differential equations describing the position of the tracer and $K$ bath particles is given by
\begin{align}
	\begin{aligned}
		\mathrm d X_1(t)
		&=\frac{1}{\gamma_1}\bigg(-\partial_x V_{\mathrm{ext}}(X_1(t),t)-\sum_{k=2}^{K+1} V_{\mathrm{int},k}'\big(X_1(t)-X_k(t)\big)\bigg)\,\mathrm dt+\frac{1}{\gamma_1}\,\mathrm dB_1(t),\\
		\mathrm d X_k(t)&=\frac{1}{\gamma_k}\,V_{\mathrm{int},k}'\big(X_1(t)-X_k(t)\big)+\frac{1}{\gamma_k}\,\mathrm dB_k(t)
		\quad
		\text{for } k=2,\ldots,K+1,
	\end{aligned}		
	\label{eq:Stochastic_Prantl_Tomlinson_intro}
\end{align}
for $t\in(0,T]$, where for $(x,t)\in\mathbb R\times[0,T]$
\begin{align*}
	V_{\mathrm{ext}}(x,t)=\frac{\kappa_{\mathrm{ext}}}{2}\,\big(x-v_0 t\big)^2
	\qquad\text{and}\qquad
	V_{\mathrm{int},k}(x)=V_{0,k}\cos\bigg( \frac{2\pi x}{d_k}\bigg)\text{ for } k=2,\ldots,K+1.
\end{align*}
The system \eqref{eq:Stochastic_Prantl_Tomlinson_intro} is completed with the initial condition $X_i(0) = X^{eq}_{i,\circ}$, $i=1,\ldots,K+1$, where $X^{eq}_{i,\circ} \in L^2(\Omega;\Rb)$ is the equilibrium distribution of the corresponding particle given the external potential $V_{\mathrm{ext}}(X,0)$. In system \eqref{eq:Stochastic_Prantl_Tomlinson_intro}, the variable $X_1$ denotes the position of the tracer particle and the $X_k$'s the position of the bath particle $p_k$, $k=2,\ldots,K+1$. The $\gamma_i>0$, $i=1,\ldots,K+1$, are the friction coefficients of the corresponding particle and are assumed to be known.
The external potential $V_{\mathrm{ext}}$ may be applied using optical traps (laser beams) of strength $\kappa_{\mathrm{ext}}>0$ to realize harmonic potentials. 
This potential has a trapping effect, which means that the particle cannot leave the potential easily. 
This trap is then pulled with a constant velocity $v_0$ through the surrounding medium. 
The internal forces between the tracer and the bath particles are denoted by $V_{\mathrm{int},k}$ for $k=2,\ldots,K+1$. 
The force
\begin{align*}
	\text{``$\xi_k(t)=\dot B_k(t)$''} \quad\text{for }t\in[0,T]\text{ and }k=1,\ldots,K+1
\end{align*}
resembles a random force for which the following mean value relation and fluctuation-dissipation theorem must hold
\begin{align}
	\Eb\big[\xi_i(t)\big] = 0,\quad\Eb\big[\xi_i(t)\xi_j(t')\big] = 2k_B\mathrm{T}\gamma_i\delta_{ij}\delta(t-t')
	\quad
	\text{for }i,j=1,\ldots,K+1,
	\label{eq:random_force}
\end{align}
where $k_B>0$ is Boltzmann's constant and $\mathrm{T}>0$ the temperature of the bath.
The requirements \eqref{eq:random_force} are fulfilled by Brownian motion, hence we can apply our setting above.

\begin{remark}
	\label{rem:initial_condition_equilibration}
	Notice that in \cite{Jain2021twoStepRheolog,Jain2021microStochPrandtlTomlison}, the authors did not specify any initial condition within the SPT model \eqref{eq:Stochastic_Prantl_Tomlinson_intro}. 
	The reason for this is that the system needs to be in equilibrium.
	To realize this numerically, we need to integrate over the transient phase in order to generate sensible data that will be used to calculate the simulation data.
	To generate $\bX^{eq}_{\circ}$, any initial condition can be used and the model needs to evolve for a certain equilibration time $t_{eq}>0$. 
	Then, we use the state of the system after $t_{eq}$ as $\bX^{eq}_{\circ}$.
	In order to account for a possible high equilibration time, one needs to resort to parallelization techniques such as multi-threading that we use in our simulations.
\end{remark}

The goal is to identify the pairs $\{(V_{0,k},d_k)\}_{k=2}^{K+1}\subset \Rb^2$ for each bath particle $p_k$, $k=2,\ldots,K+1$, within \eqref{eq:Stochastic_Prantl_Tomlinson_intro}, such that experimental data is matched as good as possible. For $r=2K$ we define the parameter vector $u=[u_1,\ldots,u_{2K}]^\top\in\Uc\colonequals\Rb^r$ with
\begin{align*}
	u_{2k-3}=V_{0,k}\quad\text{and}\quad u_{2k-2}=\frac{1}{d_k}\quad\text{for }k=2,\ldots,K+1.
\end{align*}
It follows that
\begin{align*}
	V_{\mathrm{int},k}(x)&=u_{2k-3}\cos\big(2\pi u_{2k-2}x\big)&&\text{for } k=2,\ldots,K+1,\\
	V_{\mathrm{int},k}'(x)&=-2\pi u_{2k-3}u_{2k-2}\sin\big(2\pi u_{2k-2}x\big)&&\text{for } k=2,\ldots,K+1.
\end{align*}
We suppose that $\ua\in\Uc$ satisfies $\ua>0$ component-wise in $\Rb^{2K}$. Further, the state variable consists of the tracer particle $X_1$ and the bath particles $X_2,\ldots,X_{K+1}$. 
Hence, for $d=K+1$ and $t\in[0,T]$ we define the following state vector
\begin{align*}
	\bX(t)\colonequals\big(X_1(t),\ldots,X_d(t)\big)^\top\in\Rb^d.
\end{align*}
The SDE for $\bX$ is given by \eqref{SDE}, where the coefficient functions $a$ and $b$ are given for every $(x,u,t)\in\Rb^d\times\Rb^r\times[0,T]$ as
\begin{subequations}
	\label{eq:coefficients_SPT_ab}
	\begin{align}
		\label{eq:coefficients_SPT_a}
		\begin{aligned}
			a_1(x,u,t)&=\frac{1}{\gamma_1}\bigg(-\partial_x V_{\mathrm{ext}}(x_1,t)-\sum_{k=2}^d V_{\mathrm{int},k}'(x_1-x_k)\bigg)
			\\
			&=\frac{1}{\gamma_1}\bigg(-\kappa_\mathrm{ext}(x_1-v_0 t)+2\pi\sum_{k=2}^du_{2k-2}u_{2k-3}\sin \big(2\pi u_{2k-2}(x_1-x_k)\big)\bigg)\\
			a_i(x,u,t)&=\frac{1}{\gamma_i}\,V_{\mathrm{int},i}'(x_1-x_i)=-\frac{2\pi}{\gamma_i}\,u_{2i-2}u_{2i-3}\sin\big(2\pi u_{2i-2}(x_1-x_i)\big)
		\end{aligned}
	\end{align}
	for $i=2,\ldots,d$ and
	\begin{align}
		\label{eq:coefficients_SPT_b}
		b_i(x,u,t)&=1/\gamma_i,
		\qquad\qquad
		i=1,\ldots,d.
	\end{align}
\end{subequations}
Notice that the $b_i$'s are constant and therefore independent of $(x,u,t)$. 
Moreover, we have that
\begin{align}
	a_x(x,u,t)=\left(\left(\partial_{x_j}a_i(x,u,t)\right)\right)\in\Rb^{d\times d}
	\label{eq:jacobian_a_x}
\end{align}
with
\begin{align*}
	\begin{split}
		\partial_{x_1}a_1(x,u,t)&=-\frac{\kappa_\mathrm{ext}}{\gamma_1}+\frac{4\pi^2}{\gamma_1}\sum_{k=2}^{K+1}u_{2k-2}^2u_{2k-3}\cos \big(2\pi u_{2k-2}(x_1-x_k)\big),
		\\
		\partial_{x_j}a_1(x,u,t)&=-\frac{4\pi^2}{\gamma_1}\, u_{2k-2}^2u_{2k-3}\cos \big(2\pi u_{2k-2}(x_1-x_j)\big),\quad j=2,\ldots,K+1,
		\\
		\partial_{x_1}a_i(x,u,t)&=-\frac{4\pi^2 }{\gamma_i}\,u_{2i-2}^2u_{2i-3}\cos\big(2\pi u_{2i-2}(x_1-x_i)\big),\quad i=2,\ldots,K+1,
		\\
		\partial_{x_i}a_i(x,u,t)&=\frac{4\pi^2}{\gamma_i}\,u_{2i-2}^2u_{2i-3}\cos\big(2\pi u_{2i-2}(x_1-x_i)\big),\quad i=2,\ldots,K+1.
	\end{split}
	\label{eq:SPT_derivative_x_a}
\end{align*}
We also find for
\begin{align*}
	a_u(x,u,t)=\left(\left(\partial_{u_j}a_i(x,u,t)\right)\right)\in\Rb^{d\times r},
\end{align*}
with the following components.
For the right-hand side $a_1$ corresponding to the tracer particle, we calculate
\begin{align}
	\begin{split}
		\partial_{u_{2k-3}} a_1(x,u,t) &= \frac{2\pi}{\gamma_1} u_{2k-2} \sin\big(2\pi u_{2k-2}(x_1-x_k) \big),\\
		\partial_{u_{2k-2}} a_1(x,u,t) &=\frac{2\pi}{\gamma_1}
		\bigg(u_{2k-3}\sin \big(2\pi u_{2k-2}(x_1-x_k)\big) 
		\\
		&\hspace{12mm}
		+ 2\pi u_{2k-3} u_{2k-2} (x_1-x_k)\cos\big(2\pi u_{2k-2}(x_1-x_k)\big)\bigg).
	\end{split}
	\label{eq:SPT_derivative_u_a1}
\end{align}
For the other right-hand sides $a_i$, $i=2,\ldots,d$, we calculate
\begin{align}
	\begin{split}
		\partial_{u_{2i-3}} a_i &= - \frac{2\pi}{\gamma_i} u_{2i-2} \sin \big( 2\pi u_{2i-2} (x_1-x_i) \big), 
		\\
		\partial_{u_{2i-2}} a_i &= - \frac{2\pi}{\gamma_i} \Big( 
		u_{2i-3}\sin\big(2\pi u_{2i-2}(x_1-x_i) \big) 
		\\
		&\qquad\qquad    
		+ 2\pi u_{2i-2} u_{2i-3} (x_1-x_i) \sin\big( 2\pi u_{2i-2}(x_1-x_i) \big)
		\Big).
	\end{split}
	\label{eq:SPT_derivative_u_ai}
\end{align}
All other components vanish, i.e. $\partial_{u_j} a_i = 0$ if $i\neq j$ for $j = 1,\ldots,d$, $i=2,\ldots,d $ since the position of each particle depends not on parameters of the other particles. Hence, $\partial_u a(x,u,t)$ has the structure
\begin{align*}
	\begin{pmatrix}
		\partial_{u_1} a_0 & \partial_{u_2} a_0 & \hdots & \hdots & \hdots & \hdots & \partial_{u_{2K}} a_0 \\
		\partial_{u_1} a_1 & 0 & 0  & \partial_{u_K} a_1  &0 &\hdots &0  \\
		0& \ddots & 0  & 0 &\ddots  & 0 & \vdots \\
		\vdots& 0 & \ddots  & 0  &0 &\ddots &0 \\
		0& \hdots  & 0  & \partial_{u_K} a_K  & 0& 0&\partial_{u_{2K}} a_\Npart &
	\end{pmatrix}.
\end{align*}

Notice that $\partial_u b =0$ and $\partial_x b =0$. We define the following continuous problem
\begin{subequations}
	\begin{align}
		&\min J(\bm X, u)=
		\frac{1}{2}\standardNorm{[\Cm(\bX)](\cdot) \,-\, \cd(\cdot)}_ {L^2(0,T)}^2+\frac{1}{2}\standardNorm{[\Cm(\bX)](T) \,-\, \cd(T)}_2^2,\\
		&\hspace{0.5mm}\text{s.t. }(\bm X,u) \in \X \times \Rb^{2\Npart}\text{ solves \eqref{eq:Stochastic_Prantl_Tomlinson_intro}},
	\end{align}
	\label{eq:MCD_Calibration}
\end{subequations}
where we have set
\begin{align}
	\label{eq:def_Xu}
	[\Cm(\bX)](t)\colonequals\frac{\Eb[Y_1(t)Y_1(0)]}{\Eb[Y_1^2(t)]}
	\quad\text{and}\quad 
	Y_1(t)=X_1(t)-\Eb[X_1(t)], 
\end{align}
The term $[\Cm(\bX)](t)$ stands for the (normalized) correlation function and is connected to the \emph{mean conditional displacement} that is used in \cite{Jain2021microStochPrandtlTomlison}.
Notice that it holds for all $t$ that 
\begin{align*}
	[\Cm(\bm X)](t) = \frac{\mathrm{Cov}(Y_1(t),Y_1(0))}{\Vb[Y_1(t)]},
\end{align*}
since $\Eb[Y_1(t)]=0$ by the definition of $Y_1$.

\begin{remark}
	\begin{enumerate}
		\item [\em 1)] The coefficients functions $a$ and $b$ are given by \eqref{eq:coefficients_SPT_ab}. Hence, they consist of functions that are either affine linear in $x$ and $u$ or smooth and bounded and therefore fulfill the Lipschitz property. Moreover, by linearity and boundedness, they also fulfill \eqref{Assumption:coefficients_integral}. Consequently, \cref{Assumption:coefficients} holds also for the SPT model.
		\item [\em 2)] Also the coefficients functions $a$ and $b$ are continuously differentiable functions as a composition of smooth functions. Their first partial derivatives are given by \eqref{eq:SPT_derivative_u_a1}, \eqref{eq:SPT_derivative_u_ai}, \eqref{eq:SPT_derivative_x_a}.
		Furthermore, the derivatives are bounded in $\bX$ due to the boundedness of the $\mathrm{sin}$ function and bounded in $u$ due to our assumptions on $\Uad$.
		Thus, \cref{Assumption:Diffbarkeit}-\ref{AssumptionState} and \cref{A:5.3} are fulfilled.
		\item [\em 3)] We do not have the convexity of $\mathcal{C}$ since it contains the variance. 
		But assuming that the variance of $X_u$ is not vanishing, we have continuous differentiability of the functional as a composition of continuous differentiable functions and hence that \cref{Assumption:Diffbarkeit}-\ref{AssumptionCost} is fulfilled. 
		Furthermore, the functional is also bounded from below by zero, since it consists of norms. Consequently, \cref{Assumption:continuity_C} is only partially satisfied.
	\end{enumerate}
\end{remark}

Now, we formulate the discretized model and the discretized adjoint model to the problem \eqref{eq:MCD_Calibration}.
For this, we define the discrete version of $\Cm$ as follows:
\begin{align*}
	\Cm_{\nu}^M(\bX^h) \colonequals\frac{\Eb^M[Y_{1,\nu}^hY_{1,0}^h]}{\Eb^M[(Y_{1,\nu}^h)^2]}
	\quad\text{for }\nu = 1,\ldots,N\text{ and }Y_{1,\nu}^h=X_{1,\nu}^h-\Eb^M\big[X_{1,\nu}^h\big].
\end{align*}
Recall the definition of $\Eb^M$ in \eqref{ExpValDisc}. Notice that the derivative of the functional with respect to $X_{1,\nu}^\mu$, $\nu \in \{1,\ldots,N\}$, $\mu \in \{1,\ldots,M\}$ is given by
\begin{align*}
	&\left(\Cm^M_\nu(\bX^h)-\cdIndex{\nu}\right)
	\bigg(\frac{1}{M^2}\sum_{\mu'=1}^M (Y^{\mu'}_{1,\nu})^2\,Y^\mu_{1,0} 
	-\frac{1}{M^2}\sum_{\mu'=1}^M 
	\big(Y^{\mu'}_{1,\nu}Y^{\mu'}_{1,0}\big)\,2Y^{\mu}_{1,\nu}\bigg)
	\bigg(\frac{1}{M}\sum_{\mu'=1}^M(Y^{\mu'}_{1,\nu})^2\bigg)^{-2} 
	\\
	&\quad=\frac{1}{M}\left(\Cm^M_\nu(\bX^h)-\cdIndex{\nu}\right) 
	\Eb^M\big[(Y^h_{1,\nu})^2\big]  
	\left(Y^\mu_{1,0}-2\,\Cm_\nu^M(\bX^h)Y^{\mu}_{1,\nu}
	\right)\big(\Eb^M\big[(Y^h_{1,\nu})^2\big]\Big)^{-2}\\
	&\quad=\frac{1}{M}\left(\Cm^M_\nu(\bX^h)-\cdIndex{\nu}\right)  
	\frac{Y^\mu_{1,0}-2\,\Cm_\nu^M(\bX^h)Y^{\mu}_{1,\nu}}{\Eb^M[(Y^h_{1,\nu})^2]}\\
\end{align*}
Thus, we can write the derivative of the functional with respect to the state variable $\bX^h = (X^h_i)_{i=1,\ldots,d}$ as
\begin{align}
	J^h_{\bX^h}(\bX^{\mu}_{\nu}) =
	\begin{cases}
		\frac{1}{M}\left(\Cm^M_\nu(\bX^h)-\cdIndex{\nu}\right)  
		\frac{Y^\mu_{1,0}-2\,\Cm_\nu^M(\bX^h)Y^{\mu}_{1,\nu}}{\Eb^M[(Y^h_{1,\nu})^2]}
		& \text{ if } i = 1,\\
		0 & \text{ otherwise}.
	\end{cases}
\end{align}

We introduce the adjoint variable $\bL^h=\{\bL^\mu_\nu\}\in\X^h$. Using \eqref{eq:adjoint_SDE_time} and \eqref{eq:jacobian_a_x}, we can formulate the discrete adjoint equation that evolves backwards in time for $\nu = N-1,\ldots,0$
\begin{subequations}
	\label{eq:SPT_discrete_adjoint}
	\begin{align}
		\bL^{\mu}_{\nu} = \left(I + \Delta t \, a_x^\nu(\bm X^\mu_{\nu},u) \right)^\top\bL^\mu_{\nu+1} +\Delta t \, J^h_{\bX^h}(\bX^\mu_\nu)\quad\text{for }\mu=1,\ldots,m
		\label{eq:AdjointEquation_SPT}
	\end{align}
	with the terminal condition
	\begin{align}
		\bL^\mu_{N}=J^h_{\bX^h}(\bX^\mu_N)\quad\text{for }\mu=1,\ldots,m.
	\end{align}
\end{subequations}
Furthermore, the discrete gradient is given by
\begin{align}
	\nabla \hat{J}^h(\mathrm u)= \frac{1}{M} \frac{1}{N-1}\sum_{\mu=1}^M \sum_{\nu=0}^{N-1}\left( a_u^\nu(\bm X^\mu_\nu,u) \right)^\top \bL_{\nu+1}^\mu.
	\label{eq:SPT_discrete_gradient}
\end{align}

For the first numerical experiment using the SPT model, we consider a single bath particle, i.e. we set $K=1$. 
In \cref{fig:SPT}, we present the results. 
The plot in \cref{fig:SPT_N1_merged} shows the simulation of \eqref{eq:Stochastic_Prantl_Tomlinson_intro} using the initial guess of the parameters and the simulation using the parameters obtained by Algorithm \ref{alg:PSGD_SOCP}.
\begin{figure}
	\begin{subfigure}[l]{0.4\textwidth}
		\includegraphics[width=\textwidth]{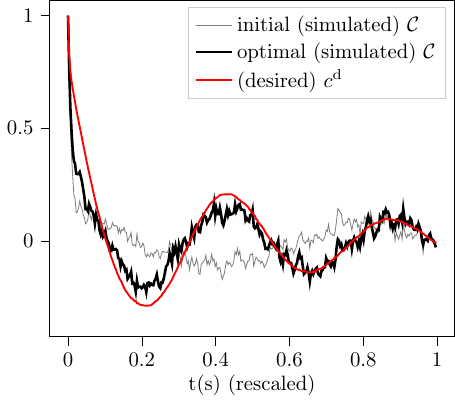}
		\caption{$K=1$}
		\label{fig:SPT_N1_merged}
	\end{subfigure}
	\hfill
	\begin{subfigure}[l]{0.4\textwidth}
		\includegraphics[width=\textwidth]{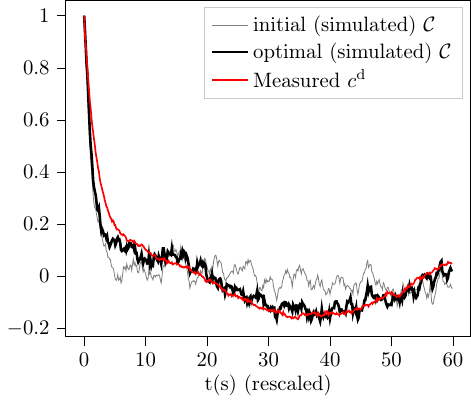}
		\caption{$K = 2$}
		\label{fig:SPT_N2_merged}
	\end{subfigure}		
	\caption{
		Results of numerical experiments for \eqref{eq:MCD_Calibration}. Simulation of $X_u$ defined in \eqref{eq:def_Xu} using the initial (grey) and optimal (black) guess of parameters for (a) $K=1$ and (b) $K=2$, each time compared with the desired $\cd$ (red).}
	\label{fig:SPT}
\end{figure}

For a second test, we consider \eqref{eq:MCD_Calibration} with two bath-particles, i.e. $K=2$. 
In this case, we observed that a longer calibration time is needed, also because the equilibration time $t_{eq}$ is longer (cf. \cref{rem:initial_condition_equilibration}). 
We plot the results in \cref{fig:SPT_N2_merged}.

It is evident, that our method manages to find optimal parameters in order to match the desired behavior of the model.


\section*{Acknowledgments}

We would like to express our gratitude to Juliana Caspers, Luis Reinalter as well as Clemens Bechinger and Matthias Kr\"uger for very helpful physical insights in the SPT model. We also thank Behzad Azmi for very fruitful discussions.
We would also like to express our gratitude to the anonymous referees for their helpful questions and remarks.

\section*{Funding}

This work was partially funded by the Deutsche Forschungsgemeinschaft within SFB 1432, Project-ID 425217212.

\section*{Competing interests}

The authors have no relevant financial or non-financial interests to disclose.

\section*{Author contributions}

All three authors contribute to the present manuscript in the same way.


\bibliographystyle{acm}

\end{document}